\providecommand{\U}[1]{\protect\rule{.1in}{.1in}}
\theoremstyle{plain}
\newtheorem{thm}{Theorem}[section]
\newtheorem{corollary}[thm]{Corollary}
\newtheorem{lemma}[thm]{Lemma}
\newtheorem{question}[thm]{Question}
\newtheorem{prop}[thm]{Proposition}
\theoremstyle{definition}
\newtheorem{rem}[thm]{Remark}
\newtheorem{deff}[thm]{Definition}
\newtheorem{example}[thm]{Example}
\DeclareMathOperator{\RR}{\mathbb{R}}
\def\N{{\mathbb N}}
\def\S{{\mathcal S}}
\def\I{{\mathrm I}}
\def\K{{\mathrm K}}
\def\Z{{\mathbb Z}}
\def\H{{\mathrm H}}
\def\C{{\mathcal C}}
\def \act{{G \curvearrowright X}}
\DeclareMathOperator{\const}{const}
\DeclareMathOperator{\mdim}{mdim}
\DeclareMathOperator{\mmdim}{mdim_{\Sigma, M}}
\DeclareMathOperator{\SC}{\S_{{c+}}}
\DeclareMathOperator{\h}{h}
\DeclareMathOperator{\leb}{Leb}
\DeclareMathOperator{\ord}{ord}
\DeclareMathOperator{\diam}{diam}
\DeclareMathOperator{\E}{E}
\DeclareMathOperator{\D_ms}{D^{\Sigma}_{md}}
\DeclareMathOperator{\ber}{b}
\DeclareMathOperator{\DD}{{\mathcal D}}
\DeclareMathOperator{\X_u}{X^{\Sigma}_{univ}}
\DeclareMathOperator{\R_u}{R^{\Sigma}_{univ}}
\newcommand{\Sym}{{\rm Sym}}
\newcommand{\Map}{{\rm Map}}
\newcommand{\shift}{{\rm shift}}
\newcommand{\Addresses}{{
  \bigskip

\hskip-\parindent F.~Garc\'ia-Ramos, \textsc{Physics Institute, Universidad Aut\'onoma de San Luis Potos\'i, M\'exico.\\
Faculty of Mathematics and Computer Science, Uniwersytet Jagielloński, Poland}\par\nopagebreak
  \textit{E-mail address}: \texttt{fgramos@conahcyt.mx}
  
 \medskip

\hskip-\parindent  Y.~Gutman, \textsc{Institute of Mathematics, 
    Polish Academy of Sciences, ul. Śniadeckich 8, 00-656 Warszawa,
    Poland.}\par\nopagebreak
  \textit{E-mail address}: \texttt{gutman@impan.pl}

}}
\title{Local mean
dimension theory for sofic group actions}
\author{Felipe García-Ramos and Yonatan Gutman}
\date{}
\begin{document}
\begin{abstract}

Using a local perspective, we introduce \textit{mean dimension pairs} and give sufficient conditions of when every non-trivial factor of a continuous group action of a sofic group $G$ has positive mean dimension. In addition we show that the mean dimension map is Borel, and that the set of subshifts with completely positive mean dimension of $[0,1]^G$, the full $G$-shift on the interval, is a complete coanalytic set in the set of all subshifts (hence not Borel). Our results are new even when the acting group is $\Z$.    
\end{abstract}
\date{\today}
\maketitle

 \section{Introduction}

 Mean dimension is an invariant of topological dynamical systems introduced by Gromov in  \cite{G}  and
studied and developed systematically by Lindenstrauss and Weiss in \cite{LW}. It has found many applications in various fields of mathematics such as 
topological dynamics (\cite{L99,LT14,Gut15Jaworski,gutman2017embedding,meyerovitch2019expansive,gutman2020embedding}), symbolic dynamics, (\cite{BDz2004,G11}), cellular automata (\cite{CSC10}), mathematical physics (\cite{mielke2002infinite,zelik2003attractors,efendiev2008finite,T2011,GutTakens,GQS18}), $C^\ast$-algebras (\cite{niu2014mean,HWZ15,elliott2017c,kerr2020almost}), information theory (\cite{lindenstrauss2018rate,gutman2018metric}) and complex analysis (\cite{G,G2008,matsuo2015brody}).

Starting from the fundamental work of Lindenstrauss and Weiss in \cite{LW}, exploring possible analogies and connections between the theory of mean dimension and the theory of entropy (both measurable and topological) has been a very fruitful idea. 
Heuristically, entropy measures the number of bits per time unit needed to describe a point in a system, whereas mean dimension measures the number of parameters per time unit. It is thus not surprising that the mean dimension of the $d$-cubical shift $(([0,1]^{d})^{\mathbb{Z}},\shift)$ is $d$ just as the entropy of the full shift on an alphabet with $a$ elements $\{0,1,\ldots, a-1\}^{\mathbb{Z}},\shift)$ is $\log a$. According to Krieger's generator theorem (\cite{k70}) an ergodic measure preserving systems with entropy $h<\infty$, has a generating partition with at most $e^{h}+1$ elements and this result is optimal. According to \cite{LT14, gutman2020embedding} a minimal topological dynamical system whose mean dimension is smaller than $\frac{N}{2}$ is embeddable in $(([0,1]^{N})^{\mathbb{Z}},\shift)$ and this result is optimal.

An important facet of the theory of entropy is the \textit{theory of local entropy} (see the surveys  \cite{glasner2009local,localsurvey}).
This theory was introduced by Blanchard \cite{blanchard1992fully, blanchard1993disjointness}, where his original motivation was to understand the topological analogs of $K$-systems, i.e., measure-preserving systems all of whose non-trivial factors have positive entropy. Specifically, he introduced the classes of \textit{completely positive entropy systems} (CPE) and \textit{uniform positive entropy systems} (UPE) as well as the related notion of \textit{entropy pairs}. 

The notion of entropy pairs and its variations have been used to further the study of many classes of dynamical systems such as null and tame systems \cite{kerr2009combinatorial}, $\infty$-step nilsystems  \cite{DDM13} and algebraic actions \cite{chung2015homoclinic,barbieri2022markovian}. Furthermore, local entropy theory has found applications in measurable dynamics (\cite{huang2006local,kerr2009combinatorial}),  functional analysis (\cite{kerr2009combinatorial,barbieri2022markovian}), topology \cite{darji2017chaos} and descriptive set theory \cite{darjilocal}. 

 We adopt the perspective of continuous actions of sofic groups. Following the breakthrough of Bowen in \cite{bowen2010measure}, Kerr and Li introduced topological sofic entropy in \cite{kerr2011entropy}, and Li sofic mean dimension in \cite{L12}, recovering several classical results. 
  
In this paper, we explore a theory analogous to the theory of local entropy theory in the context of mean dimension. We introduce the classes of \textit{completely positive mean dimension systems} (CPMD) and \textit{uniform positive mean dimension systems} (UPMD), as well as the concept of \textit{mean dimension pairs}. The theory we present exhibits intriguing parallels with the local theory of entropy, particularly in the realm of sofic group actions. This connection arises from the fact that, for sofic (non-amenable) group actions, entropy may strictly increase along factor maps (\cite[p. 222]{kerr2013combinatorial}), a phenomenon already observed under $\mathbb{Z}$-actions for mean dimension. We note that our contributions are new, even for actions of $\mathbb{Z}$.

\subsection*{Structure of the paper} In Section \ref{sec:2}, we recall basic notions from the theory of mean dimension and the theory of local entropy. In Section \ref{sec:cpmd} we introduce the classes of completely positive mean dimension systems (CPMD) and uniform positive mean dimension systems (UPMD). In Section \ref{sec:mdim_pairs}  we introduce and investigate the notion of mean dimension pairs and how it related to CPMD and UPMD. 
In Section \ref{sec:mdim_Borel} we introduce, for $G$ a sofic group and $A$ a compact metrizable space, $\S(A)$ the set of subactions of $A^G$, the full $G$-shift on $A$. We prove that the map which sends members of $\S(A)$, i.e.\ subactions of the full $G$-shift, to their mean dimension is a Borel map. In Section \ref{sec:descriptive_complexity} this last fact is used to prove that the set of subshifts with UPMD is Borel in the set of all subshifts. In addition, we show that the set of subsfhits of $[0,1]^G$ with CPMD is not Borel (in a very strong set), implying that it is a much more complicated property to check.

\subsection*{Acknowledgements} 
The authors are grateful to Hanfeng Li for pertinent comments and suggestions. 

Y.G. was partially supported by the National Science Centre (Poland) grant 2020/39/B/ST1/02329. F.G-R. was supported by the grant U1U/W16/NO/01.03 of the Strategic Excellence Initiative program of the Jagiellonian University.  

This work was partially supported by the Simons Foundation Award 663281 granted to the Institute of Mathematics of the Polish Academy of Sciences for the years 2021-2023.

The authors would like to thank the referee for comments that improved the readability of the paper.

\section{Preliminaries}\label{sec:2}

\subsection{Group actions}
For $n\in \N_+$, we define $[n]=\{1,...,n\}$. Throughout this paper, $X$ will always represent a \textbf{compact metrizable} space (with compatible metric $d$) and $G$ a countably infinite group with identity $e_G$. Denote the \textbf{diagonal} of $X\times X$ by $\Delta_X=\{(x,x)|\, x\in X\}$, and the family of non-empty finite subsets of $G$ by $\mathcal{F}(G)$.
\noindent
A (left) \textbf{action} of the group $G$ on $X$ will be represented by $\act$ or $(X,G)$. We will always assume that actions act by homeomorphisms of the topological space $X$.
The action of $g\in G$ on a point $x\in X$ will be written as $gx$. The open  $\epsilon$ ball around $x$ is denoted by $B_\epsilon(x)$. Its closure is denoted by $\overline{B}_\epsilon(x)$ (i.e.\ $\overline{B}_\epsilon(x)=\overline{B_\epsilon(x)}$).

\begin{deff}
Let $G \curvearrowright X$ and $G\curvearrowright Y$ be two actions. A function $\pi\colon X \to Y$ is \textbf{$G$-equivariant} if $g\pi(x)=\pi(gx)$ for every $g \in G$ and $x \in X$. In this situation, we write $\pi: \act \to G\curvearrowright Y$. If a $G$-equivariant function, $\pi:X\to Y$, is continuous and surjective, we say $G\curvearrowright Y$ is a \textbf{factor of} $\act$ and $\pi$ is a \textbf{factor map}. If $\emptyset\neq Z\subset X$ is a closed $G$-invariant subset of $X$, then we say $G\curvearrowright Z$ is a \textbf{subaction} of $\act$.

\end{deff}

\noindent
Let $\act$ be an action and $R\subset X\times X$ an equivalence relation. We say $R$ is $G$-\textbf{invariant} if $(gx,gy)\in R$ when $(x,y)\in R$. 
The following proposition is well known (e.g. see \cite[Appendix E.11-3.]{de2013elements}).
\begin{prop}\label{prop:factor-eq_rel}
    If $\pi\colon \act \to G\curvearrowright Y$ is a factor map, then $$R_\pi=\{(x,x')|\, \pi(x)=\pi(x')\}\subset X\times X$$ is a closed $G$-invariant equivalence relation. Conversely, if $Q\subset X\times X$ is a closed $G$-invariant equivalence relation then  $\pi\colon \act \to G\curvearrowright X/Q$ is a factor map where $G\curvearrowright X/Q$ is the induced (well-defined) action $gy=\pi(g\pi^{-1}(y))$. 
\end{prop}

We say $(X,T)$ is a \textbf{topological dynamical system (t.d.s.)} if $X$ is a compact metrizable space and $T:X\to X$ a homeomorphism. There is a standard identification between topological dynamical systems and  $\Z$-actions.

\subsection{Sofic groups} (Gromov; \cite{gromov1999endomorphisms})
For $n\in \N$ we write $\Sym(n)$ for the group of permutations of $[n]$. 
A group $G$ is \textbf{sofic} if there exists a sequence $\left\{ n_{i}\right\}_{i=1}^{\infty}$ of positive integers which go to infinity, and a sequence $\Sigma=\{s_i \colon G\rightarrow \Sym(n_i) \}_{i=1}^{\infty}$ that satisfies
\begin{align*}
\lim_{i\rightarrow\infty} \frac{1}{n_i} \left\vert \left\{ v\in [n_{i}]
: s_{i}(gg')v=s_{i}(g)s_{i}(g')v\right\}  \right\vert  &
=1 \mbox{ for every } g,g'\in G \text{, and}\\
\lim_{i\rightarrow\infty} \frac{1}{n_i}\left\vert \left\{  v\in[n_i]
: s_{i}(g)v\neq s_{i}(g')v\right\}  \right\vert  & =1 \mbox{ for every } g\neq g'\in G.
\end{align*}
In this case we say $\Sigma$ is a \textbf{sofic approximation sequence} for $G$. 
Amenable groups and residually finite groups are sofic. 
\subsection{Operations on open covers}

Let $\alpha$ and $\beta$ be finite open covers of $X$. 
 We say that $\beta$ \textbf{refines} $\alpha$, denoted $\beta \succ \alpha$, if every member of $\beta$ is contained in a member of $\alpha$.
 The \textbf{join} of $\alpha$ and $\beta$ is defined as $\alpha\vee \beta=\{A\cap B|\ A\in \alpha,B\in \beta\}$. 
 Let $n\in \N$. Denote by $\alpha^{[n]}$ the finite open cover of $X^{[n]}$, where 
$$X^{[n]}=\underbrace{X\times\cdots \times X}_{n \textrm{ times}}$$
consisting of sets of the form $U_1\times U_2\times \dots \times U_n$ for $U_1, \dots, U_n\in \alpha$.

\subsection{Sofic topological entropy} (Kerr and Li; \cite{kerr2011entropy,kerr2016ergodic})
We will use the notation 
$$d^2 (x,y)=\big (d(x,y)\big )^2.$$

   Let $\act$ be an action, $F\in \mathcal{F}(G)$, $\delta>0,$ $n\in\mathbb{N}$, and $$s\colon G\to
	\Sym(n).$$ Define $\Map(d,F,\delta,s)$ as the set of all maps
	$\varphi\colon [n]  \to  X$ such that
	\[
	\left(  \frac{1}{n}\sum_{v=1}^{n}d^{2}(\varphi(s_g(v)),g\varphi
	(v))\right)  ^{1/2}\leq\delta \ \mbox{for every }g \in F, 
	\]
	and  $\Map'(d,F,\delta,s,X)$ as the set of all maps
	$\varphi\colon [n]  \to  X$ such that
	\[
	\left(  \frac{1}{n}\sum_{v=1}^{n}d^{2}(\varphi(s_g(v)),g\varphi
	(v))\right)  ^{1/2}< \delta \ \mbox{for every }g \in F. 
	\]

Note that $\Map(d, F, \delta, s)$ is a closed subset of $X^{[n]}$, whereas $\Map'(d, F, \delta, s)$ is an open subset of $X^{[n]}$. 

\noindent

Let $\alpha$ be a non-empty finite open cover of $X$ and $\Sigma=\{s_i \colon G\rightarrow \Sym(n_i) \}_{i=1}^{\infty}$ a sofic approximation sequence for $G$. Let $Y\subset X^\ell$ for some $\ell\in \N$ and $\beta$ a finite open cover of $X^\ell$. Denote by $N(\beta,Y)$  be the minimal cardinality of a subset of $\beta$ which covers $Y$ (with the convention  $N(\beta,\emptyset)=0$). Define (with the convention  $\log 0=-\infty$) 
$$h_{F, \delta, \Sigma} (X,G, \alpha)= \limsup_{i\rightarrow \infty} \frac{1}{n_i} \log N\left(\alpha^{[n_i]},\Map(d,F,\delta,s_i) 
\right),$$

and 
$$h_{\Sigma} (X,G, \alpha)= \inf_{F\in \mathcal{F}_G} \inf_{\delta> 0} h_{F, \delta} (G, \alpha).
 $$
We say $h_{\Sigma} (X,G, \alpha)$ is the \textbf{sofic topological  entropy} of $\alpha$ for $\act$. Define the \textbf{sofic topological  entropy} of $\act$ as
$$h_{\Sigma} (X,G)= \sup_{\alpha} h_{\Sigma} (X,G, \alpha),$$
where $\alpha$ ranges over finite open covers of $X$. 

\begin{rem}
    There exist actions of sofic groups where different approximation sequences give different values of entropy \cite[Theorem 4.1]{bowen2020examples} (also see \cite[Theorem 1.1]{airey2022topological}).
\end{rem}

\begin{rem}\label{rem:top_ent_metric}(\cite[Definition 4.1]{L12} and \cite[Definition 10.22]{kerr2016ergodic})
    It is possible to define the sofic topological  entropy with respect to a continuous pseudometric $\rho$ (in particular $\rho=d$) on $X$ by defining quantities $h^{\epsilon}_{F, \delta, \Sigma} (X,G, \rho)$,  $h^{\epsilon}_{ \Sigma} (X,G, \rho)$ and finally 
    $$
    h_{ \Sigma} (X,G)=\lim_{\epsilon\rightarrow 0} h^{\epsilon}_{ \Sigma} (X,G, \rho).
    $$
    \end{rem}

\begin{rem}
  If $G$ is an amenable group, then the sofic topological entropy of an action $\act$ with respect to any approximation sequence coincides with the classical topological entropy \cite{kerr2013soficity}.   
\end{rem}
\subsection{Sofic mean dimension}\label{subsec:sofic mdim} (Li; \cite{L12}) 

\begin{deff}
Let $\alpha$ be a finite open cover of $X$. Denote
$$
\ord(\alpha)=(\max_{x\in X}\sum_{U\in \alpha}1_{U} (x))-1 \text{, and}
$$

$$
\DD(\alpha)=\min_{\beta \succ \alpha}\ord(\beta).
$$
\end{deff}
Note the trivial inequality
$$\DD(\alpha)\leq\ord(\alpha)<\infty$$
\begin{deff}\label{def:D_subset}
Let $Y\subset X$ be a (not necessarily closed) non-empty subset of $X$. Let $\alpha$ be a finite open cover of $X$. Denote
$$
\ord(\alpha|_{Y})=(\max_{x\in Y}\sum_{U\in \alpha}1_{U} (x))-1 \text{, and}
$$

\begin{equation} \label{eq:D_alpha_Y}
\DD(\alpha|_{Y})=\min_{\beta \succ \alpha}\ord(\beta|_{Y}).    
\end{equation}

\noindent
where the minimum is taken over finite open covers $\beta$ of $X$ (not $Y$) such that $\beta \succ \alpha$.
\noindent
In addition we define
$$
\ord(\alpha|_{\emptyset})=-\infty.
$$
\end{deff}

\begin{rem}\label{closed_case}
    If $Y$ is closed, then taking in definition \eqref{eq:D_alpha_Y} the minimum over finite open covers  $\beta$ of $Y$ such that $\beta \succ \alpha|_{Y}$ will give the same quantity. However, note that when $Y$ is not closed, this might fail. 
\end{rem}
\begin{rem}\label{rem:monotonicity}
It is clear that if $\beta \succ \alpha$ then $\DD(\beta)\geq \DD(\alpha).$
\end{rem}
\noindent
The following lemma appeared in \cite[Corollary 2.5]{LW}.
\begin{lemma}
[Lindenstrauss and Weiss]
\label{lem:subadd}

Let $\alpha$ and $\beta$ finite open covers of $X$. Then $$\DD (\alpha \vee \beta)\leq \DD (\alpha)+ \DD (\beta).$$ 
\end{lemma}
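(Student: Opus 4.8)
The plan is to translate the order of a cover into the dimension of a polyhedron, then realize $\alpha$ and $\beta$ by maps into nerves, take the product, and triangulate it so that its dimension is exactly the \emph{sum} of the two dimensions. First I would reduce to constructing a single good refinement. Set $a=\DD(\alpha)$ and $b=\DD(\beta)$, and choose finite open covers $\alpha'\succ\alpha$, $\beta'\succ\beta$ realizing $\ord(\alpha')=a$ and $\ord(\beta')=b$. Since refinement is preserved under joins, $\alpha'\vee\beta'\succ\alpha\vee\beta$, so by transitivity it is enough to produce one finite open cover $\gamma\succ\alpha\vee\beta$ with $\ord(\gamma)\le a+b$; this gives $\DD(\alpha\vee\beta)\le\ord(\gamma)\le a+b$. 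The naive candidate $\alpha'\vee\beta'$ does not work: since $\sum_{A\in\alpha',B\in\beta'}1_{A\cap B}(x)=\big(\sum_{A}1_A(x)\big)\big(\sum_B 1_B(x)\big)$, one only gets the multiplicative bound $\ord(\alpha'\vee\beta')\le(a+1)(b+1)-1$. The whole point is to replace this product by a finer cover.

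Next I would encode $\alpha'$ and $\beta'$ as maps into their nerves. Using partitions of unity $\{\phi_U\}$, $\{\psi_V\}$ subordinate to $\alpha'$ and $\beta'$ (available since $X$ is compact metrizable, hence normal), form the canonical maps $f\colon X\to|\mathcal N(\alpha')|$ and $g\colon X\to|\mathcal N(\beta')|$ into the geometric realizations of the nerves. Because $\ord(\alpha')=a$, no $a+2$ members of $\alpha'$ have common intersection, so $\mathcal N(\alpha')$ is a simplicial complex of dimension $\le a$, and likewise $\dim\mathcal N(\beta')\le b$. For the vertex $v_U$ corresponding to $U\in\alpha'$ one has $f^{-1}(\st(v_U))=\{x:\phi_U(x)>0\}\subseteq U$, so the preimages of the open stars of $\mathcal N(\alpha')$ refine $\alpha'$, and symmetrically for $g$ and $\beta'$.

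The key geometric step is to triangulate the product. Let $P=|\mathcal N(\alpha')|\times|\mathcal N(\beta')|$ and let $M$ be a simplicial subdivision of $P$ introducing no new vertices, namely the standard shuffle (staircase) triangulation of products of simplices, assembled coherently over the product cells via a linear order on the vertices of each factor. Then $M$ has vertex set $\{(v_U,v_V)\}$ and $\dim M=\dim P\le a+b$. I would record two facts: (i) the open-star cover of any simplicial complex has order equal to its dimension, since a point lies in exactly the stars of the vertices of its carrying simplex; and (ii) because $M$ subdivides $P$ with the same vertices, each star satisfies $\st_M(u)\subseteq\st_P(u)=\st(v_U)\times\st(v_V)$ for $u=(v_U,v_V)$ (a point of $\st_M(u)$ lies in an $M$-simplex with vertex $u$, whose carrying cell of $P$ must then also have $u$ as a vertex). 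Pulling back along $h=(f,g)\colon X\to P$, the family $\gamma=\{h^{-1}(\st_M(u))\}_u$ is a finite open cover of $X$ (the stars cover $P\supseteq h(X)$) with $\ord(\gamma)\le\dim M\le a+b$, since $h(x)$ lies in at most $\dim M+1$ stars; and by (ii), $h^{-1}(\st_M(u))\subseteq f^{-1}(\st(v_U))\cap g^{-1}(\st(v_V))\subseteq U\cap V\in\alpha'\vee\beta'$, so $\gamma\succ\alpha'\vee\beta'\succ\alpha\vee\beta$. Combining everything yields $\DD(\alpha\vee\beta)\le\ord(\gamma)\le a+b=\DD(\alpha)+\DD(\beta)$.

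I expect the main obstacle to be the product triangulation: producing $M$ with dimension exactly $\dim\mathcal N(\alpha')+\dim\mathcal N(\beta')$ (rather than having the topological dimension of the product drop) while keeping the vertex set unchanged so that fact (ii) holds. This rests on the additivity $\dim(P\times Q)=\dim P+\dim Q$ for polyhedra together with the explicit shuffle triangulation of $\Delta^m\times\Delta^n$; once that is in place, the order and refinement bookkeeping in the last paragraph is routine.
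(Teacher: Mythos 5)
Your proof is correct, and it is worth noting that the paper does not prove this lemma at all: it quotes it as \cite[Corollary 2.5]{LW}, where Lindenstrauss and Weiss derive it from their Proposition 2.4, namely the characterization that $\DD(\alpha)\le k$ if and only if there is an $\alpha$-compatible continuous map from $X$ into a polyhedron of dimension at most $k$, combined with the fact that a product of polyhedra of dimensions $a$ and $b$ has dimension $a+b$. Your argument follows the same geometric idea --- realize each cover by a map into a low-dimensional polyhedron and exploit additivity of dimension under products --- but it is self-contained where LW's is not: you reprove the one direction of their Proposition 2.4 that is needed (order-$a$ cover $\Rightarrow$ nerve map into an $a$-dimensional complex with star preimages refining the cover), and, more substantively, you replace the dimension-theoretic input (refining an arbitrary open cover of an $(a+b)$-dimensional polyhedron to one of order $a+b$, or the covering-dimension product theorem) by the explicit vertex-preserving shuffle triangulation $M$ of $\lvert\mathcal{N}(\alpha')\rvert\times\lvert\mathcal{N}(\beta')\rvert$ together with the star bookkeeping. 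The one step that genuinely needed care is your fact (ii), $\st_M(u)\subseteq\st_P(u)$, and your justification is sound: since $M$ subdivides the product cell complex $P$, each open cell of $P$ is a union of open $M$-simplices, so the carrier cell of a point of $\st_M(u)$ contains an $M$-simplex having $u$ as a vertex, and a vertex of $P$ lying in a closed product cell $\overline{\sigma}\times\overline{\tau}$ is necessarily a vertex of that cell. The trade-off is the expected one: LW's route is shorter given their compatible-map machinery, while yours is elementary and purely combinatorial, at the cost of carrying out the product triangulation explicitly.
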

\begin{prop}\label{prop:mdim_func}
 Let $X,Y$ be compact metrizable spaces and $f:X\rightarrow Y$ a continuous surjective map. Suppose that  $\alpha$ is a finite open cover of $Y$. Then
 $$ \DD (f^{-1}(\alpha))\leq  \DD (\alpha). $$
\end{prop}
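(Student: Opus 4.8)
The plan is to exhibit one explicit refinement of $f^{-1}(\alpha)$ whose order is bounded by $\DD(\alpha)$, and then quote the definition of $\DD$ as a minimum over refinements. The statement is essentially a pullback/monotonicity property of $\DD$, so I expect no serious obstacle; the only care needed is bookkeeping in the order comparison.

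First I would record the routine fact that $f^{-1}(\alpha)=\{f^{-1}(U)\mid U\in\alpha\}$ is indeed a finite open cover of $X$: each $f^{-1}(U)$ is open by continuity of $f$, and the sets cover $X$ because $\alpha$ covers $Y\supseteq f(X)$. Next, let $\gamma$ be a finite open cover of $Y$ realizing the minimum defining $\DD(\alpha)$, so that $\gamma\succ\alpha$ and $\ord(\gamma)=\DD(\alpha)$. I claim the pullback $f^{-1}(\gamma)=\{f^{-1}(V)\mid V\in\gamma\}$ is a finite open cover of $X$ refining $f^{-1}(\alpha)$. The refinement is immediate: if $V\in\gamma$, then $V\subseteq U$ for some $U\in\alpha$ since $\gamma\succ\alpha$, hence $f^{-1}(V)\subseteq f^{-1}(U)\in f^{-1}(\alpha)$.

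The heart of the argument is the comparison of orders, which rests on the pointwise identity $1_{f^{-1}(V)}(x)=1_{V}(f(x))$ valid for all $x\in X$ and $V\in\gamma$. Summing over $V\in\gamma$ gives, for each $x\in X$,
$$\sum_{V\in\gamma}1_{f^{-1}(V)}(x)=\sum_{V\in\gamma}1_{V}(f(x))\leq \max_{y\in Y}\sum_{V\in\gamma}1_{V}(y)=\ord(\gamma)+1,$$
and taking the maximum over $x\in X$ yields $\ord(f^{-1}(\gamma))\leq\ord(\gamma)$. If distinct members of $\gamma$ happen to share the same $f$-preimage, the multiplicity of $f^{-1}(\gamma)$ as a genuine cover can only drop further, so the inequality is safe in any case.

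Finally, since $f^{-1}(\gamma)$ is a finite open cover of $X$ with $f^{-1}(\gamma)\succ f^{-1}(\alpha)$, the definition of $\DD$ as a minimum over all such refinements gives
$$\DD(f^{-1}(\alpha))\leq\ord(f^{-1}(\gamma))\leq\ord(\gamma)=\DD(\alpha),$$
which is the desired conclusion. The one spot demanding attention is the order comparison in the previous paragraph, namely the indicator identity together with the harmless possibility that the pullback cover has fewer distinct members than $\gamma$; everything else is formal.
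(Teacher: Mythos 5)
Your proof is correct and follows essentially the same route as the paper's one-line argument: pull back a refinement $\gamma\succ\alpha$ to get $f^{-1}(\gamma)\succ f^{-1}(\alpha)$ and compare orders via the indicator identity $1_{f^{-1}(V)}(x)=1_{V}(f(x))$ (the paper even notes the stronger equality $\ord(f^{-1}(\gamma))=\ord(\gamma)$, which holds by surjectivity, though your inequality suffices). Instantiating at the minimizing cover rather than arguing for all refinements is an immaterial difference.
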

\begin{proof}
    Note that  if $\beta \succ \alpha$ then  $f^{-1}(\beta) \succ f^{-1}(\alpha)$ and $\ord(f^{-1}(\beta))=\ord(\beta)$.
\end{proof}
\begin{prop}\label{prop:mdim_nD}
   $$ \DD(\alpha^{[n]})\leq n \DD (\alpha). $$
\end{prop}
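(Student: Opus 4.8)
The plan is to avoid the naive product refinement and instead exploit the two tools already established: the subadditivity of $\DD$ under joins (Lemma \ref{lem:subadd}) and its monotonicity under pullback by continuous surjections (Proposition \ref{prop:mdim_func}). The naive idea — take a refinement $\beta \succ \alpha$ with $\ord(\beta)=\DD(\alpha)$ and use $\beta^{[n]}\succ\alpha^{[n]}$ — fails badly, since multiplicities multiply across coordinates and one only gets the useless bound $\DD(\alpha^{[n]})\le(\DD(\alpha)+1)^n-1$. The key observation that repairs this is that $\alpha^{[n]}$ decomposes \emph{additively} as a join of pullbacks of $\alpha$.

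Concretely, for each $i\in[n]$ let $p_i\colon X^{[n]}\to X$ be the projection onto the $i$-th coordinate. Each $p_i$ is continuous and surjective (as $X\neq\emptyset$), so Proposition \ref{prop:mdim_func} applies and gives $\DD(p_i^{-1}(\alpha))\le\DD(\alpha)$. First I would verify the identity
$$\alpha^{[n]}=\bigvee_{i=1}^n p_i^{-1}(\alpha),$$
which is immediate upon unwinding the definitions: a typical member of the right-hand side is $\bigcap_{i=1}^n p_i^{-1}(U_i)=U_1\times\cdots\times U_n$ with $U_1,\dots,U_n\in\alpha$, which is precisely a typical member of $\alpha^{[n]}$.

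Next I would extend Lemma \ref{lem:subadd} from two covers to $n$ covers by a routine induction, obtaining $\DD(\alpha_1\vee\cdots\vee\alpha_n)\le\sum_{i=1}^n\DD(\alpha_i)$. Combining the three ingredients then finishes the argument:
$$\DD(\alpha^{[n]})=\DD\Big(\bigvee_{i=1}^n p_i^{-1}(\alpha)\Big)\le\sum_{i=1}^n\DD(p_i^{-1}(\alpha))\le\sum_{i=1}^n\DD(\alpha)=n\,\DD(\alpha).$$

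I do not anticipate a serious obstacle. The only conceptual step is recognizing that the product cover $\alpha^{[n]}$ is a join of coordinate pullbacks, which converts the multiplicative growth of naive multiplicities into the desired additive bound; once this is in hand, subadditivity and pullback-monotonicity do all the work. The remaining points — surjectivity of the projections and the inductive extension of Lemma \ref{lem:subadd} — are entirely routine.
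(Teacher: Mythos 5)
Your proof is correct and is essentially identical to the paper's own argument: the paper also writes $\alpha^{[n]}=\bigvee_{i=1}^{n}\pi_i^{-1}(\alpha)$ for the coordinate projections and then combines Lemma \ref{lem:subadd} with Proposition \ref{prop:mdim_func}. The only difference is that you spell out the routine induction extending subadditivity to $n$ covers, which the paper leaves implicit.
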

\begin{proof}
    Let $\pi_i: X^{[n]}\rightarrow X$ be the projection on the $i$-th factor. Note:
    $$\alpha^{[n]}=\bigvee_{i=1}^{n} \pi_i^{-1}(\alpha).$$
The result now follows from  Lemma \ref{lem:subadd} and Proposition \ref{prop:mdim_func}.
    
\end{proof}
\noindent
In \cite{L12} Li defined mean dimension for sofic group actions. Let us give the details.

\noindent

Define
$$\DD(\alpha, d, F, \delta, s)=\DD(\alpha^{[n]}|_{\Map(d, F, \delta, s)})\text{ and}$$ 
\begin{equation}\label{eq:D'}
\DD'(\alpha, d, F, \delta, s,X)=\DD(\alpha^{[n]}|_{\Map'(d, F, \delta, s,X)}).
\end{equation}

\noindent If $\Map(d, F, \delta, s)=\emptyset$, then set $\DD(\alpha, d, F, \delta, s)=-\infty$ and similarly for $\DD'(\alpha, d, F, \delta, s)$. Clearly $\DD'(\alpha, d, F, \delta, s,X)\leq \DD(\alpha, d, F, \delta, s,X)$.
 
\noindent
Let $F\in \mathcal{F}(G)$ and
$\delta > 0$. For a finite open cover $\alpha$ of $X$ and a sofic approximation sequence for $G$, $\Sigma=\{s_i \colon G\rightarrow \Sym(n_i) \}_{i=1}^{\infty}$, we define
\begin{align*}
\mdim_\Sigma(\alpha, d ,F, \delta ) &=
\varlimsup_{i\to\infty} \frac{\DD(\alpha, d, F, \delta, s_i)}{n_i},\\
\mdim_\Sigma(\alpha ,F ) &= \inf_{\delta > 0} \mdim_\Sigma(\alpha, d ,F, \delta ),\text{ and}\\
\mdim_\Sigma(\alpha)&= \inf_{F'\in \mathcal{F}(G)} \mdim_\Sigma(\alpha,F').
\end{align*}

\noindent Note that if $\Map (d ,F,\delta ,s_i )$ is empty for all sufficiently large $i$, then
$$\mdim_\Sigma(\alpha, d ,F, \delta ) = -\infty.$$
Define the {\bf sofic mean dimension (with respect to $\Sigma$) } of $\act$  as
$$ \mdim_\Sigma (X,G) = \sup_{\alpha} \mdim_\Sigma(\alpha)$$
for $\alpha$ ranging over  finite open covers of $X$.

\begin{rem}
\label{rem:-infty}
Note that $\mdim_{\Sigma}(X,G)=-\infty$ if and only if $\h_{\Sigma}(X,G)=-\infty$. There exist examples with this property \cite[Subsection 3.1.2]{bowen2018brief}.
\end{rem}

\begin{lemma}
Let $\act$ be an action and $\alpha$ a finite open cover of $X$. It holds that $\mdim_\Sigma(\alpha, d ,F, \delta )\leq \DD(\alpha)$ and $\mdim_\Sigma(\alpha)\leq \DD(\alpha)$. 
\end{lemma}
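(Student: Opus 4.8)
The plan is to prove the per-$(F,\delta)$ inequality first and then deduce the global one by taking infima. The whole argument reduces to bounding the numerator $\DD(\alpha, d, F, \delta, s_i)=\DD(\alpha^{[n_i]}|_{\Map(d,F,\delta,s_i)})$ uniformly by $n_i\,\DD(\alpha)$, dividing by $n_i$, and passing to the limit superior. The two ingredients I would invoke are an elementary monotonicity of $\DD(\cdot|_Y)$ in the set $Y$, together with Proposition \ref{prop:mdim_nD}.

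First I would record the monotonicity: for any non-empty subset $Y\subset X^{[n]}$ and any finite open cover $\gamma$ of $X^{[n]}$ one has $\DD(\gamma|_Y)\leq \DD(\gamma)$. This is immediate from Definition \ref{def:D_subset}: for every refinement $\beta\succ\gamma$ the order $\ord(\beta|_Y)=\max_{x\in Y}\sum_{U\in\beta}1_U(x)-1$ is a maximum over the smaller set $Y$, hence $\ord(\beta|_Y)\leq\ord(\beta)$, and taking the minimum over the same family of refinements $\beta$ yields $\DD(\gamma|_Y)\leq\DD(\gamma)$. (If $Y=\emptyset$ the left side is $-\infty$ and the inequality is trivial.)

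Next I would apply this with $\gamma=\alpha^{[n_i]}$ and $Y=\Map(d,F,\delta,s_i)\subset X^{[n_i]}$, giving $\DD(\alpha, d, F, \delta, s_i)\leq \DD(\alpha^{[n_i]})$; Proposition \ref{prop:mdim_nD} then furnishes $\DD(\alpha^{[n_i]})\leq n_i\,\DD(\alpha)$. Combining, $\DD(\alpha, d, F, \delta, s_i)/n_i\leq \DD(\alpha)$ for every $i$ (and also in the degenerate case where $\Map(d,F,\delta,s_i)$ is eventually empty, since then the left side equals $-\infty$). Taking $\varlimsup_{i\to\infty}$ produces $\mdim_\Sigma(\alpha, d, F, \delta)\leq \DD(\alpha)$. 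The second assertion is then immediate, because $\mdim_\Sigma(\alpha)=\inf_{F'\in\mathcal{F}(G)}\inf_{\delta>0}\mdim_\Sigma(\alpha, d, F', \delta)$ is an infimum of quantities each of which we have just bounded by $\DD(\alpha)$.

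I do not expect any genuine obstacle here; the only point requiring care is that, by Definition \ref{def:D_subset}, the minimum defining $\DD(\gamma|_Y)$ ranges over refinements $\beta$ of the cover of the whole ambient space $X^{[n]}$ rather than of $Y$ itself, so the monotonicity comparison is honestly against the \emph{same} family of covers $\beta$. This is exactly what makes the resulting bound uniform in $i$, with no dependence on $F$ or $\delta$, which is what lets the infimum over $(F,\delta)$ pass through cleanly.
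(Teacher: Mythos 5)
Your proposal is correct and follows essentially the same route as the paper's proof: bound $\DD(\alpha^{[n_i]}|_{\Map(d,F,\delta,s_i)})\leq \DD(\alpha^{[n_i]})\leq n_i\,\DD(\alpha)$ via Proposition \ref{prop:mdim_nD}, divide by $n_i$, take $\varlimsup$, and pass to the infimum over $(F,\delta)$. You merely make explicit two points the paper leaves implicit --- the monotonicity $\DD(\gamma|_Y)\leq\DD(\gamma)$ (valid precisely because Definition \ref{def:D_subset} minimizes over covers of the ambient space) and the degenerate case $\Map(d,F,\delta,s_i)=\emptyset$ --- which is a harmless and arguably welcome elaboration.
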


\begin{proof}
By Proposition \ref{prop:mdim_nD} it holds that $\DD(\alpha^{[n]}|_{\Map(d, F, \delta, s)})\leq \DD(\alpha^{[n]})\leq n\DD(\alpha).$
This implies that 
\[
\varlimsup_{i\to\infty} \frac{\DD(\alpha, d, F, \delta, s_i)}{n_i}\leq \DD(\alpha).
\]
The other inequality follows from the definition. 
\end{proof}
\begin{example}
\label{example_d}
Let $\Sigma$ be a sofic approximation of $G$, $d\in \N\cup \{\infty\}$, and $G \curvearrowright I^d$ the left action induced by the shift. It holds that $\mdim_{\Sigma}([0,1]^d,G)=d$ \cite[Theorem 7.1]{L12}.
\end{example}

\begin{rem} \label{rem:notation}
Assume $Y$ is a (strict) subaction of $X$ and $\beta$ is an open cover of $Y$, then one may write
$\mdim_\Sigma(Y,\beta)$ or $\mdim_\Sigma(Y,G,\beta)$ in order to emphasize that all operations are within $Y$.
If $\alpha$ is an open cover of $X$, then one may write interchangeably $\mdim_\Sigma(Y,\alpha|_{Y})$ or $\mdim_\Sigma(\alpha|_{Y})$ as the context is clear.

\end{rem}

\begin{rem} \label{mdim_conj}

The quantities
$\mdim_\Sigma(\alpha ,F )$, $\mdim_\Sigma(\alpha)$ and $\mdim_\Sigma(X)$ do not depend on the choice of the compatible metric $d$ (\cite{L12}). In particular, $\mdim_\Sigma(X,G)$ is invariant under conjugacy.
\end{rem}

\begin{rem} \label{R-mean top dim1}
Note that $\mdim_\Sigma(\alpha, d ,F, \delta )$ decreases when $\delta$ decreases and $F$ increases. Thus, in the definitions of $\mdim_\Sigma(\alpha ,F )$ and $\mdim_\Sigma(\alpha)$ one can also replace $\inf_{\delta>0}$ and $\inf_F$ by $\lim_{\delta\to 0}$ and $\lim_{F\to \infty}$ respectively, where $\lim_{F\to \infty}$  denotes the net limit associated with the partial order on $\mathcal{F}(G)$ defined by $F_1\le F_2$ if and only if  $F_1\subseteq F_2$. Similarly defining the partial order on  $\mathcal{F}(G)\times \RR_+$ by $(F, \delta)\ge (F', \delta)$ if and only if $F\supseteq F'$ and $\delta\le \delta'$,
one may write
$$ \mdim_\Sigma(\alpha)= \lim_{(F, \delta)\to (\infty,0)} \mdim_\Sigma(\alpha, d ,F, \delta).$$
\end{rem}
\noindent
The definition of $\DD'(\alpha, d, F, \delta, s)$ will be used in Section \ref{sec:mdim_Borel} through the following lemma. 

\begin{lemma}\label{lem:D'}
     Let $\act$ be an action and $F\in \mathcal{F}(G)$. For a finite open cover $\alpha$ of $X$ and $\Sigma=\{s_i \colon G\rightarrow \Sym(n_i) \}_{i=1}^{\infty}$ a sofic approximation sequence for $G$, it holds
     {
      $$
\mdim_\Sigma(\alpha,F) = 
\lim_{\delta \rightarrow 0} 
\varlimsup_{i\to\infty} \frac{\DD'(\alpha, d, F, \delta, s_i)}{n_i}.
    $$
    }
\end{lemma}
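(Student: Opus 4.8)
The plan is to sandwich the quantity $\DD'(\alpha, d, F, \delta, s)$ between two instances of $\DD(\alpha, d, F, \delta, s)$ by exploiting the elementary set inclusions relating the open and closed ``Bowen'' sets $\Map'$ and $\Map$ at nearby scales, and then to pass to the limit in $\delta$. Since both the statement and the definition of $\mdim_\Sigma(\alpha,F)$ involve only monotone families indexed by $\delta$, the whole argument is formal once the inclusions are in place.

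First I would record the monotonicity facts. As $\ord(\alpha|_Y)=(\max_{x\in Y}\sum_{U\in\alpha}1_U(x))-1$ is visibly non-decreasing in $Y$, the same holds for $\DD(\alpha|_Y)=\min_{\beta\succ\alpha}\ord(\beta|_Y)$; hence $Y\subseteq Y'$ implies $\DD(\alpha^{[n]}|_Y)\le \DD(\alpha^{[n]}|_{Y'})$. Next, directly from the definitions, whenever $0<\delta'<\delta$ one has
$$\Map(d,F,\delta',s)\ \subseteq\ \Map'(d,F,\delta,s,X)\ \subseteq\ \Map(d,F,\delta,s),$$
the first inclusion because a map with root-mean-square displacement $\le\delta'$ has displacement $<\delta$, and the second because $<\delta$ implies $\le\delta$. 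Applying $\DD(\alpha^{[n]}|_{\cdot})$ and using monotonicity gives, for $0<\delta'<\delta$,
$$\DD(\alpha, d, F, \delta', s)\ \le\ \DD'(\alpha, d, F, \delta, s,X)\ \le\ \DD(\alpha, d, F, \delta, s),$$
which also holds in the degenerate case $\Map=\emptyset$ thanks to the convention assigning the value $-\infty$.

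Now I would divide by $n_i$ and take $\varlimsup_{i\to\infty}$. Writing $g(\delta)=\varlimsup_i \DD'(\alpha,d,F,\delta,s_i)/n_i$, the sandwich becomes, for $0<\delta'<\delta$,
$$\mdim_\Sigma(\alpha,d,F,\delta')\ \le\ g(\delta)\ \le\ \mdim_\Sigma(\alpha,d,F,\delta).$$
Since $\Map'(d,F,\delta_1,s,X)\subseteq\Map'(d,F,\delta_2,s,X)$ for $\delta_1<\delta_2$, the function $g$ is non-decreasing in $\delta$, so $\lim_{\delta\to 0}g(\delta)=\inf_{\delta>0}g(\delta)$ exists. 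By Remark \ref{R-mean top dim1}, $\mdim_\Sigma(\alpha,d,F,\delta)$ is non-decreasing in $\delta$ and $\lim_{\delta\to 0}\mdim_\Sigma(\alpha,d,F,\delta)=\inf_{\delta>0}\mdim_\Sigma(\alpha,d,F,\delta)=\mdim_\Sigma(\alpha,F)$.

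Finally I would close the two inequalities. Letting $\delta\to 0$ in the right-hand bound gives $\lim_{\delta\to0}g(\delta)\le\mdim_\Sigma(\alpha,F)$. For the reverse, fix any $\delta>0$, choose $\delta'$ with $0<\delta'<\delta$, and combine the left-hand bound with $\mdim_\Sigma(\alpha,F)=\inf_{\eta}\mdim_\Sigma(\alpha,d,F,\eta)\le\mdim_\Sigma(\alpha,d,F,\delta')\le g(\delta)$; as $\delta>0$ was arbitrary this yields $\mdim_\Sigma(\alpha,F)\le\inf_{\delta>0}g(\delta)=\lim_{\delta\to0}g(\delta)$, and the two bounds give the claimed equality. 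The argument is entirely formal, and the only point requiring care — which I would flag as the main pitfall — is the staggering of scales $\delta'<\delta$: it is precisely this that traps each open set $\Map'$ between closed sets $\Map$ at two different radii, letting $g$ inherit both one-sided bounds.
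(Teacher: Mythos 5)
Your proof is correct and takes essentially the same route as the paper: the paper's proof likewise sandwiches $\Map'(d,F,\delta,s_i,X)$ between $\Map(d,F,\delta/2,s_i,X)$ and $\Map(d,F,\delta,s_i,X)$ and then invokes the monotonicity of Remark \ref{R-mean top dim1} to replace $\inf_{\delta>0}$ by $\lim_{\delta\to 0}$. Your only (harmless) variations are using an arbitrary staggered scale $\delta'<\delta$ where the paper fixes $\delta'=\delta/2$, and spelling out the monotonicity of $\DD(\alpha^{[n]}|_{Y})$ in $Y$ and the handling of the empty case, which the paper leaves implicit.
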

\begin{proof}
 Note that for all $\delta>0$ and $i\in \N$,
 $$
 \Map(d,F,\delta/2,s_i,X) \subset \Map'(d,F,\delta,s_i,X)\subset \Map(d,F,\delta,s_i,X).
  $$
  As by Remark \ref{R-mean top dim1} one may replace $\inf_{\delta > 0}$ by $\lim_{\delta \rightarrow 0}$; the proof is completed.
\end{proof}

 The following lemma is probably known. 
\begin{lemma}
\label{lem:basic}
     Let $\act$ an action, $\Sigma$ a sofic approximation sequence for $G$ and $\alpha,\beta$ finite open covers of $X$. The following holds:
 \begin{enumerate}
     \item If $\alpha \succ \beta$ then $\mdim_{\Sigma}(\alpha)\geq \mdim_{\Sigma}(\beta)$.
     \item $\mdim_{\Sigma} (\alpha \vee \beta)\leq \mdim_{\Sigma} (\alpha)+ \mdim_{\Sigma} (\beta).$
 \end{enumerate}
\end{lemma}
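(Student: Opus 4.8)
The plan is to prove both statements ``pointwise'' at the level of $\DD(\cdot, d, F, \delta, s)$, and then to propagate the resulting inequalities through the $\varlimsup_i$, $\inf_\delta$ and $\inf_F$ that define $\mdim_\Sigma$.

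For part (1), first observe that $\alpha \succ \beta$ forces $\alpha^{[n]} \succ \beta^{[n]}$: a member $U_1 \times \cdots \times U_n$ of $\alpha^{[n]}$ is contained in $V_1 \times \cdots \times V_n \in \beta^{[n]}$ whenever $U_j \subseteq V_j$ with $V_j \in \beta$. Next I would record the relative monotonicity of $\DD(\cdot|_Y)$: if $\gamma \succ \gamma'$, then the minimisation defining $\DD(\gamma|_Y)$ ranges over the covers refining $\gamma$, which form a subfamily of those refining $\gamma'$, so $\DD(\gamma|_Y) \geq \DD(\gamma'|_Y)$ (this needs no closedness of $Y$, and is the relative analogue of Remark \ref{rem:monotonicity}). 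Applying this with $Y = \Map(d,F,\delta,s)$ gives $\DD(\alpha, d, F, \delta, s) \geq \DD(\beta, d, F, \delta, s)$ for every $s$ (and trivially when $Y = \emptyset$, where both sides are $-\infty$). Dividing by $n_i$ and using that $\varlimsup_i$, $\inf_\delta$ and $\inf_F$ all preserve $\geq$ yields $\mdim_\Sigma(\alpha) \geq \mdim_\Sigma(\beta)$.

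For part (2), the key algebraic identity is $(\alpha \vee \beta)^{[n]} = \alpha^{[n]} \vee \beta^{[n]}$, since a typical member of either side is $\prod_{j}(A_j \cap B_j)$ with $A_j \in \alpha$ and $B_j \in \beta$. The crucial analytic input is a \emph{relative} version of Lemma \ref{lem:subadd}, namely $\DD((\gamma \vee \eta)|_Y) \leq \DD(\gamma|_Y) + \DD(\eta|_Y)$ for $Y = \Map(d,F,\delta,s)$. Here I would exploit that $\Map(d,F,\delta,s)$ is \textbf{closed} in $X^{[n]}$: by Remark \ref{closed_case} the quantity $\DD(\gamma|_Y)$ coincides with the intrinsic $\DD$ of the cover $\gamma|_Y$ of the compact metrizable space $Y$. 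Since $(\gamma|_Y) \vee (\eta|_Y) = (\gamma \vee \eta)|_Y$, applying Lemma \ref{lem:subadd} \emph{on $Y$} to $\gamma|_Y$ and $\eta|_Y$ gives exactly the relative inequality. Taking $\gamma = \alpha^{[n]}$, $\eta = \beta^{[n]}$ and using the identity above, this reads $\DD(\alpha \vee \beta, d, F, \delta, s) \leq \DD(\alpha, d, F, \delta, s) + \DD(\beta, d, F, \delta, s)$ for every $s$.

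It remains to pass to the limits. Dividing by $n_i$ and using subadditivity of $\varlimsup$ gives $\mdim_\Sigma(\alpha \vee \beta, d, F, \delta) \leq \mdim_\Sigma(\alpha, d, F, \delta) + \mdim_\Sigma(\beta, d, F, \delta)$. The step I expect to require the most care is the interchange of $\inf_\delta$ and $\inf_F$ with the sum, since $\inf_\delta(a_\delta + b_\delta) \geq \inf_\delta a_\delta + \inf_\delta b_\delta$ goes the wrong way in general. I would instead invoke the monotonicity of $\mdim_\Sigma(\cdot, d, F, \delta)$ in $\delta$ and in $F$ (Remark \ref{R-mean top dim1}): because $\mdim_\Sigma(\alpha, d, F, \delta)$ and $\mdim_\Sigma(\beta, d, F, \delta)$ are antitone in $(F,\delta)$, their nets converge to their infima and ``limit of a sum equals sum of limits'' holds for antitone nets over a directed set (given $\epsilon$, choose $\delta$ resp.\ $F$ witnessing each infimum and use directedness to find a common refinement), with the $-\infty$ cases absorbed since everything is bounded above by $\DD(\alpha)$ and $\DD(\beta)$. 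This yields $\mdim_\Sigma(\alpha \vee \beta, F) \leq \mdim_\Sigma(\alpha, F) + \mdim_\Sigma(\beta, F)$ and then $\mdim_\Sigma(\alpha \vee \beta) \leq \mdim_\Sigma(\alpha) + \mdim_\Sigma(\beta)$, completing the proof.
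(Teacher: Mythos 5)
Your proof is correct and follows essentially the same route as the paper: both reduce the claim to the pointwise inequalities $\DD(\alpha, d, F, \delta, s)\geq \DD(\beta, d, F, \delta, s)$ and $\DD(\alpha\vee\beta, d, F, \delta, s)\leq \DD(\alpha, d, F, \delta, s)+\DD(\beta, d, F, \delta, s)$ via $\alpha^{[n]}\succ\beta^{[n]}$, the identity $(\alpha\vee\beta)^{[n]}=\alpha^{[n]}\vee\beta^{[n]}$, and the relative forms of Remark \ref{rem:monotonicity} and Lemma \ref{lem:subadd}, and then pass to $\mdim_\Sigma$. If anything, you are more careful than the paper at two points it leaves implicit: justifying the relative subadditivity by using that $\Map(d,F,\delta,s)$ is closed together with Remark \ref{closed_case}, and handling the exchange of the infima over $(F,\delta)$ with the sum via the antitone net limits of Remark \ref{R-mean top dim1}.
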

\begin{proof}
   Let  $F\in \mathcal{F}(G)$, $\delta>0,$ $n\in\mathbb{N}$, and $\Sigma\ni s\colon G\to \Sym(n).$  To prove $(1)$ note that as $\alpha \succ \beta$ then $\alpha^{[n]} \succ \beta^{[n]}$ and in particular $\alpha^{[n]}|_{\Map(d, F, \delta, s)}\succ\beta^{[n]}|_{\Map(d, F, \delta, s)}$. By Remark \ref{rem:monotonicity} it holds that $\DD(\alpha^{[n]}|_{\Map(d, F, \delta, s)})\geq \DD(\beta^{[n]}|_{\Map(d, F, \delta, s)})$, whenever $\alpha \succ \beta$.  Thus
$\mdim_{\Sigma}(\alpha)\geq \mdim_{\Sigma}(\beta)$.
    
    To prove $(2)$ note that from Lemma \ref{lem:subadd} and the equality $(\alpha \vee \beta)^{[n]}=\alpha^{[n]}\vee \beta^{[n]}$ one has 
  \begin{align*}
\DD((\alpha \vee \beta)^{[n]}|_{\Map(d, F, \delta, s)}) &= \DD(\alpha^{[n]}\vee \beta^{[n]}|_{\Map(d, F, \delta, s)}) \\
&\leq \DD(\alpha^{[n]}|_{\Map(d, F, \delta, s)}) + \DD(\beta^{[n]}|_{\Map(d, F, \delta, s)})
\end{align*}
Thus $\mdim_{\Sigma} (\alpha \vee \beta)\leq \mdim_{\Sigma} (\alpha)+ \mdim_{\Sigma} (\beta).$

\end{proof}

\begin{rem}
When $G$ is an amenable group the mean dimension of an action $\act$ does not depend on the choice of the sofic approximation sequence and coincides with the classical notion of mean dimension \cite{L12,jin2023sofic}, so we omit writing $\Sigma$ in this situation. To the best of our knowledge, it is an open question if there exists a group action of a sofic group where the sofic mean dimension depends on the choice of the approximation sequence.  
\end{rem}

We say $\act$ is a \textbf{trivial action} if for all $x\in X$ and $g\in G$, $gx=x$.
The following fundamental result follows from known results. 
\begin{prop}
\label{thm:trivial_htop_0}
   Let $\Sigma$ be a sofic approximation sequence for $G$,  and $\act$ a trivial action. Then  $h_{ \Sigma} (X,G)= 0$.
\end{prop}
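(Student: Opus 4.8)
### Proof Proposal

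\textbf{Approach.} The plan is to show that for a trivial action, the sofic entropy $h_\Sigma(X,G)$ reduces to an entropy-type quantity that simply counts how well one can distinguish points of $X$ using the sofic approximation, and that because $g$ acts as the identity for every $g$, the defining constraint on the maps $\varphi$ collapses to a statement that barely constrains $\varphi$ at all. The key observation is that for a trivial action, $g\varphi(v) = \varphi(v)$, so the condition defining $\Map(d, F, \delta, s)$ becomes
\[
\left(\frac{1}{n}\sum_{v=1}^{n} d^2\big(\varphi(s_g(v)), \varphi(v)\big)\right)^{1/2} \le \delta \quad \text{for every } g \in F.
\]
This no longer references the dynamics except through the permutations $s_g$; it merely says that $\varphi$ must take nearly the same value at $v$ and at $s_g(v)$ for most $v$.

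\textbf{Key steps.} First I would fix an arbitrary finite open cover $\alpha$ and aim to show $h_\Sigma(X,G,\alpha) \le 0$, which combined with $h_\Sigma(X,G) \ge 0$ (entropy is non-negative, or follows by taking the trivial cover) yields equality at $0$. To bound $h_{F,\delta,\Sigma}(X,G,\alpha)$ from above, I would estimate $N(\alpha^{[n_i]}, \Map(d,F,\delta,s_i))$. The idea is that a map $\varphi \in \Map(d,F,\delta,s_i)$ is, up to a small fraction of coordinates controlled by $\delta$ and the soficity condition, constant along the orbits of the permutation group generated by $\{s_i(g) : g \in F\}$. Since a single point of $X$ gives one element of the cover $\alpha^{[n_i]}$ needed to cover the constant maps, the number of elements of $\alpha^{[n_i]}$ required grows subexponentially in $n_i$: essentially like $(\#\alpha)^{o(n_i)}$, because only the $o(n_i)$ ``exceptional'' coordinates where $\varphi$ deviates contribute to the combinatorial count. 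Thus $\frac{1}{n_i}\log N(\alpha^{[n_i]}, \Map(d,F,\delta,s_i)) \to 0$ as we first take $i \to \infty$, then $\delta \to 0$ and $F \to \infty$.

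\textbf{Main obstacle.} The delicate point, and the reason the statement says it ``follows from known results'' rather than being immediate, is making rigorous the passage from ``$\varphi$ satisfies the $L^2$-type constraint'' to ``$\varphi$ is essentially constant on a set of coordinates of full density.'' The $\delta$-smallness in the averaged $d^2$-sense only forces closeness on a large-measure subset of $[n_i]$, and one must combine this with the second soficity axiom (that $s_i(g)v \ne v$ for most $v$ when $g \ne e_G$, i.e. the $s_i$ generate permutations with few fixed points) to argue that these approximate-invariance constraints, taken over a large enough $F$, pin $\varphi$ down on a density-$1$ set. The cleanest route is to invoke the known fact that sofic topological entropy is monotone under factor maps and is bounded above by the entropy computed via a finite metric space approximation, and then to note that the trivial action factors onto any finite quotient trivially; alternatively one cites that a trivial action has a unique invariant measure (any measure), so its entropy agrees with the measure-theoretic sofic entropy of an identity system, which is $0$. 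I would therefore organize the argument around the inequality $h_\Sigma(X,G,\alpha) \le 0$ via the counting estimate above, deferring the density computation to the two defining soficity limits, and flag this density argument as the one genuinely technical step.
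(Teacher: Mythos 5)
There is a genuine gap, and it sits exactly at the step you flagged as ``the genuinely technical'' one. Your central counting claim --- that any $\varphi\in\Map(d,F,\delta,s_i)$ is, off a set of coordinates of density $o(1)$, constant along the orbits of the permutations $\{s_i(g):g\in F\}$ --- is false. Take $G=\Z$ acting trivially on $X=[0,1]$, $F=\{1\}$, and $s_i(1)$ an $n_i$-cycle; the map $\varphi(v)=v/n_i$ satisfies $\frac1{n_i}\sum_v d^2(\varphi(s_1(v)),\varphi(v))\le \tfrac{1}{n_i^2}+\tfrac{1}{n_i}<\delta^2$ for large $i$, yet its values equidistribute over $[0,1]$ and are not close to constant on any density-one set. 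The $L^2$ constraint only bounds total quadratic variation along orbits and permits slow drift, so no argument combining it with the soficity axioms can ``pin $\varphi$ down on a density-$1$ set''; a correct counting proof would need a different mechanism (coarse-graining at scale $\epsilon$ with control of the number of $\epsilon$-jumps), which is the real work you have deferred. Your fallback citations do not repair this: sofic topological entropy is \emph{not} monotone under factor maps (the paper itself notes that entropy may strictly increase along factors for sofic group actions), and a trivial action does not have ``a unique invariant measure'' --- every Borel probability measure is invariant --- so a variational-principle route would require $h_{\Sigma,\mu}\le 0$ for \emph{all} invariant $\mu$, which you do not establish.

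There is a second, smaller gap on the other side: your assertion that ``entropy is non-negative'' is false for sofic entropy, which can equal $-\infty$ when the model spaces $\Map(d,F,\delta,s_i)$ are eventually empty (see Remark \ref{rem:-infty}); the trivial cover only yields $h_\Sigma\ge 0$ once non-emptiness is known. For a trivial action this is immediate --- any constant map $\varphi\equiv x_0$ lies in $\Map(d,F,\delta,s)$ since all the summands vanish --- and this is precisely the paper's first step. For the upper bound the paper avoids counting entirely: a trivial action is distal ($\inf_{g}d(gx,gy)=d(x,y)>0$ for $x\neq y$), and by a result of Kerr and Li distal actions of sofic groups satisfy $h_\Sigma(X,G)\le 0$. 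So the intended proof is a two-line reduction to a known theorem, whereas your direct approach, as written, rests on a false structural claim about approximate microstates.
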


\begin{proof}
   Let  $F\in \mathcal{F}(G)$, $\delta>0,$ $n\in\mathbb{N}$, and $\Sigma\ni s\colon G\to \Sym(n).$ Note that any fixed map of the form $\varphi\colon [n]  \to  X$, $\varphi\equiv x_0\in X$, trivially obeys
   $$
\frac{1}{n}\sum_{v=1}^{n}d^{2}(\varphi(s_g(v)),g\varphi
	(v))=\frac{1}{n}\sum_{v=1}^{n}d^{2}(x_0,x_0)=0 \ \mbox{for every }g \in F. 
   $$
   
  Thus $\varphi\in \Map(d,F,\delta,s)$ and $\Map(d,F,\delta,s)\neq \emptyset$. Conclude $h_{\Sigma} (X,G)\geq 0$. 
  Every trivial action is distal, that is $\inf_{g\in G}d(gx,gy)>0$ for all $x\neq y\in X$, hence $h_{ \Sigma} (X,G)\leq 0$ (\cite[Corollary 8.5]{kerr2013combinatorial}). 
  
\end{proof}

\begin{prop}\label{thm:trivial_mdim_0}
   Let $\Sigma$ be a sofic approximation sequence for $G$,  and $\act$ a trivial action. Then  $\mdim_\Sigma (X,G)=0$.
\end{prop}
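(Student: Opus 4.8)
The plan is to sandwich $\mdim_\Sigma(X,G)$ between $0$ and $0$. The lower bound $\mdim_\Sigma(X,G)\ge 0$ is immediate: exactly as in the proof of Proposition \ref{thm:trivial_htop_0}, every constant map $\varphi\equiv x_0$ lies in $\Map(d,F,\delta,s)$, so this set is non-empty and hence $\DD(\alpha^{[n]}|_{\Map(d,F,\delta,s)})\ge 0$ for every finite open cover $\alpha$, every $F\in\mathcal F(G)$, every $\delta>0$ and every $s\in\Sigma$. Passing to the $\varlimsup$ and then to the infima over $\delta$ and $F$ gives $\mdim_\Sigma(\alpha)\ge 0$, whence $\mdim_\Sigma(X,G)\ge 0$. (Equivalently, by Remark \ref{rem:-infty} together with Proposition \ref{thm:trivial_htop_0} the value is not $-\infty$.)

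For the upper bound I would deliberately avoid arguing with $\DD$ directly. For a fixed $\delta$ the set $\Map(d,F,\delta,s)$ typically has full covering dimension inside $X^{[n]}$ — already for the cyclic approximation of $\Z$ the approximately-invariant configurations fill a full-dimensional ellipsoid around the constants — and it is only after letting $\delta\to 0$ that the ``fat'' directions become a vanishing fraction of the total. A hands-on estimate of $\DD(\alpha^{[n_i]}|_{\Map})/n_i$ is therefore delicate. Instead I would route through the sofic metric mean dimension. First, Proposition \ref{thm:trivial_htop_0} gives $h_\Sigma(X,G)=0$. Using the scale decomposition of Remark \ref{rem:top_ent_metric}, $h_\Sigma(X,G)=\lim_{\epsilon\to 0}h^{\epsilon}_{\Sigma}(X,G,d)$, where $h^{\epsilon}_{\Sigma}$ is nondecreasing as $\epsilon\downarrow 0$ and is $\ge 0$ because $\Map(d,F,\delta,s)\neq\emptyset$; hence $0\le h^{\epsilon}_{\Sigma}(X,G,d)\le h_\Sigma(X,G)=0$, so $h^{\epsilon}_{\Sigma}(X,G,d)=0$ for every $\epsilon>0$.

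Next I would invoke the two comparisons from Li's theory (\cite{L12}), the sofic counterparts of the classical inequalities of Lindenstrauss and Weiss (\cite{LW}): (i) the domination of sofic mean dimension by sofic metric mean dimension, $\mdim_\Sigma(X,G)\le \mmdim(X,G,d)$; and (ii) the identification of the metric mean dimension with the entropy-per-scale rate, $\mmdim(X,G,d)\le \varlimsup_{\epsilon\to 0} h^{\epsilon}_{\Sigma}(X,G,d)/\log(1/\epsilon)$. Substituting $h^{\epsilon}_{\Sigma}(X,G,d)=0$ for all $\epsilon>0$ yields $\mmdim(X,G,d)=0$, and therefore $\mdim_\Sigma(X,G)\le 0$. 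Combined with the lower bound this gives $\mdim_\Sigma(X,G)=0$.

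The main obstacle is precisely steps (i)–(ii): neither the domination $\mdim_\Sigma\le\mmdim$ nor the identification of $\mmdim$ with the entropy rate is developed in this excerpt, so the argument rests on importing these comparisons from \cite{L12}. If one insisted on a self-contained proof avoiding metric mean dimension, the crux would move to showing directly that $\varlimsup_i \DD(\alpha^{[n_i]}|_{\Map(d,F,\delta,s_i)})/n_i\to 0$ as $\delta\to 0$; the honest quantitative content there — that the approximately $s$-invariant configurations span only an $O(\delta)$-fraction of independent directions at any fixed resolution — is exactly what the metric mean dimension comparison packages, which is why I would prefer to cite it rather than reprove it.
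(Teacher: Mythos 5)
Your proof is correct and takes essentially the same route as the paper's: the lower bound via constant maps $\varphi\equiv x_0$ lying in $\Map(d,F,\delta,s)$, and the upper bound by passing through sofic metric mean dimension, using Proposition \ref{thm:trivial_htop_0} and Remark \ref{rem:top_ent_metric} to control $h^{\epsilon}_{\Sigma}(X,G,d)$ and then \cite[Theorem 6.1]{L12} for $\mdim_\Sigma(X,G)\leq \mmdim(X,d)=0$. The only cosmetic difference is that the paper needs just uniform boundedness of $h^{\epsilon}_{\Sigma}(X,G,d)$ for small $\epsilon$ rather than your exact vanishing, since division by $\log(1/\epsilon)$ already forces $\mmdim(X,d)=0$.
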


\begin{proof}
The proof of Proposition \ref{thm:trivial_htop_0} indicates $\Map(d,F,\delta,s)\neq \emptyset$. Thus $\mdim_\Sigma (X,G)\geq 0$. 
 Again using Proposition \ref{thm:trivial_htop_0} and Remark \ref{rem:top_ent_metric}, we obtain that $h^{\epsilon}_{ \Sigma} (X,G, d)$ is uniformly bounded from above for small enough $\epsilon>0$. Thus the \textit{sofic metric mean dimension} (\cite[Definition 4.1]{L12}) of  $\act$ defined as
  $$
  \mmdim(X,d)=\liminf_{\epsilon\rightarrow 0}{-\frac{h^{\epsilon}_{ \Sigma} (X,G, d)}{\log \epsilon}}
  $$
 equals zero. By \cite[Theorem 6.1]{L12}, $0\leq \mdim_\Sigma (X,G)\leq \mmdim(X,d)=0$. We conclude  $\mdim_\Sigma (X,G)=0$ as desired.    
\end{proof}

Let $\act$ and action, $F\in \mathcal{F}(G)$ and $\alpha$ a finite open cover of $X$. We define
	
	$$\alpha_F = \bigvee_{g \in F}g^{-1}\alpha.$$
 In particular, given a t.d.s., it holds that $\alpha_{[n]}=\bigvee_{i=0}^{n-1} T^{-i}\alpha. $
We now define the classic notion of  mean dimension introduced by Gromov (\cite{G}) and studied  by Lindenstrauss and Weiss (\cite{LW}).

\begin{deff}Let $(X,T)$ be a t.d.s. and $\alpha$ a finite open cover of $X$. Define the \textbf{mean dimension of $(X,T)$ with respect to $\alpha$} as$$ \mdim (X,T,\alpha)=\lim_{n\rightarrow \infty}\frac{\DD( \alpha_{[n]})}{n}.$$ The \textbf{mean dimension} of $(X,T)$ is the supremum of $\mdim (X,T,\alpha)$ over all finite open covers. \end{deff}

The previous notion was naturally generalized for actions of amenable groups. 

\begin{deff}
\label{def:amenable}
Let $G$ be an amenable group, $\act$ an action, and $\alpha$ a finite open cover of $X$. We define the \textbf{mean dimension of $\act$ with respect to $\alpha$} as
$$ \mdim (X,G,\alpha)=\lim_{F}\frac{\DD( \alpha_{F})}{|F|},$$
where the limit is taken as $F$ becomes more and more invariant, that is, there is $L\in \mathbb{R}$ so that for every $\epsilon>0$, there exist $A\in \mathcal{F}(G)$ and $\delta > 0$ such that $$|\frac{\DD( \alpha_{F})}{|F|}- L| < \epsilon$$
for every $F\in \mathcal{F}(G)$ with $|{g \in F : Ag \subseteq F}| \geq (1-\delta)|F|$. The existence of the limit follows from \cite[Theorem 6.1]{LW}.
The \textbf{mean dimension of }$\act$ is the supremum of $\mdim (X,G,\alpha)$ over all finite open covers and is denoted by $\mdim(X,G)$. 

\end{deff}

For a proof of the following result see \cite[Theorem 3.1]{L12}). 
\begin{thm}
[Li]
\label{thm:amenable}
    Let $G$ be an amenable group, $\Sigma$ a sofic approximation sequence for $G$ and $\act$ an action. 
    Then $\mdim (X,G)= \mdim_{\Sigma} (X,G)$. 
\end{thm}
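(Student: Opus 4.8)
The plan is to prove the two inequalities $\mdim_{\Sigma}(X,G) \le \mdim(X,G)$ and $\mdim_{\Sigma}(X,G) \ge \mdim(X,G)$ separately, working one finite open cover $\alpha$ at a time and reducing everything to comparing the ``local'' quantity $\DD(\alpha_F)/|F|$ for F{\o}lner sets $F$ with the ``microstate'' quantity $\DD(\alpha^{[n_i]}|_{\Map(d,F,\delta,s_i)})/n_i$. The bridge between the two pictures is the Ornstein--Weiss quasi-tiling machinery: since $G$ is amenable, for any prescribed invariance and $\epsilon>0$ one fixes a F{\o}lner set $F$ so that $\DD(\alpha_F)/|F|$ is within $\epsilon$ of $\mdim(X,G,\alpha)$, and then, for all large $i$, the sofic approximation $s_i\colon G\to\Sym(n_i)$ allows one to partition all but an $\epsilon$-fraction of $[n_i]$ into ``tiles'' $s_i(c)F$ indexed by a set of centers $C\subset[n_i]$ on which $s_i$ behaves approximately like the left action of $F$.

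For the upper bound, I would take $\varphi\in\Map(d,F',\delta,s_i)$ for a sufficiently large $F'\supseteq F$ and small $\delta$. On each tile the restriction of $\varphi$ approximately agrees with the partial orbit $(g\,x)_{g\in F}$ of some point $x\in X$ read through $\alpha$, so the portion of $\alpha^{[n_i]}$ seen on that tile refines, up to the metric error absorbed into $\delta$, a cover whose $\DD$ is at most $\DD(\alpha_F)$. Using the product decomposition $\alpha^{[n_i]}=\bigvee_i \pi_i^{-1}(\alpha)$ together with the subadditivity of $\DD$ (Lemma \ref{lem:subadd}, Proposition \ref{prop:mdim_nD}), one bounds the covering dimension tile by tile, estimating the leftover $\epsilon$-fraction of coordinates trivially by $\DD(\alpha)\le\ord(\alpha)$, to obtain
\begin{equation*}
\DD\big(\alpha^{[n_i]}|_{\Map(d,F',\delta,s_i)}\big)\le |C|\,\DD(\alpha_F)+\epsilon\, n_i\,\ord(\alpha).
\end{equation*}
Dividing by $n_i$, using $|C|\,|F|\le n_i$, and letting $i\to\infty$ and then $F'\to\infty$, $\delta\to 0$ yields $\mdim_{\Sigma}(\alpha)\le \DD(\alpha_F)/|F|+\epsilon\,\ord(\alpha)\le\mdim(X,G,\alpha)+O(\epsilon)$.

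For the lower bound I would run the construction in reverse: a finite open cover $\beta\succ\alpha_F$ realizing $\ord(\beta)=\DD(\alpha_F)$ is used to manufacture microstates. Given local orbit data on each tile, one glues it along the quasi-tiling to produce maps $\varphi\colon[n_i]\to X$ that lie in $\Map(d,F',\delta,s_i)$ (the gluing succeeds precisely because $s_i$ is almost multiplicative on the tiles), and the resulting family has covering dimension at least $|C|\,\DD(\alpha_F)-\epsilon n_i\,\ord(\alpha)$. This forces $\mdim_{\Sigma}(\alpha)\ge\mdim(X,G,\alpha)-O(\epsilon)$; taking the supremum over $\alpha$ and sending $\epsilon\to 0$ completes the argument. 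The main obstacle is the geometric heart of both directions: converting the sofic approximation into a genuine F{\o}lner tiling with controlled boundary, and then transferring the dimension estimate across it. Because mean dimension is a covering-dimension rather than a counting invariant, one cannot simply multiply cardinalities of separated sets as in the entropy proof; the tile-by-tile estimate must instead be carried out through the subadditivity and monotonicity of $\DD$ under joins and refinements, and one must verify that the unavoidable boundary and ``bad set'' errors contribute only a term proportional to $\epsilon\,\ord(\alpha)$, which vanishes in the limit.
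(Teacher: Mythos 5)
First, note that the paper does not actually prove this theorem: it attributes the statement to Li and refers to \cite[Theorem 3.1]{L12} for the proof, so the comparison here is with Li's argument rather than with anything in the present text. Your upper-bound half ($\mdim_{\Sigma}(X,G)\le\mdim(X,G)$) follows the standard route: quasi-tile $[n_i]$ by F{\o}lner tiles via the Ornstein--Weiss machinery adapted to sofic approximations, bound $\DD\big(\alpha^{[n_i]}|_{\Map(d,F',\delta,s_i)}\big)$ tile by tile using subadditivity of $\DD$ under joins and monotonicity under pullback, and absorb the bad set into an $\epsilon\,\ord(\alpha)$ error. Modulo the usual Lebesgue-number/shrunken-cover device needed to convert the $\ell^2$-closeness of a microstate to an orbit segment into an actual refinement statement between open covers (which you wave at with ``absorbed into $\delta$''), this direction is sound and is essentially how the cited proof handles it.

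The lower bound, however, contains a genuine gap at its central step. Your glued family of microstates is (up to the boundary error) a homeomorphic copy of $X^C$ carrying the product cover $\prod_{c\in C}\alpha_F$, so the claim that it ``has covering dimension at least $|C|\,\DD(\alpha_F)-\epsilon n_i \ord(\alpha)$'' amounts to the superadditivity $\DD(\gamma_1\times\gamma_2)\ge\DD(\gamma_1)+\DD(\gamma_2)$ for products of covers. This is false in general. Indeed, for compact metric spaces $\dim Y=\sup_{\gamma}\DD(\gamma)$ over finite open covers; product covers $\alpha\times\beta$ are cofinal under refinement (by a Lebesgue-number argument in the max metric), and $\DD$ is monotone under refinement (Remark \ref{rem:monotonicity}); hence superadditivity would force $\dim(X\times Y)\ge \dim X+\dim Y$ for all compacta, contradicting the classical Pontryagin surfaces, where $\dim(\Pi_p\times\Pi_q)=3$ while each factor has dimension $2$ (Boltyanski\u{\i} even produced a $2$-dimensional compactum $B$ with $\dim(B\times B)=3$, so restricting to powers of one space and one cover does not rescue the step). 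This is precisely the recognized hard point of lower bounds in (sofic) mean dimension: it is why Li's computation $\mdim_{\Sigma}([0,1]^d)^G=d$ (\cite[Theorem 7.1]{L12}, quoted here as Example \ref{example_d}) requires a separate algebraic-topological argument for the cube rather than a naive tile-by-tile count, and it is consonant with the known strict subadditivity of mean dimension under products. Consequently the inequality $\mdim_{\Sigma}(X,G)\ge\mdim(X,G)$ cannot be extracted from the subadditivity/monotonicity calculus of $\DD$ alone, and your proposal would need to replace this step by a genuine argument, as Li's proof of \cite[Theorem 3.1]{L12} does; the symmetry you suggest between the two directions (``run the construction in reverse'') does not exist, because $\DD$ is subadditive but not superadditive.
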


It is known that if an action of an amenable group has positive mean dimension, then it must have positive topological entropy \cite{LW}. It is not known if the result holds at the local level (the following question is also open for actions of $\Z$). 

\begin{question}
\label{ques:local}
 Let $\act$ an action and $\Sigma$ a sofic approximation sequence for $G$. Suppose $\alpha$ is a finite open cover of $X$ with $$\h_{\Sigma}(X,T,\alpha)=0.$$ Is 
it true that $\mdim_{\Sigma} (\alpha)=0?$
\end{question}

\subsection{The universal zero mean dimension factor}

Lindenstrauss introduced \textit{a} universal zero mean dimension factor of topological dynamical systems, this notion can be generalized for actions of sofic groups.

Let $\Sigma$ be a sofic approximation sequence for $G$, $\act$ an action, and $\R_u$ the biggest $G$-invariant closed equivalence relation which is contained in all $G$-invariant closed equivalence relations in $X$ that induce quotients with non-positive sofic mean dimension. 
Formally
\[
\R_u=\bigcap \{R:R=\overline{R} \text{ is a $G$-invariant eq. relation with }\mdim_{\Sigma}(G,X/ R)\leq 0\}.
\]
Note that there exists at least one $G$-invariant closed equivalence relation in $X$ that induces a quotient with non-positive sofic mean dimension, namely $R=X\times X$. 
Furthermore, if $\mdim_\Sigma (X,G)\leq 0$, then $\R_u=\Delta_X$.
Define $\X_u=X/\R_u$.

Using Proposition \ref{prop:factor-eq_rel} we conclude that
\[
\R_u=\bigcap\{R_{\pi}:\pi \text{ is a factor map and }\mdim_{\Sigma}(G,\pi(X))\leq 0\}.
\]
From this formula the next observation follows. 
\begin{lemma}
\label{lem:univ_sep}
Let $\Sigma$ be a sofic approximation sequence for $G$ and $\act$ an action. Then $(x,y)\in \R_u$ if and only if $(x,y)$ cannot be separated by a factor with non-positive sofic mean dimension. 
\end{lemma}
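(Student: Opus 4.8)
The plan is to prove the biconditional in Lemma~\ref{lem:univ_sep} by unraveling the definition of $\R_u$ via the reformulation given just above the statement, namely
\[
\R_u=\bigcap\{R_{\pi}:\pi \text{ is a factor map and }\mdim_{\Sigma}(G,\pi(X))\leq 0\}.
\]
First I would make precise the phrase ``$(x,y)$ cannot be separated by a factor with non-positive sofic mean dimension.'' By definition, a factor map $\pi\colon \act \to G\curvearrowright Y$ \emph{separates} $(x,y)$ if $\pi(x)\neq\pi(y)$; so the clause ``cannot be separated by a factor with non-positive mean dimension'' means: for every factor map $\pi$ with $\mdim_\Sigma(G,\pi(X))\leq 0$, we have $\pi(x)=\pi(y)$. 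This is the reformulation target.

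The core of the argument is then essentially a translation between $\pi(x)=\pi(y)$ and membership in $R_\pi$. By Proposition~\ref{prop:factor-eq_rel}, for any factor map $\pi$ the set $R_\pi=\{(x,x')\mid \pi(x)=\pi(x')\}$ is a closed $G$-invariant equivalence relation, and $\pi(x)=\pi(y)$ holds if and only if $(x,y)\in R_\pi$. Using this, the statement ``$\pi(x)=\pi(y)$ for every factor map $\pi$ with $\mdim_\Sigma(G,\pi(X))\leq 0$'' is literally the statement ``$(x,y)\in R_\pi$ for every such $\pi$,'' which by the displayed intersection formula is exactly ``$(x,y)\in\R_u$.'' For the forward direction, if $(x,y)\in\R_u$ then $(x,y)$ lies in $R_\pi$ for every factor map $\pi$ with $\mdim_\Sigma(G,\pi(X))\leq 0$, hence $\pi(x)=\pi(y)$ for all such $\pi$, so $(x,y)$ cannot be separated. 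Conversely, if $(x,y)$ cannot be separated, then $\pi(x)=\pi(y)$ and thus $(x,y)\in R_\pi$ for every such $\pi$, whence $(x,y)$ belongs to the intersection, i.e.\ $(x,y)\in\R_u$.

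The only point that requires a small amount of care — and the step I would flag as the mild obstacle — is ensuring that the two indexing families in the intersection formula genuinely coincide, i.e.\ that passing from ``$G$-invariant closed equivalence relations $R$ with $\mdim_\Sigma(G,X/R)\leq 0$'' to ``factor maps $\pi$ with $\mdim_\Sigma(G,\pi(X))\leq 0$'' is a faithful bijection at the level of the relations being intersected. This is precisely the content of Proposition~\ref{prop:factor-eq_rel}: every closed $G$-invariant equivalence relation $Q$ arises as $R_\pi$ for the quotient factor map $\pi\colon X\to X/Q$ with $\pi(X)=X/Q$, and conversely every factor map yields such a relation, so the two families produce the same collection of subsets of $X\times X$ and hence the same intersection. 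Since this equivalence is already invoked to derive the second displayed formula for $\R_u$ in the excerpt, the lemma follows immediately, and no further estimates or limiting arguments are needed.
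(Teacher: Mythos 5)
Your proof is correct and follows the same route the paper intends: the paper derives the intersection formula $\R_u=\bigcap\{R_\pi : \pi \text{ a factor map with } \mdim_\Sigma(G,\pi(X))\leq 0\}$ from Proposition~\ref{prop:factor-eq_rel} and then states that the lemma ``follows from this formula,'' which is exactly the unraveling you carry out (membership in every $R_\pi$ being literally the condition $\pi(x)=\pi(y)$ for every such factor). Your added care about the two indexing families coinciding via Proposition~\ref{prop:factor-eq_rel} (together with conjugacy-invariance of $\mdim_\Sigma$, Remark~\ref{mdim_conj}, so the condition depends only on the relation $R_\pi$) is precisely the implicit content the paper leaves to the reader.
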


\noindent
When $\mdim_\Sigma (X,G)\geq 0$, the map $\pi\colon \act \rightarrow G\curvearrowright \X_u$ is called the \textbf{universal zero sofic mean dimension factor}\footnote{The reason one requires $\mdim_\Sigma (X,G)\geq 0$ is to be able to prove $\mdim_{\Sigma}(\X_u,G)=0$ (see Proposition \ref{prop:uzmdf}). Otherwise it might hold  $\mdim_{\Sigma}(\X_u,G)=-\infty$.} of $\act$. The following result is a generalization of \cite[Proposition 6.12]{L99}, for a proof see \cite[Proposition 2.14]{L12}.  
\begin{prop}
[Lindenstrauss; Li]
\label{prop:uzmdf}
Let $\Sigma$ be a sofic approximation sequence for $G$ and $\act$ an action with $\mdim_\Sigma (X,G)\geq 0$. Then $$\mdim_{\Sigma}(\X_u,G)=0$$ and every factor map onto a zero sofic mean dimension system $$\psi: \act \rightarrow G\curvearrowright Y$$ factors through $\pi$, i.e.\ there exists a factor map $\phi:G\curvearrowright \X_u \rightarrow G\curvearrowright Y$  such that $\phi\circ\pi=\psi$. 
\end{prop}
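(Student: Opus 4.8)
The plan is to prove the two assertions separately. First I would establish that $\mdim_{\Sigma}(\X_u,G)=0$, and then prove the universal property that every factor onto a zero mean dimension system factors through $\pi$.

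For the universal property, the argument is essentially formal once the correct definition of $\R_u$ is in hand. Suppose $\psi\colon \act \to G\curvearrowright Y$ is a factor map with $\mdim_{\Sigma}(Y,G)\leq 0$. By Proposition \ref{prop:factor-eq_rel} the associated relation $R_\psi=\{(x,x')\mid \psi(x)=\psi(x')\}$ is a closed $G$-invariant equivalence relation, and since $X/R_\psi$ is conjugate to $Y$ (by Remark \ref{mdim_conj}, mean dimension is a conjugacy invariant), we have $\mdim_\Sigma(G,X/R_\psi)\leq 0$. Hence $R_\psi$ is one of the relations in the intersection defining $\R_u$, so $\R_u\subseteq R_\psi$. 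This inclusion says precisely that whenever $\pi(x)=\pi(x')$ we also have $\psi(x)=\psi(x')$, which is exactly the condition needed to factor $\psi$ through $\pi$: define $\phi\colon \X_u\to Y$ by $\phi(\pi(x))=\psi(x)$. The inclusion $\R_u\subseteq R_\psi$ makes $\phi$ well defined, and one checks continuity (via compactness of $\X_u$ and the quotient topology), surjectivity, and $G$-equivariance routinely; by construction $\phi\circ\pi=\psi$.

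The main obstacle, and the genuine content, is proving $\mdim_{\Sigma}(\X_u,G)=0$. The inequality $\mdim_\Sigma(\X_u,G)\geq 0$ should come from the hypothesis $\mdim_\Sigma(X,G)\geq 0$ together with the fact that $\X_u$ is a nontrivial factor receiving $X$; more carefully, since $\mdim_\Sigma(X,G)\geq 0$ one rules out the degenerate value $-\infty$ and obtains $\mdim_\Sigma(\X_u,G)\geq 0$ (this is exactly what the footnote in the excerpt flags). The upper bound $\mdim_\Sigma(\X_u,G)\leq 0$ is the crux. The relation $\R_u$ is defined as an intersection $\bigcap_i R_i$ of closed $G$-invariant equivalence relations, each with $\mdim_\Sigma(G,X/R_i)\leq 0$, and the difficulty is that an infinite intersection of relations, each giving a zero-mean-dimension quotient, need not obviously give a zero-mean-dimension quotient. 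I would expect the proof to proceed by a compactness/approximation argument: any finite open cover $\alpha$ of $\X_u$ (pulled back to $X$) is refined by a cover measurable with respect to finitely many of the $R_i$, i.e.\ by a cover pulled back from the quotient $X/(R_{i_1}\cap\cdots\cap R_{i_k})$. The key finite step is then that a finite intersection $R_{i_1}\cap\cdots\cap R_{i_k}$ still yields a quotient of mean dimension $\leq 0$; this is the analogue, in the mean dimension setting, of the fact that a join of two zero-entropy (here zero-mean-dimension) factors is again zero, and should follow from the subadditivity in Lemma \ref{lem:basic}(2): covers adapted to the intersection relation are joins of covers adapted to the individual $R_{i_j}$, and $\mdim_\Sigma(\alpha\vee\beta)\leq \mdim_\Sigma(\alpha)+\mdim_\Sigma(\beta)=0$. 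Passing from finite intersections to the full intersection $\R_u$ then requires that for each cover $\alpha$ of $\X_u$ the mean dimension $\mdim_\Sigma(\alpha)$ is already captured at some finite stage, which is where I expect the technical heart of the argument to lie, likely invoking the Lebesgue-number/refinement structure of covers on the inverse-limit-type space $\X_u=X/\bigcap_i R_i$. Since the statement is attributed to Lindenstrauss and Li with an explicit reference (\cite[Proposition 2.14]{L12}), I would model the finite-intersection-plus-approximation scheme on their proof rather than reprove the subadditive machinery from scratch.
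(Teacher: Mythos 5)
The paper itself contains no proof of this proposition: it attributes the result to Lindenstrauss and Li and refers the reader to \cite[Proposition 2.14]{L12} (the $\Z$-case being \cite[Proposition 6.12]{L99}). Measured against that cited argument, your proposal follows essentially the same route. The universal-property half of your proof is complete and correct as written: by Proposition \ref{prop:factor-eq_rel} and conjugacy-invariance of $\mdim_\Sigma$ (Remark \ref{mdim_conj}), the relation $R_\psi$ belongs to the family whose intersection defines $\R_u$, so $\R_u\subseteq R_\psi$, and the induced map $\phi$ is well defined, continuous, surjective and equivariant. Your treatment of the lower bound $\mdim_\Sigma(\X_u,G)\geq 0$ via the remark that factors of actions with $\mdim_\Sigma(X,G)\geq 0$ again have nonnegative mean dimension is also exactly right, and is indeed the point of the paper's footnote.

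For the upper bound your outline (finite intersections via subadditivity, then a Lebesgue-number/finite-stage approximation) is the Lindenstrauss--Li scheme, but two steps you leave implicit are genuinely needed before it applies. First, $\R_u$ is an intersection over an a priori \emph{uncountable} family, so there are no ``finite stages'' to approximate through until one reduces to countably many relations: since $X\times X$ is second countable, the open set $(X\times X)\setminus \R_u$ is Lindel\"of, so countably many $R_i$ already intersect to $\R_u$, and ordering their finite intersections decreasingly exhibits $\X_u$ as an inverse limit of the factors $X/(R_1\cap\cdots\cap R_n)$; your notation $\bigcap_i R_i$ quietly assumes this. Second, both the finite-join step and the finite-stage refinement require the inequality $\mdim_\Sigma(\pi^{-1}\beta)\leq \mdim_\Sigma(\beta)$ for factor maps and \emph{arbitrary} finite open covers $\beta$; the paper's Lemma \ref{lem:factor} establishes this only for two-element covers, so one must invoke the general version from \cite{L12} (built on its Lemma 2.10), as you in effect do by deferring to Li. Relatedly, your phrase ``covers adapted to the intersection relation are joins of covers adapted to the individual $R_{i_j}$'' is not literally correct; the standard repair is to embed $X/(R_{i_1}\cap\cdots\cap R_{i_k})$ equivariantly into the product $\prod_j X/R_{i_j}$, refine a given cover of the image by a box cover using a Lebesgue number, and then combine Lemma \ref{lem:basic}(2) with the pullback inequality. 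With these repairs your proposal is a faithful rendering of the cited proof rather than a new argument.
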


\begin{rem}
Let $G\curvearrowright Y$ be a factor of $\act$. If $\mdim_{\Sigma}(X,G)\geq 0$ then $\mdim_{\Sigma}(Y,G)\geq 0$ \cite[page 9]{L12}.    
\end{rem}

\subsection{Local theory of entropy}
 Blanchard \cite{blanchard1992fully, blanchard1993disjointness} introduced the following notions for topological dynamical systems. We state the sofic entropy versions. 
 
 \begin{deff} Let $\act$ an action and $\Sigma$ a sofic approximation sequence for $G$. We say that $\act$ has \textbf{completely positive sofic entropy (sofic CPE)} if every non-trivial factor has positive sofic topological entropy with respect to $\Sigma$. 
     
 \end{deff}
\begin{deff}\label{def:standard}
An open cover of $X$ is \textbf{standard} if it is composed of two non-dense open sets. An open cover $(U,V)$ \textbf{distinguishes} $(x,y)\in X\times X$ if $x\notin \overline{V}$ and $y\notin \overline{U}$. Such a cover is always standard. Conversely, every standard cover distinguishes some $(x,y)\in X\times X$.
\end{deff}

 \begin{deff}  Let $\act$ be an action and $\Sigma$ a sofic approximation sequence for $G$. We say that $\act$ has \textbf{uniform positive sofic entropy (sofic UPE)} if 
 $$
 h_{\Sigma}(X,G,\alpha)>0
 $$
 for every standard open cover, $\alpha$, of $X$.
     
 \end{deff}

\begin{deff} 
Let $\act$ be an action and $\Sigma$ a sofic approximation sequence for $G$. We say that $(x,y)\in X\times X \setminus \Delta_X$ is a \textbf{sofic entropy pair} if for every standard open cover $\alpha$ which distinguishes $(x,y)$ it holds that $h_{\Sigma}(X,G,\alpha)>0$. Denote the sofic entropy pairs by $\E_{\Sigma}(X,G)$. 
\end{deff}

The following result was obtained in \cite[Proposition 4.16 and Remark 4.4]{kerr2013combinatorial}. 
\begin{thm}
[Kerr and Li]
\label{thm:sofic IE properties} Let $G$ be a sofic group, $\Sigma$ a sofic approximation sequence for $G$, and $\act$, $G\curvearrowright Y$ continuous group actions. Then
    \begin{enumerate}
  \item $h_{\Sigma}(X,G)>0$ if and only if $\E_{\Sigma}(X,G)\neq \emptyset$. 
        \item Let $\pi\colon \act \to G\curvearrowright Y$ be a factor map. If $(x,y)\in \E_{\Sigma}(X,G)$ and $\pi(x)\neq \pi (y)$ then $(\pi(x),\pi(y))\in \E_{\Sigma}(Y,G).$
        
    \end{enumerate}
    
\end{thm}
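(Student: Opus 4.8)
The plan is to treat the three implications separately: the backward direction of (1) and the pushforward (2) admit direct, self-contained arguments, whereas the forward direction of (1) is the substantive one and is where I expect the real difficulty to lie; it is handled, following Kerr and Li, through the combinatorial-independence machinery.

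First I would dispose of the backward implication of (1). Given $(x,y)\in\E_{\Sigma}(X,G)$ we have $x\neq y$, so setting $\epsilon=d(x,y)/3$ and $U=X\setminus\overline{B}_{\epsilon}(y)$, $V=X\setminus\overline{B}_{\epsilon}(x)$ produces a standard open cover: the closed balls $\overline{B}_{\epsilon}(x)$ and $\overline{B}_{\epsilon}(y)$ are disjoint, so $(U,V)$ covers $X$, both members are non-dense, and $x\notin\overline{V}$, $y\notin\overline{U}$ show that $(U,V)$ distinguishes $(x,y)$. The entropy-pair property then gives $h_{\Sigma}(X,G)\geq h_{\Sigma}(X,G,(U,V))>0$.

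For (2) I would argue directly, without passing through IE-tuples. Let $\beta=(U',V')$ be a standard cover of $Y$ distinguishing $(\pi(x),\pi(y))$ and set $\alpha=\pi^{-1}(\beta)=(\pi^{-1}(U'),\pi^{-1}(V'))$. Since $\pi$ is continuous and surjective, $\overline{\pi^{-1}(U')}\subseteq\pi^{-1}(\overline{U'})\neq X$, so $\alpha$ is standard, and the same inclusion applied to $V'$ shows that $\alpha$ distinguishes $(x,y)$; hence $h_{\Sigma}(X,G,\alpha)>0$ by hypothesis. It then suffices to establish the monotonicity $h_{\Sigma}(X,G,\pi^{-1}\beta)\leq h_{\Sigma}(Y,G,\beta)$. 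Writing $\pi^{[n]}\colon X^{[n]}\to Y^{[n]}$ for the product map, the identity $N((\pi^{-1}\beta)^{[n]},S)=N(\beta^{[n]},\pi^{[n]}(S))$ reduces everything to the containment $\pi^{[n]}(\Map(d_X,F,\delta,s))\subseteq\Map(d_Y,F,\delta',s)$ for $\delta$ small relative to $\delta'$. This is the only delicate point, since the defining condition is an $\ell^{2}$ average while $\pi$ is merely uniformly continuous: a Chebyshev estimate bounds by $n\delta^{2}/\eta^{2}$ the number of coordinates $v$ with $d_X(\varphi(s_g v),g\varphi(v))>\eta$, on the remaining coordinates uniform continuity of $\pi$ forces $d_Y(\pi\varphi(s_g v),\pi(g\varphi(v)))<\epsilon$, and equivariance gives $\pi(g\varphi(v))=g\,\pi\varphi(v)$; choosing $\epsilon$, then $\eta$, then $\delta$ appropriately drives the resulting $d_Y$-average below $\delta'$. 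Passing to $\limsup_i \tfrac{1}{n_i}\log$ and then to the infima over $F$ and $\delta$ yields the monotonicity, so $h_{\Sigma}(Y,G,\beta)>0$ and $(\pi(x),\pi(y))\in\E_{\Sigma}(Y,G)$.

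The forward implication of (1) is the heart of the matter and the main obstacle. Starting from a finite open cover $\gamma$ with $h_{\Sigma}(X,G,\gamma)>0$, the exponential growth of $N(\gamma^{[n_i]},\Map(d,F,\delta,s_i))$ produces exponentially many approximately equivariant configurations that are mutually separated by $\gamma$; feeding these into a Sauer–Shelah (Karpovsky–Milman) type lemma extracts, after shrinking $\gamma$ to a pair of disjoint closed sets, a positive-density family of coordinates on which that pair is independent, i.e.\ a pair of positive independence density, hence a non-diagonal IE-pair. A compactness argument along a refining sequence of covers, using that independence density localizes to points and that the set of IE-pairs is closed, then extracts an actual pair $(x,y)\in X\times X\setminus\Delta_X$; one finally checks that IE-pairs are entropy pairs, giving $\E_{\Sigma}(X,G)\neq\emptyset$. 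I expect this localization — converting a global exponential count into combinatorial independence for a single pair via Sauer–Shelah and then extracting the limiting pair by compactness — to be the genuinely hard step, while the $\ell^{2}$ control used in (2) is the only other technical wrinkle.
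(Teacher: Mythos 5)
Your proposal is correct in substance, but it is worth spelling out what it is being compared to: the paper does not prove this theorem at all — it imports it wholesale from Kerr and Li \cite{kerr2013combinatorial} (Proposition 4.16 and Remark 4.4). What your write-up actually matches, almost step for step, is the paper's own development of the \emph{mean-dimension analogues}. Your backward direction of (1) — building a standard cover $(X\setminus \overline{B}_\epsilon(y), X\setminus \overline{B}_\epsilon(x))$ distinguishing the pair — is exactly the argument of Corollary \ref{cor:zmd}; your treatment of (2) — checking via $\overline{\pi^{-1}(U')}\subseteq\pi^{-1}(\overline{U'})$ that the pullback cover is standard and distinguishing, and then proving the monotonicity $h_{\Sigma}(X,G,\pi^{-1}\beta)\leq h_{\Sigma}(Y,G,\beta)$ by pushing microstates forward — is the entropy version of Lemma \ref{lem:factor} and Proposition \ref{prop:factor1}, and your Chebyshev-plus-uniform-continuity estimate for $\pi_{[n]}(\Map(d_X,F,\delta,s))\subseteq\Map(d_Y,F,\delta',s)$ is precisely the proof of the cited Lemma \ref{lem:Li} (i.e.\ \cite[Lemma 2.10]{L12}); the order of quantifiers ($\delta$ chosen after $(F,\delta')$) is handled correctly. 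For the forward direction of (1), your outline faithfully describes Kerr--Li's actual route (relative Karpovsky--Milman extraction of positive independence density over the microstate spaces $\Map(d,F,\delta,s_i)$, localization to a pair by shrinking closed sets, non-diagonal sofic IE-pairs being entropy pairs), but as written it is a sketch rather than a proof, and the step you gloss — passing from a general finite cover with positive entropy to a \emph{pair} of disjoint closed sets — requires the tuple-level bookkeeping done in \cite{kerr2013combinatorial}. One thing your independence route obscures, and which the paper's own architecture makes visible, is that the heavy machinery is avoidable here: since $h_{\Sigma}(\cdot,\alpha)$ is monotone under refinement and subadditive under joins (the counting inequalities $N(\alpha^{[n]},S)\leq N(\beta^{[n]},S)$ for $\beta\succ\alpha$ and $N((\alpha\vee\beta)^{[n]},S)\leq N(\alpha^{[n]},S)\,N(\beta^{[n]},S)$ hold with $S=\Map(d,F,\delta,s_i)$, exactly as $\DD$ behaves in Lemma \ref{lem:basic}), the Blanchard-style splitting and nested-shrinking arguments of Propositions \ref{prop:standard_pos_mdim} and \ref{prop:mdimpair} transfer verbatim to sofic entropy and yield a self-contained elementary proof that positive $h_{\Sigma}$ produces a pair in $\E_{\Sigma}(X,G)$; the independence approach buys stronger structural information about IE-pairs, but it is not needed for the bare existence statement.
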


The following result is a natural generalization of a result presented in \cite{blanchard1993disjointness} within the context of sofic entropy. It is known, but it does not appear explicitly in the literature.
\begin{corollary}
    Let $G$ be a sofic group and $\Sigma$ a sofic approximation sequence for $G$. An action $\act$ has sofic CPE if the smallest $G$-invariant closed equivalence relation that contains $\E_{\Sigma}(X,G)$ is $X\times X$. 
\end{corollary}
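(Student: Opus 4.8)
The plan is to prove the statement directly by showing that every non-trivial factor carries a sofic entropy pair. Fix an arbitrary non-trivial factor map $\pi\colon \act \to G\curvearrowright Y$, so that $R_\pi\neq X\times X$. By Theorem \ref{thm:sofic IE properties}(1), it suffices to show $\E_{\Sigma}(Y,G)\neq\emptyset$, since this is equivalent to $h_{\Sigma}(Y,G)>0$. So the whole task reduces to producing a single sofic entropy pair downstairs.

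The key step is to locate an entropy pair of $X$ that is \emph{not} collapsed by $\pi$. I claim there exists $(x,y)\in\E_{\Sigma}(X,G)$ with $\pi(x)\neq\pi(y)$. To see this, suppose toward a contradiction that no such pair exists; then every entropy pair is identified by $\pi$, i.e.\ $\E_{\Sigma}(X,G)\subseteq R_\pi$. By Proposition \ref{prop:factor-eq_rel}, $R_\pi$ is a $G$-invariant closed equivalence relation containing $\E_{\Sigma}(X,G)$. But the hypothesis asserts that the \emph{smallest} $G$-invariant closed equivalence relation containing $\E_{\Sigma}(X,G)$ is $X\times X$, forcing $R_\pi=X\times X$ and contradicting the non-triviality of $\pi$. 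This establishes the claim.

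Finally, given such a pair $(x,y)\in\E_{\Sigma}(X,G)$ with $\pi(x)\neq\pi(y)$, I would apply the functoriality of sofic entropy pairs: Theorem \ref{thm:sofic IE properties}(2) yields $(\pi(x),\pi(y))\in\E_{\Sigma}(Y,G)$. Hence $\E_{\Sigma}(Y,G)\neq\emptyset$, and Theorem \ref{thm:sofic IE properties}(1) gives $h_{\Sigma}(Y,G)>0$. Since $\pi$ was an arbitrary non-trivial factor, $\act$ has sofic CPE, as desired.

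I do not expect a serious obstacle here; the argument is a routine combination of the functoriality of entropy pairs (Theorem \ref{thm:sofic IE properties}) with the fact that $R_\pi$ is a closed $G$-invariant equivalence relation (Proposition \ref{prop:factor-eq_rel}). The only point requiring a little care is the logical bookkeeping around the word \emph{smallest}: one must recognize that $\E_{\Sigma}(X,G)\subseteq R_\pi$, together with $R_\pi$ being a closed $G$-invariant equivalence relation, immediately forces $R_\pi=X\times X$ once we assume the hypothesis, and that this is precisely what contradicts non-triviality of the factor.
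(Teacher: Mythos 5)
Your proof is correct and is essentially the paper's argument in contrapositive form: the paper assumes sofic CPE fails, obtains a closed $G$-invariant equivalence relation $R\neq X\times X$ with $h_{\Sigma}(X/R,G)\leq 0$, and shows $\E_{\Sigma}(X,G)\subset R$ using parts (2) and (1) of Theorem \ref{thm:sofic IE properties}, exactly the two ingredients you combine with Proposition \ref{prop:factor-eq_rel}. The logical content is identical, so there is nothing to flag.
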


\begin{proof}
If $\act$ does not have sofic CPE then there exists a closed $G$-invariant equivalence relation $R\neq X\times X$ so that $h_{\Sigma}(X/R,G)\leq 0$. We claim that the inequality implies that $\E_{\Sigma}(X,G)\subset R$. Otherwise by Theorem \ref{thm:sofic IE properties} (2), $E_{\Sigma}(X/R,G)\neq \emptyset$, and by Theorem \ref{thm:sofic IE properties} (1), $h_{\Sigma}(X/R,G)> 0$.
This concludes the proof. 
\end{proof}

In the sequel we prove analogous results in the context of mean dimension (see Corollary \ref{cor:zmd}, Proposition \ref{prop:factor1}, and Theorem \ref{thm:eqrelation}).
\section{Systems with completely/uniform positive mean dimension}\label{sec:cpmd}

\begin{deff}\label{def:CPMD}
Let $\act$ be an action and $\Sigma$ a sofic approximation sequence for $G$. We say that $\act$ has \textbf{completely positive sofic mean dimension (sofic CPMD)} if every non-trivial factor has positive sofic mean dimension. 
\end{deff}

The following result is a consequence of Lemma \ref{lem:univ_sep}. 
\begin{corollary}
\label{cor:cpmd_iff_umdz_trivial}

Let $\Sigma$ be a sofic approximation sequence for $G$ and $\act$ an action so that $|X|\geq 2$ and $\mdim_\Sigma (X,G)\geq 0$. The universal zero sofic mean dimension factor is trivial, equivalently $\R_u=X\times X$,  if and only if $\act$ has sofic CPMD. 
\end{corollary}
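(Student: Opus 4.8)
The plan is to prove the stated equivalence by treating ``the universal zero sofic mean dimension factor is trivial'' as the assertion that the action $G\curvearrowright \X_u$ on $\X_u=X/\R_u$ is trivial, and then passing to the equivalence-relation formulation through Lemma \ref{lem:univ_sep}. The two facts I would lean on are that, under the standing hypothesis $\mdim_\Sigma(X,G)\geq 0$, Proposition \ref{prop:uzmdf} gives $\mdim_\Sigma(\X_u,G)=0$ (so $\X_u$ is itself a factor of non-positive mean dimension), and that Proposition \ref{prop:uzmdf} also gives the universal property: every factor onto a zero sofic mean dimension system factors through $\pi\colon X\to \X_u$.

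For the implication that triviality of the universal factor yields sofic CPMD, I would argue by contradiction. Let $\psi\colon \act\to G\curvearrowright Y$ be a non-trivial factor and suppose $\mdim_\Sigma(Y,G)\leq 0$. Since a factor of a system with non-negative sofic mean dimension again has non-negative sofic mean dimension, $\mdim_\Sigma(Y,G)=0$, so by the universal property $\psi$ factors as $\phi\circ\pi$ for some factor map $\phi\colon G\curvearrowright\X_u\to G\curvearrowright Y$. As $G$ acts trivially on $\X_u$, every point of $Y=\phi(\X_u)$ is fixed, so $Y$ is trivial, contradicting the choice of $Y$. Hence every non-trivial factor has positive sofic mean dimension, i.e.\ $\act$ has sofic CPMD.

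For the reverse implication, I would again use Proposition \ref{prop:uzmdf} to note $\mdim_\Sigma(\X_u,G)=0\leq 0$. If the action on $\X_u$ were non-trivial, then $G\curvearrowright\X_u$ would be a non-trivial factor of $\act$ of non-positive sofic mean dimension, directly contradicting sofic CPMD. Therefore the action on $\X_u$ must be trivial. This direction is essentially immediate once Proposition \ref{prop:uzmdf} is in hand; the substance of the corollary lies in the previous direction, where the universal property is what forces an arbitrary low-dimensional factor to collapse through $\X_u$.

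The step I expect to require the most care is the translation of ``the action on $\X_u$ is trivial'' into the collapsing statement $\R_u=X\times X$. By Lemma \ref{lem:univ_sep}, $(x,y)\in\R_u$ precisely when no factor of non-positive sofic mean dimension separates $x$ and $y$, so $\R_u=X\times X$ is exactly the assertion that no factor of non-positive sofic mean dimension is non-constant. Matching this collapsing condition with the triviality of the action on $\X_u$ is where the argument must be made precise, and it is the point at which I would invoke Lemma \ref{lem:univ_sep} together with the universal property most carefully, since a general trivial factor and a one-point factor must be distinguished.
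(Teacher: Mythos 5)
There is a genuine gap, and it sits exactly at the point you flagged in your closing paragraph but left unresolved: your choice to read ``the universal zero sofic mean dimension factor is trivial'' as ``$G$ acts trivially on $\X_u$.'' The corollary itself disambiguates the word --- ``trivial, equivalently $\R_u=X\times X$'' --- so trivial means that $\X_u=X/\R_u$ is a \emph{single point}, consistent with the paper's usage elsewhere, where ``non-trivial'' means having more than one element (footnote to Corollary \ref{cor:upmd implies cpmd}) and where a factor carrying the \emph{identity} action on a nondegenerate space is explicitly treated as a non-trivial factor witnessing the failure of CPMD (end of the proof of Proposition \ref{cor:cpmd iff A countable}). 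Under your reading the forward implication is false as a claim about the corollary: take $X=\{0,1\}$ with the trivial $G$-action. Then $\mdim_\Sigma(X,G)=0$ by Proposition \ref{thm:trivial_mdim_0}, the identity factor $X\to X$ has zero mean dimension and separates the two points, so by Lemma \ref{lem:univ_sep} one gets $\R_u=\Delta_X$ and $\X_u=X$ with trivial action --- your hypothesis holds --- yet $X$ is itself a two-point factor of zero mean dimension, so sofic CPMD fails. Concretely, the step that breaks is ``every point of $Y=\phi(\X_u)$ is fixed, so $Y$ is trivial, contradicting the choice of $Y$'': a factor with identity action but $|Y|\geq 2$ is still a non-trivial factor in the paper's sense, so no contradiction is reached. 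Your reverse direction has the same defect in its wording: from CPMD you may only conclude that the \emph{action} on $\X_u$ is trivial, which, as the example shows, is strictly weaker than $\R_u=X\times X$.

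The repair is short, and your skeleton then survives. With trivial read as ``singleton,'' your forward direction becomes: if $Y$ is a factor with $|Y|\geq 2$ and $\mdim_\Sigma(Y,G)\leq 0$, then (as you correctly note) $\mdim_\Sigma(Y,G)=0$ since factors of a system of non-negative mean dimension have non-negative mean dimension, so by Proposition \ref{prop:uzmdf} the map factors as $\phi\circ\pi$; but $\X_u$ is a singleton, hence $Y=\phi(\X_u)$ is a singleton outright --- no fixed-point argument --- contradicting $|Y|\geq 2$. In the reverse direction, replace ``if the action on $\X_u$ were non-trivial'' by ``if $|\X_u|\geq 2$''; then $\pi\colon X\to\X_u$ with $\mdim_\Sigma(\X_u,G)=0$ contradicts CPMD, forcing $\R_u=X\times X$. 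Note that the paper obtains the corollary more directly, as an immediate consequence of Lemma \ref{lem:univ_sep}: $\R_u=X\times X$ holds if and only if no pair of distinct points is separated by a factor of non-positive mean dimension, which (since any separating factor has at least two points) is exactly the statement that every non-trivial factor has positive mean dimension. Your route through the universal property of Proposition \ref{prop:uzmdf} is a legitimate alternative for the forward direction once the reading of ``trivial'' is corrected, but as written the proposal proves a different (and, for this corollary, insufficient) statement.
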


\begin{deff}\label{def:UPMD}
Let $G$ be a sofic group. An action $\act$ has \textbf{uniform positive sofic mean dimension (sofic UPMD)} if $\mdim_{\Sigma} (\alpha)>0$ for every standard open cover $\alpha$.
\end{deff}

According to Corollary \ref{cor:upmd implies cpmd} in the sequel, every non-trivial action with uniform positive sofic mean dimension system has completely positive sofic mean dimension. The opposite direction does not hold, as the following example demonstrates. 

\begin{deff}
  Let $G$ be an amenable group and $\act$ an action.  We say $\act$ has \textbf{completely positive mean dimension (CPMD)} if every non-trivial factor has positive mean dimension. We say $\act$ has \textbf{uniform positive mean dimension (UPMD)} if $\mdim (\alpha)>0$ for every standard open cover $\alpha$.
\end{deff}

\begin{rem}
 Let $G$ be an amenable group and $\act$ an action. Since sofic mean dimension coincides with mean dimension (Theorem \ref{thm:amenable}) it holds that $\act$ has sofic CPMD if and only if it has completely positive mean dimension. We do not know, however, if sofic UPMD depends on the choice of the sofic approximation sequence.
    
\end{rem}
\begin{example}(\textit{Example of a system with CPMD but not with UPMD.})
Let $X=[0,1]^{\Z}\cup [1,2]^\Z$ equipped with the shift map $\sigma\colon X\rightarrow X$. Let $x\in [0,1]^{\Z}\setminus \{\vec{1}\}$ and $y\in [1,2]^{\Z}\setminus \{\vec{1}\}$ and fix a standard cover  $\alpha=(U,V)$  which distinguishes $(x,y)$  such that $[0,1]^{\Z}\subset U$ and $[1,2]^\Z\subset V$. Note that for all $n$, $[0,1]^{\Z}\subset\bigcap_{i=0}^{n-1} \sigma^{-i}U$ and $[1,2]^{\Z}\subset\bigcap_{i=0}^{n-1} \sigma^{-i}V$. Moreover, $\{\bigcap_{i=0}^{n-1} \sigma^{-i}U,\bigcap_{i=0}^{n-1} \sigma^{-i}V\}\subset \alpha_{[n]}$ and their union equals $X$. This implies $\{\bigcap_{i=0}^{n-1} \sigma^{-i}U,\bigcap_{i=0}^{n-1} \sigma^{-i}V\}\succ \alpha_{[n]}$. Thus, $\DD (\alpha_{[n]})=1$ for all $n$ which implies  $\mdim (X,\sigma,\alpha)=0$. Conclude $(X,\sigma)$ does not have uniform positive mean dimension. However, by Proposition \ref{cor:cpmd iff A countable} in the sequel, $(X,\sigma)$ has completely positive mean dimension.
\end{example}

\section{Mean dimension pairs}\label{sec:mdim_pairs}

\begin{deff}
\label{def:mdpairs}
 Let $\act$ be an action and $\Sigma$ a sofic approximation sequence for $G$. A pair $(x,y)\in X\times X$ is said to be a \textbf{sofic mean dimension pair} if for every standard open cover, $\alpha$, which distinguishes $(x,y)$, it holds  $\mdim_{\Sigma} (\alpha)>0$. Denote the set of mean dimension pairs by $\D_ms(X,G)$. Note that $\D_ms (X,G)\subset (X\times X)\setminus \Delta_X$.
\end{deff}

\begin{prop}
\label{prop:upmd}
Let $\Sigma$ be a sofic approximation sequence for $G$. An action $(X,G)$ has sofic UPMD if and only if $\D_ms(X,G)=X^2\setminus \Delta_X$.
\end{prop}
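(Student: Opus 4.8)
The plan is to prove both implications by directly unwinding the definitions of sofic UPMD (Definition \ref{def:UPMD}) and of sofic mean dimension pairs (Definition \ref{def:mdpairs}), using the structural fact recorded in Definition \ref{def:standard} that a standard cover is exactly one that distinguishes some pair. No genuine analytic input about sofic mean dimension is needed beyond the definitions; the whole statement is a translation between the quantifier ``for every standard cover'' appearing in UPMD and the quantifiers ``for every distinguished pair'' / ``for every distinguishing cover'' appearing in the pair formulation.

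For the forward direction I would assume $\act$ has sofic UPMD and show $\D_ms(X,G)=X^2\setminus\Delta_X$. Since $\D_ms(X,G)\subseteq X^2\setminus\Delta_X$ by definition, only the reverse inclusion needs proof. Fix an arbitrary off-diagonal pair $(x,y)$ and let $\alpha$ be any standard cover distinguishing it. UPMD gives $\mdim_\Sigma(\alpha)>0$ for \emph{every} standard cover, hence in particular for $\alpha$; thus $(x,y)$ satisfies the defining condition of a mean dimension pair (note this holds regardless of whether distinguishing covers exist, the quantified statement being vacuous otherwise). Therefore $X^2\setminus\Delta_X\subseteq\D_ms(X,G)$, yielding equality.

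For the reverse direction I would assume $\D_ms(X,G)=X^2\setminus\Delta_X$ and verify UPMD. Fix an arbitrary standard cover $\alpha=(U,V)$. By Definition \ref{def:standard}, $\alpha$ distinguishes some pair $(x,y)$, i.e.\ $x\notin\overline V$ and $y\notin\overline U$. The small point to check is that such a pair is automatically off-diagonal: if $x=y$, then $U\cup V=X$ forces $x\in\overline U\cup\overline V$, contradicting $x\notin\overline U$ and $x\notin\overline V$. Hence $(x,y)\in X^2\setminus\Delta_X=\D_ms(X,G)$, so $(x,y)$ is a mean dimension pair. Since $\alpha$ is a standard cover distinguishing the mean dimension pair $(x,y)$, its defining property gives $\mdim_\Sigma(\alpha)>0$. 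As $\alpha$ was arbitrary, $\act$ has UPMD.

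The argument is essentially tautological, so there is no serious obstacle; the only step requiring a moment's care is the diagonal bookkeeping in the reverse direction. There one must use both halves of the equivalence in Definition \ref{def:standard} (every standard cover distinguishes some pair, and every distinguishing cover is standard) and must observe that a pair distinguished by a standard cover cannot lie on $\Delta_X$. This is exactly what makes the two set inclusions line up to give the clean equality $\D_ms(X,G)=X^2\setminus\Delta_X$.
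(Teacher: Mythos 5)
Your proof is correct and follows essentially the same route as the paper's: both directions are obtained by unwinding the definitions, using that every standard cover distinguishes some pair and every distinguishing cover is standard. Your extra check that a distinguished pair is automatically off-diagonal (since $x\notin\overline U\cup\overline V$ would contradict $U\cup V=X$) is a small bookkeeping detail the paper leaves implicit, but the argument is otherwise identical.
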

\begin{proof}
    Assume $(X,G)$ has sofic UPMD and let $(x,y)\in X^2\setminus \Delta_X$. Clearly for every standard open cover, $\alpha$, which distinguishes $(x,y)$ it holds  $\mdim_{\Sigma} (\alpha)>0$. Conversely every standard open cover distinguishes some $(x,y)\in X^2\setminus \Delta_X$, so if $\D_ms(X,G)=X^2\setminus \Delta_X$, then for all  standard open covers $\alpha$ it holds  $\mdim_{\Sigma} (\alpha)>0$.
\end{proof}
\subsection{Existence and properties of mean dimension pairs}\label{subsec:prop_mdim_pairs}

Given a group $G$ and a compact metrizable space $A$, $A^{G}$ is equipped with the product topology (which is metrizable). Define the left shift action of $G$ on $A^G$  by $(gx)_h=x_{g^{-1}h}$ for all $x\in Z^G$ and $g, h\in G$.
The following result follows from the proof of \cite[Theorem 7.5]{L12}.

\begin{prop}
[Li]
\label{prop:AG_UPMD}
Let $A$ be a path-connected compact metrizable space, $G$ a sofic group. The left shift action of $G$ on $A^G$ has sofic UPMD.
\end{prop}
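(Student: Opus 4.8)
The plan is to show directly that for every standard open cover $\alpha=(U,V)$ of $A^G$ one has $\mdim_\Sigma(\alpha)>0$, which by Definition \ref{def:UPMD} is exactly sofic UPMD. By Definition \ref{def:standard}, a standard cover distinguishes some pair $(x,y)$ with $x\notin\overline V$ and $y\notin\overline U$; so there exist disjoint closed sets and, by continuity of the coordinate projections, a finite set $E\in\mathcal F(G)$ and open neighborhoods whose constraints are felt only on the coordinates in $E$. Concretely, I would first locate a single coordinate direction along which the cover genuinely separates: there is some $h\in G$ (we may take $h=e_G$ after translating) and points $a_0,a_1\in A$ such that every configuration agreeing with $x$ (resp.\ $y$) on a fixed finite window $E$ lies outside $\overline V$ (resp.\ outside $\overline U$). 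Path-connectedness of $A$ then supplies a continuous path $\gamma\colon[0,1]\to A$ from $a_0$ to $a_1$, and this path is the source of the parameter that mean dimension will detect.

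The heart of the argument is a covering-dimension estimate for $\DD(\alpha^{[n]}|_{\Map(d,F,\delta,s)})$ along a sofic approximation $s\colon G\to\Sym(n)$. The idea, following the proof of \cite[Theorem 7.5]{L12}, is to build an explicit family of microstates indexed by a high-dimensional cube: using the soficity conditions, for large $n$ one finds a large subset $W\subset[n]$ (of size $\ge(1-\eta)n$) on which the partial orbit structure dictated by $F$ is faithfully modeled, and one plants along the coordinates indexed by $W$ configurations that, coordinate by coordinate, interpolate via $\gamma$ between the $U$-forcing and $V$-forcing patterns. This produces a continuous map from a cube $[0,1]^{W}$ (of dimension comparable to $n$) into $\Map(d,F,\delta,s)$ such that the cover $\alpha^{[n]}$ pulled back separates faithfully in each of the $|W|$ directions. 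An application of Proposition \ref{prop:mdim_func} together with the standard fact that $\DD$ of the canonical cover of $[0,1]^m$ grows linearly (essentially $\DD\ge$ a positive constant times $m$) then forces
\[
\DD(\alpha^{[n]}|_{\Map(d,F,\delta,s)})\ \ge\ c\,n
\]
for a constant $c>0$ independent of $F,\delta,s$ (with $\delta$ small and $n$ large). Dividing by $n$ and taking the relevant $\varlimsup$ and infima as in Subsection \ref{subsec:sofic mdim} yields $\mdim_\Sigma(\alpha)\ge c>0$.

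The main obstacle is controlling the dimension \emph{from below} in the definition of $\DD(\alpha^{[n]}|_{\Map})$: unlike entropy, where one only counts a covering number, here one must certify that no refinement $\beta\succ\alpha^{[n]}$ can have small order on the microstate set. This is where the path $\gamma$ and path-connectedness are essential, and where one needs the separation to persist through the refinement. I would handle this by arguing that the planted cube maps into the microstate space in a way that is \emph{essential} for the two-element cover $(U,V)$ in each planted coordinate — i.e.\ the pullback cover restricted to each copy of $[0,1]$ has no refinement of order $0$ — and then invoking the subadditivity and product behavior of $\DD$ (Lemma \ref{lem:subadd}, Proposition \ref{prop:mdim_nD}) in the reverse, lower-bounding direction via an essential-map / Lebesgue-covering argument. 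A secondary technical point is the bookkeeping between $\Map$ and $\Map'$ and the choice of $\delta$: one must verify that the interpolated configurations genuinely satisfy the $\delta$-approximate equivariance for the finitely many $g\in F$, which follows because off the small exceptional set the permutations $s(g)$ match the group multiplication, so the planted configurations are $\gamma$-Lipschitz perturbations of honest shifts. Once the linear lower bound on $\DD$ is secured uniformly in $(F,\delta,s)$, the conclusion is immediate.
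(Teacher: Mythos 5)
Your outline follows the same blueprint as the argument the paper actually relies on: the paper gives no self-contained proof here, but quotes the proposition as extracted from the proof of \cite[Theorem 7.5]{L12}, which indeed proceeds by coordinatewise path interpolation, planting along a sofic model, and an essential-map lower bound on the microstate space. The genuine gap is in how you certify the decisive estimate $\DD\bigl(\alpha^{[n]}|_{\Map(d,F,\delta,s)}\bigr)\ge cn$. First, Lemma \ref{lem:subadd} and Proposition \ref{prop:mdim_nD} cannot be ``invoked in the reverse, lower-bounding direction'': these inequalities are genuinely one-sided, and the reverse is false --- for instance, the join of $n$ finite open covers of $[0,1]$ still satisfies $\DD\le 1$, since $\DD$ of a cover never exceeds the covering dimension of the ambient space; this is exactly why positive mean dimension cannot be produced by formal cover algebra (and why $\mdim_\Sigma(\K^{\Z},\shift)=0$). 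Second, per-direction essentiality --- your claim that ``the pullback cover restricted to each copy of $[0,1]$ has no refinement of order $0$'' --- is strictly weaker than what is needed: a cover of the parameter cube can be essential along every axis and still have small order. The correct tool is the Lebesgue covering lemma on the cube (\cite[Lemma 3.2]{LW}): an open cover of $[0,1]^m$ no member of which meets two opposite faces has order at least $m$. One applies it to an \emph{arbitrary} refinement $\beta\succ\alpha^{[n]}$: every $B\in\beta$ lies in some $U_1\times\cdots\times U_n$ with each $U_v\in\{U,V\}$, so if the planted family $\Phi\colon[0,1]^{W}\to\Map(d,F,\delta,s)$ forces the $v$-th microstate coordinate outside $\overline{V}$ on the face $\{t_v=0\}$ and outside $\overline{U}$ on $\{t_v=1\}$, then $\Phi^{-1}(B)$ meets at most one of each opposite face pair, whence $\ord\bigl(\beta|_{\Map(d,F,\delta,s)}\bigr)\ge\ord\bigl(\Phi^{-1}\beta\bigr)\ge|W|$. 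This short argument is the heart of the cited proof and is precisely the step your sketch asserts rather than proves.

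There is also an inconsistency in the planting itself. For an arbitrary standard cover the distinguished pair $(x,y)$ need not consist of constant configurations, so forcing the $v$-th coordinate out of $\overline{V}$ requires controlling an entire finite window $\{s(g)^{-1}v:g\in E\}$ and interpolating between the patterns $x|_E$ and $y|_E$ inside the path-connected space $A^{E}$ --- not along a single path between two points $a_0,a_1\in A$ read in one coordinate. Once $|E|\ge 2$, the windows attached to distinct $v\in W$ overlap whenever $|W|\ge(1-\eta)n$, and the parameters conflict; one must instead choose $W$ with pairwise disjoint windows inside the sofic-good set, which a greedy selection provides only with $|W|\ge n/K$ for a constant $K$ depending on $E$. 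This still gives a linear bound, hence $\mdim_\Sigma(\alpha)\ge 1/K>0$, but your density claim and the coordinate-by-coordinate interpolation must be repaired accordingly; moreover, off the planted windows the microstates should be filled with a fixed constant configuration (a shift fixed point), so that the approximate $F$-equivariance is controlled by soficity alone, uniformly over the parameter cube. With these repairs your outline coincides with Li's argument.
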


The following result follows from Proposition \ref{prop:upmd}. 
\begin{corollary}\label{cor:cube_shift_pairs} Let $A$ be a path-connected compact metrizable space, $G$ a sofic group, $G \curvearrowright A^G$ the left shift action of $G$ on $A^G$,
If $x\neq y\in A^G$, then $(x,y)\in \D_ms(A^G,G)$.
\end{corollary}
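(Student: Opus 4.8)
The plan is to chain together the two propositions that immediately precede the statement, so that the corollary becomes a one-line consequence. The substantive input is Proposition \ref{prop:AG_UPMD} (Li), which asserts that for $A$ path-connected compact metrizable and $G$ sofic, the left shift action $G \curvearrowright A^G$ has sofic UPMD; the rest is a purely formal translation via Proposition \ref{prop:upmd}.

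First I would invoke Proposition \ref{prop:AG_UPMD} to record that $G \curvearrowright A^G$ has sofic UPMD with respect to the given sofic approximation sequence $\Sigma$, i.e.\ $\mdim_{\Sigma}(\alpha) > 0$ for every standard open cover $\alpha$ of $A^G$. Next I would apply Proposition \ref{prop:upmd}, whose ``only if'' direction says precisely that sofic UPMD is equivalent to $\D_ms(A^G,G) = (A^G)^2 \setminus \Delta_{A^G}$. Combining these two facts yields $\D_ms(A^G,G) = (A^G)^2 \setminus \Delta_{A^G}$.

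To finish, given $x \neq y$ in $A^G$, the pair $(x,y)$ belongs to $(A^G)^2 \setminus \Delta_{A^G}$ by definition of the diagonal, and hence $(x,y) \in \D_ms(A^G,G)$, which is the assertion.

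There is no genuine obstacle at the level of this corollary: all of the real work is absorbed into Proposition \ref{prop:AG_UPMD}, where path-connectedness of $A$ is the hypothesis ensuring that no standard open cover can be collapsed to mean dimension zero. The only point requiring a word of care is to confirm that the equivalence in Proposition \ref{prop:upmd} is applied with respect to the same $\Sigma$ for which UPMD is established, which is automatic since both statements are phrased relative to a fixed sofic approximation sequence.
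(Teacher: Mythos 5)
Your proof is correct and is exactly the paper's argument: the paper derives Corollary \ref{cor:cube_shift_pairs} by combining Proposition \ref{prop:AG_UPMD} (sofic UPMD of $G \curvearrowright A^G$) with Proposition \ref{prop:upmd} (UPMD is equivalent to $\D_ms(A^G,G)=(A^G)^2\setminus \Delta_{A^G}$), just as you do. Your closing remark that the same fixed $\Sigma$ is used throughout is a fair point of care, and it is indeed automatic in the paper's setup.
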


Now we will show that every system with positive sofic mean dimension has sofic mean dimension pairs.

\begin{lemma}

\label{lem:subaction pairs}
Let $\act$ be an action, $\Sigma$ a sofic approximation sequence for $G$, and $Y\subset X$ a $G$-invariant subset. If $(x,y)\in \D_ms(Y,G)$ then $(x,y)\in \D_ms(X,G)$.
\end{lemma}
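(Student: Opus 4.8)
The plan is to reduce the statement to two ingredients: a topological restriction property of standard covers, and a monotonicity property of sofic mean dimension under passage to a $G$-invariant subset. Fix an arbitrary standard open cover $\alpha=(U,V)$ of $X$ that distinguishes $(x,y)$; the goal is to show $\mdim_{\Sigma}(\alpha)>0$, which by Definition \ref{def:mdpairs} yields $(x,y)\in\D_ms(X,G)$. Since $(x,y)\in\D_ms(Y,G)\subseteq(Y\times Y)\setminus\Delta_Y$, both points lie in $Y$ and are distinct, so it makes sense to restrict $\alpha$ to $Y$.

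First I would treat the topological step. Set $\alpha|_Y=(U\cap Y,V\cap Y)$, an open cover of $Y$. Because closures computed in $Y$ satisfy $\overline{V\cap Y}^{\,Y}\subseteq\overline{V}^{\,X}\cap Y$ while $x\notin\overline{V}^{\,X}$, and symmetrically for $U$ and $y$, the cover $\alpha|_Y$ distinguishes $(x,y)$ inside $Y$. By Definition \ref{def:standard} any distinguishing cover is automatically standard, so $\alpha|_Y$ is a standard open cover of $Y$ distinguishing $(x,y)$. The hypothesis $(x,y)\in\D_ms(Y,G)$ then gives $\mdim_{\Sigma}(Y,\alpha|_Y)>0$.

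The main obstacle is the monotonicity inequality $\mdim_{\Sigma}(Y,\alpha|_Y)\leq\mdim_{\Sigma}(\alpha)$, which I would establish at the level of the quantities $\DD(\,\cdot\,,d,F,\delta,s)$. Write $\Map(d,F,\delta,s,Y)$ for the set of maps $\varphi\colon[n]\to Y$ satisfying the sofic inequality; since $Y$ is $G$-invariant and carries the restricted metric, $\Map(d,F,\delta,s,Y)=\Map(d,F,\delta,s)\cap Y^{[n]}\subseteq\Map(d,F,\delta,s)$. Given any open cover $\beta\succ\alpha^{[n]}$ of $X^{[n]}$, put $\gamma=\{B\cap Y^{[n]}:B\in\beta\}$; then $\gamma$ refines $(\alpha^{[n]})|_{Y^{[n]}}=(\alpha|_Y)^{[n]}$, and for every $z\in\Map(d,F,\delta,s,Y)\subseteq Y^{[n]}$ the number of members of $\gamma$ containing $z$ equals the number of members of $\beta$ containing $z$. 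Hence $\ord(\gamma|_{\Map(d,F,\delta,s,Y)})\leq\ord(\beta|_{\Map(d,F,\delta,s)})$, and minimizing over $\beta$ gives $\DD(\alpha|_Y,d,F,\delta,s)\leq\DD(\alpha,d,F,\delta,s)$. The delicate point is that $\DD$ on a subset is defined (Definition \ref{def:D_subset}) by minimizing the order over refinements of the \emph{ambient} cover; the traces $\gamma$ form only a subfamily of the refinements of $(\alpha|_Y)^{[n]}$ available inside $Y^{[n]}$, which is precisely why the inequality points in the direction we need rather than the reverse.

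Finally, dividing by $n_i$, taking $\varlimsup_{i\to\infty}$, and then applying $\inf_{\delta}$ and $\inf_{F}$ propagates this inequality to $\mdim_{\Sigma}(Y,\alpha|_Y)\leq\mdim_{\Sigma}(\alpha)$. Combining with the second paragraph yields $\mdim_{\Sigma}(\alpha)\geq\mdim_{\Sigma}(Y,\alpha|_Y)>0$. Since $\alpha$ was an arbitrary standard open cover of $X$ distinguishing $(x,y)$, we conclude $(x,y)\in\D_ms(X,G)$, as desired.
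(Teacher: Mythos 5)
Your proposal is correct and follows essentially the same route as the paper: restrict the distinguishing standard cover $\alpha$ to $Y$, observe that $\alpha|_Y$ is standard and distinguishes $(x,y)$ in $Y$, and conclude via the monotonicity $\mdim_{\Sigma}(\alpha)\geq \mdim_{\Sigma}(\alpha|_Y)$. The only difference is that the paper cites this monotonicity to the proof of \cite[Proposition 2.11]{L12}, whereas you prove it inline via $\Map(d,F,\delta,s,Y)\subseteq\Map(d,F,\delta,s)$ and traces of covers (with the harmless caveat that distinct members of $\beta$ may have equal traces, so the multiplicity count is ``at most'' rather than ``equals'' --- which is the direction you need anyway).
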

\begin{proof}
   Let $\alpha$ be a standard open cover of $X$, distinguishing $(x,y)\in Y\times Y\setminus \Delta_Y$ in $X$. Clearly $\alpha|_{Y}$ its restriction to $Y$, is a standard open cover of $Y$, distinguishing $(x,y)$ in $Y$. By the definition of mean dimension it holds (see proof of \cite[Proposition 2.11]{L12}) 
$$
\mdim_{\Sigma} (\alpha)\geq \mdim_{\Sigma} (\alpha|_{Y}).
$$
As $\mdim_{\Sigma} (\alpha|_{Y})>0$ this concludes the result.  
\end{proof}

\begin{prop}\label{prop:standard_pos_mdim}
Let $\act$ be an action, $\Sigma$ a sofic approximation sequence for $G$. If $\mdim_{\Sigma}(X,G)>0$ then there exists a standard open cover $\alpha$ such that $\mdim_{\Sigma} (\alpha)>0.$
\end{prop}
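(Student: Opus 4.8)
The plan is to start from the definition $\mdim_\Sigma(X,G)=\sup_\alpha \mdim_\Sigma(\alpha)>0$, fix a finite open cover $\beta$ with $\mdim_\Sigma(\beta)>0$, and then reduce the statement to a purely combinatorial refinement fact: I will produce finitely many standard covers $\alpha_1,\dots,\alpha_m$ whose join refines (a non-dense refinement of) $\beta$. Granting this, Lemma \ref{lem:basic}(1) gives $\mdim_\Sigma(\bigvee_{j}\alpha_j)\ge \mdim_\Sigma(\beta)>0$, while iterating Lemma \ref{lem:basic}(2) gives $\mdim_\Sigma(\bigvee_{j}\alpha_j)\le\sum_{j}\mdim_\Sigma(\alpha_j)$. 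Hence $\sum_j\mdim_\Sigma(\alpha_j)>0$, so some $\alpha_j$ satisfies $\mdim_\Sigma(\alpha_j)>0$, and being standard it is the desired cover.

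First I would replace $\beta$ by a refinement all of whose members are non-dense. Since $\mdim_\Sigma(\beta)>0$ forces $X$ to have at least two points, $\diam(X)>0$; for each $x\in X$ I choose an open ball $B_{r_x}(x)\subseteq B_i$ (for some $B_i\in\beta$ containing $x$) with $2r_x<\diam(X)$, so that $\overline{B_{r_x}(x)}\neq X$ and $B_{r_x}(x)$ is non-dense. Compactness yields finitely many such balls $C_1,\dots,C_m$ forming an open cover $\gamma\succ\beta$ with every $C_l$ non-dense, and by Lemma \ref{lem:basic}(1), $\mdim_\Sigma(\gamma)\ge\mdim_\Sigma(\beta)>0$. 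Thus it suffices to treat $\gamma$.

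The key step is to build the standard covers from $\gamma$. Using the shrinking lemma for the normal space $X$, I would choose an open cover $\{A_1,\dots,A_m\}$ with $\overline{A_l}\subseteq C_l$, discarding any index with $A_l=\emptyset$; the remaining $A_l$ are nonempty and still cover $X$. For each such $l$ set $\alpha_l=\{C_l,\,X\setminus\overline{A_l}\}$. This is an open cover because $\overline{A_l}\subseteq C_l$, and it is standard in the sense of Definition \ref{def:standard}: $C_l$ is non-dense by the previous paragraph, while $X\setminus\overline{A_l}$ is non-dense because its closure omits the nonempty open set $A_l$. The only cell of $\bigvee_l\alpha_l$ not automatically contained in some $C_l$ is $\bigcap_l(X\setminus\overline{A_l})=X\setminus\bigcup_l\overline{A_l}=\emptyset$, since $\{A_l\}$ covers $X$; every other cell selects some $C_l$ as a coordinate and hence lies in $C_l\in\gamma$. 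Therefore $\bigvee_l\alpha_l\succ\gamma$, which is exactly the refinement needed in the first paragraph.

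I expect the main obstacle to be the construction in the third paragraph, namely arranging that \emph{both} members of each $\alpha_l$ are non-dense while still guaranteeing that the join refines $\gamma$. The tension is that enlarging the second set enough to cover $X$ tends to make it dense; the resolution is to pass first to a non-dense refinement $\gamma$ and then to cut with the closures $\overline{A_l}$ of a shrinking, which simultaneously keeps $X\setminus\overline{A_l}$ non-dense (because $A_l\neq\emptyset$) and forces the single ``bad'' cell $\bigcap_l(X\setminus\overline{A_l})$ to be empty. Once this is in place, the monotonicity and subadditivity of $\mdim_\Sigma$ recorded in Lemma \ref{lem:basic} immediately finish the argument.
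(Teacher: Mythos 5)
Your proof is correct, and while it shares the paper's overall skeleton -- decompose into two-element covers whose join refines a positive cover, then apply monotonicity and subadditivity (Lemma \ref{lem:basic}) to extract one two-element cover with positive $\mdim_\Sigma$ -- the construction of the standard covers is genuinely different. The paper keeps the original members $U_i$ of the positive cover as one half of each pair: after normalizing so that each $U_i$ contains a point in no other $U_j$, it covers $X$ by balls whose closures sit inside the $U_i$'s, takes a finite subcover $(W_j)$, and sets $V_i=\bigcap\{\overline{W_j}^{\,c}:U_i^c\subset \overline{W_j}^{\,c}\}$; the resulting pair $(U_n,V_n)$ with $\mdim_\Sigma((U_n,V_n))>0$ need not be standard, so the paper finishes with an a posteriori repair step, excising small closed balls ($U_n\setminus\overline{B}_\epsilon(x)$, and again if $V_n$ is dense) and invoking monotonicity once more. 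You instead arrange standardness from the outset: you first refine to a cover $\gamma$ by balls of radius less than $\tfrac{1}{2}\diam(X)$, so every member is automatically non-dense, and then use the shrinking lemma to produce open $A_l$ with $\emptyset\neq A_l$, $\overline{A_l}\subseteq C_l$, covering $X$; the pairs $\{C_l, X\setminus\overline{A_l}\}$ are then standard immediately ($C_l$ is non-dense by construction, and $\overline{X\setminus\overline{A_l}}$ misses the nonempty open set $A_l$ since $A_l$ is open and disjoint from $X\setminus\overline{A_l}$), while the covering property of $\{A_l\}$ kills the single cell $\bigcap_l(X\setminus\overline{A_l})$ of the join, giving $\bigvee_l\alpha_l\succ\gamma$. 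What your route buys is the elimination of both the genuine-cover normalization \eqref{eq:genuine cover} and the iterative density surgery at the end of the paper's proof (the ``continue as above'' step), at the modest cost of one standard tool, the shrinking lemma for finite open covers of a normal space; the paper's route stays closer to the original cover $\alpha$, which can matter if one wants the standard cover to be comparable to $\alpha$ itself, but as a proof of the stated proposition yours is the more streamlined argument.
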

\begin{proof}
There exists a finite open cover $\alpha=(U_1,...,U_k)$ such that $$\mdim_{\Sigma} (\alpha)>0.$$ Without loss of generality, one may assume that for every $i\in [k]$,
\begin{equation}\label{eq:genuine cover}
U_i\setminus \bigcup_{j\in [k]\setminus \{i\}}U_j\neq \emptyset    
\end{equation}
 Given $i\in [k]$ and $x\in U_i$, let $B^i(x)$ be an open ball such that $\overline{B^i(x)}\subset U_i$. Let $(W_1,...W_p)$ be a finite subcover of $$\{B^i(x)\}_{i\in [k],x\in U_i}.$$
 \noindent
 By Equation \eqref{eq:genuine cover}, for every $i\in [k]$, there exists $j\in [p]$ such that $\overline{W_j}\subset U_i$ (equivalently $U_i^c\subset \overline{W_j}^{c}$, where the latter is an open set). Let  $$\C_i=\{\overline{W_j}^{c}:U_i^c\subset \overline{W_j}^{c}\}.$$ 
 Thus $\C_i\neq \emptyset$ for all $i\in [k]$. Moreover, if $U_i^c\neq \emptyset$, then 
 $$ U_i^c\subset V_i:=\bigcap_{Z\in \C_i} Z\neq \emptyset.$$
 \noindent
 We conclude that $\beta_i=(U_i,V_i)$ is a cover of $X$ for every $i\in[k]$.  Note that for every $j\in [p]$ there exists $i\in [k]$ so that  $\overline{W_{j}}\subset U_{i}$. Thus, every $\overline{W_j}^{c}$ appears in some $\C_i$, which implies $\bigcup _{i\in [k]}\bigcup_{Z\in \C_i} Z^c=X$. Thus:
 $$\bigcap _{i\in [k]}\bigcap_{Z\in \C_i} Z =\bigcap _{i\in [k]} V_i=\emptyset.$$

\noindent
Note that if $W\in \bigvee_{i=1}^k \beta_i$ then either $W\subset U_i$ for some $i$ or apriori $W=\bigcap _{i\in [k]} V_i$. However we showed above that this set is empty thus for all $W\in \bigvee _{i=1}^k \beta_i$, $W\subset U_i$ for some $i$  which implies  $\bigvee _{i=1}^k \beta_i\succ\alpha$. Using Lemma \ref{lem:basic} one obtains that
$$
0<\mdim_{\Sigma} (\alpha)\leq \mdim_{\Sigma} (\bigvee _{i=1}^k \beta_i)
$$
$$
\leq \sum _{i=1}^k \mdim_{\Sigma} (\beta_i).
$$
\noindent
Hence, there exists $n\in [k]$ such that $\mdim_{\Sigma} (\beta_n)>0$. If $U_n$ and $V_n$ are not dense, then $\beta_n$ is a standard open cover and the proof is complete. Assume that $U_n$ is dense. There exists $x\in U_n$ such that $\overline{B}_{\epsilon}(x)\subset V_n$. It holds that $\gamma:=(U_n\setminus \overline{B}_{\epsilon}(x),V_n)$ is also an open cover of $X$. Furthermore, since $\gamma$ is finer than $\beta_n$, we conclude that $$\mdim_{\Sigma} (\gamma)>0.$$ 
\noindent
If $V_n$ is not dense then $\gamma$ is a standard open cover and the proof is complete.  If $V_n$ is dense, then using the argument above, we find $y\in V_n$ such that $\overline{B}_{\epsilon'}(y)\subset U_n\setminus \overline{B}_{\epsilon}(x)$ and continue as above.
\end{proof}

\begin{prop}
\label{prop:mdimpair}
Let $(X,G)$ be an action, $\Sigma$ a sofic approximation sequence for $G$ and $\alpha=(U,V)$ a standard cover of $X$. If  $$\mdim_{\Sigma} (\alpha)>0,$$ then there exists $x\in U^c$ and $y\in V^c$ such that  $(x,y)\in \D_ms(X,G).$

\end{prop}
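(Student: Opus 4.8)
The plan is to produce the desired pair by a compactness argument, showing that if no such pair existed, then $\mdim_\Sigma(\alpha)$ could be forced to zero, contradicting the hypothesis. More precisely, I would argue by contradiction: suppose that for every $x\in U^c$ and every $y\in V^c$, the pair $(x,y)$ is \emph{not} a sofic mean dimension pair. By Definition \ref{def:mdpairs}, for each such pair $(x,y)$ there is a standard open cover $\alpha_{x,y}=(U_{x,y},V_{x,y})$ distinguishing $(x,y)$ with $\mdim_\Sigma(\alpha_{x,y})=0$ (equivalently $\le 0$, but for a genuine standard cover this gives $0$). The distinguishing condition means $x\notin \overline{V_{x,y}}$ and $y\notin \overline{U_{x,y}}$, so each point of the relevant pair lies in an open set complementary to a closure appearing in the cover.

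The key step is to exploit compactness of $U^c$ and $V^c$ (both are closed subsets of the compact space $X$) to extract from the family $\{\alpha_{x,y}\}$ finitely many covers whose join still refines $\alpha$, yet has mean dimension zero by subadditivity (Lemma \ref{lem:basic}(2)). Concretely, fixing $x\in U^c$ and letting $y$ range over $V^c$, the open sets $\{(\overline{U_{x,y}})^c\}_y$ cover $V^c$ (since $y\in (\overline{U_{x,y}})^c$ for each $y$); by compactness finitely many suffice, say indexed by $y_1,\dots,y_m$. Taking the join $\gamma_x=\bigvee_j \alpha_{x,y_j}$, one checks that $\gamma_x$ has a member contained in an open set avoiding $x$ and that the remaining structure lets one separate $x$ from all of $V^c$. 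Then letting $x$ range over the compact set $U^c$ and repeating, one extracts finitely many $x_1,\dots,x_\ell$ so that the total join $\gamma=\bigvee_i \gamma_{x_i}$ refines the original cover $\alpha=(U,V)$. The point is that $U^c$ and $V^c$ are precisely the sets that $\alpha$ must ``separate,'' and the constructed covers individually separate points of these two closed sets, so their common refinement beats $\alpha$.

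By Lemma \ref{lem:basic}(1), since $\gamma\succ\alpha$ we get $\mdim_\Sigma(\gamma)\ge \mdim_\Sigma(\alpha)>0$; but by Lemma \ref{lem:basic}(2) applied repeatedly, $\mdim_\Sigma(\gamma)\le \sum_{i,j}\mdim_\Sigma(\alpha_{x_i,y_j})=0$, a contradiction. This forces the existence of at least one pair $(x,y)\in U^c\times V^c$ that is a sofic mean dimension pair.

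The main obstacle I anticipate is verifying that the finite join $\gamma$ genuinely refines $\alpha=(U,V)$, rather than merely covering $X$. The subtlety is that a standard cover distinguishing $(x,y)$ only controls the \emph{closures} $\overline{U_{x,y}}$ and $\overline{V_{x,y}}$ relative to the points $x,y$, so the bookkeeping of which members of the join land inside $U$ versus inside $V$ must be handled carefully. This is the same flavor of covering-and-refinement argument used in the proof of Proposition \ref{prop:standard_pos_mdim} above (and in the classical entropy-pair existence proofs of Blanchard), so I would model the refinement check on that construction: the complements $(\overline{U_{x,y}})^c$ and $(\overline{V_{x,y}})^c$ play the role that the sets $V_i$ played there, and the compactness extraction must ensure every member of the resulting join sits inside either $U$ or $V$.
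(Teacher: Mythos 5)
Your proposal is correct, but it takes a genuinely different route from the paper's. The paper argues \emph{constructively}: covering $U^c$ by finitely many balls $W_1,\ldots,W_k$ of radius $\frac{1}{n}$ centered in $U^c$, setting $F_i=U^c\cap\overline{W_i}$, and noting $\bigvee_{i=1}^k(F_i^c,V)\succ(U,V)$, it uses subadditivity (Lemma \ref{lem:basic}(2)) to replace $(U,V)$ by a two-set cover $(U_n,V_n)$ of positive mean dimension whose complements have diameter at most $\frac{2}{n}$ and shrink inside $U^c$, $V^c$; the pair is then extracted as $\{x\}=\bigcap_n U_n^c$, $\{y\}=\bigcap_n V_n^c$, and is verified to lie in $\D_ms(X,G)$ by monotonicity (Lemma \ref{lem:basic}(1)), since any standard cover distinguishing $(x,y)$ refines some $(U_m,V_m)$. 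You instead argue \emph{by contradiction} with a double compactness extraction, and the obstacle you flagged (that the total join genuinely refines $(U,V)$) does close with the right bookkeeping: for each $x\in U^c$ choose $y_1,\ldots,y_{m_x}\in V^c$ with $\bigcap_j\overline{U_{x,y_j}}\subset V$ and set $N_x=\bigcap_j\bigl(\overline{V_{x,y_j}}\bigr)^c$, an open neighborhood of $x$ disjoint from every $V_{x,y_j}$; then choose $x_1,\ldots,x_\ell$ with $U^c\subset\bigcup_i N_{x_i}$. A member of $\gamma=\bigvee_i\bigvee_j\alpha_{x_i,y_j}$ either takes all $U$-factors in some block $i$ (hence lies in $V$), or contains a $V$-factor in every block (hence misses every $N_{x_i}$, so misses $U^c$ and lies in $U$); thus $\gamma\succ(U,V)$ and Lemma \ref{lem:basic} gives $0<\mdim_{\Sigma}(\alpha)\leq\mdim_{\Sigma}(\gamma)\leq\sum_{i,j}\mdim_{\Sigma}(\alpha_{x_i,y_j})\leq 0$, a contradiction. (Two minor points: since $(U,V)$ covers $X$, $U^c\cap V^c=\emptyset$, so every pair in $U^c\times V^c$ is automatically off-diagonal and the negation of membership in $\D_ms(X,G)$ indeed supplies a distinguishing cover with $\mdim_{\Sigma}\leq 0$; and that value need not literally equal $0$ a priori, as $-\infty$ is possible in general, but $\leq 0$ is all the contradiction needs.) What each approach buys: the paper's nested-shrinking construction is explicit, locating the pair as an intersection of shrinking complements and reusing the same refinement device twice, whereas your argument is shorter and non-constructive, in the spirit of the classical Blanchard-style compactness arguments for the existence of entropy pairs; both ultimately rest on exactly the same two facts, monotonicity and subadditivity of $\mdim_{\Sigma}$ under joins.
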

\begin{proof}

As $(U,V)$ is a standard cover of $X$, it holds $U^c\neq \emptyset$ and $V^c\neq \emptyset$. 
Fix $n\in \N$. Starting with the cover $(U,V)$, we will construct an open set $U_n$ with certain properties listed below using only the fact that $U^c\neq \emptyset$. This will allow us to repeat the construction under the same assumption on another (not necessarily standard) two-set cover in the sequel. Let $\beta=(W_1,...,W_k)$ be a finite cover of $U^c$ composed of balls of radius $\frac{1}{n}$ centered at points in $U^c$. Denote $F_i=U^c\cap \overline{W_i}$ for every $i\in [k]$. Note that for every $i$,  $F_i\subset U^c$. Thus 
$(F_i^c,V)$ is a cover of $X$. Moreover as $W_i$ is centered in $U^c$, it holds $U^c\cap W_i\neq \emptyset$ which implies $F_i\neq \emptyset$. We claim $\bigvee_{i=1}^k (F_i^c,V)\succ (U,V)$. It is enough to show $\bigcap_{i=1}^k F_i^c\subset U$, equivalently $U^c\subset \bigcup_{i=1}^k F_i=U^c\cap \bigcup_{i=1}^k \overline{W_i}$ which trivially holds.
Using Lemma \ref{lem:basic} it holds
$$
0<\mdim_{\Sigma} ((U,V))\leq \mdim_{\Sigma} (\bigvee _{i=1}^k (F_i^c,V))
$$
$$
\leq \sum _{i=1}^k \mdim_{\Sigma} ((F_i^c,V)).
$$
Define $U_n:=F_i^c$ for some $i$ with $\mdim_{\Sigma} (F_i^c,V))>0$. The key properties of $U_n$ are:
\begin{enumerate}
  \item $(U_n,V)$ is a cover of $X$,
    \item $\mdim_{\Sigma} (U_n,V)>0$,
      \item $U_n^c\neq \emptyset$,
      \item $\diam(U_n^c)\leq \frac{2}{n}$,
        \item $U_n^c\subset U^c$.
\end{enumerate}
\noindent
We now apply the above argument to the cover $(U_n,V)$ using the fact $V^c\neq \emptyset$. We may thus construct an open set $V_n$ with the following properties:
\begin{enumerate}
  \item  $(U_n,V_n)$ is a cover of $X$,
    \item $\mdim_{\Sigma} ((U_n,V_n))>0$,
      \item $V_n^c\neq \emptyset$,
      \item $\diam(V_n^c)\leq \frac{2}{n}$,
        \item $V_n^c\subset V^c$.
\end{enumerate}

 \noindent 
 Let $\{x\}=\bigcap_n U_n^c\subset U^c$ and $\{y\}=\bigcap_n V_n^c\subset V^c$. Note that as $(U,V)$ is a cover, it holds $U^c\cap V^c=\emptyset$, so $x\neq y$. Now we prove that $(x,y)\in \D_ms(X,G)$.

Let $(V',U')$ be an open cover of $X$ which distinguishes $(x,y)$. There exists $\epsilon>0$ such that $B_{\epsilon}(x)\subset U'^c$ and $B_{\epsilon}(y)\subset V'^c$. Let $m>0$ with $\frac{1}{m}<\frac{\epsilon}{2}$. It follows that $x\in U_m^c\subset B_{\epsilon}(x)$ and $y\in V_m^c\subset B_{\epsilon}(y)$. Thus $U'\subset (B_{\epsilon}(x))^c\subset U_m$ and $V'\subset (B_{\epsilon}(y))^c\subset V_m$. Conclude 
$$ (U',V')\succ (U_m,V_m).$$
\noindent
Applying  Lemma \ref{lem:basic} it holds
$$
0<\mdim_{\Sigma} ((U_m,V_m))\leq \mdim_{\Sigma}((U',V')).
$$
Thus, $(x,y)\in \D_ms(X,G)$. 

\end{proof}
\begin{corollary}\label{cor:zmd}
Let $\Sigma$ be a sofic approximation sequence for $G$. An action $(X,G)$ has positive sofic mean dimension if and only if $\D_ms(X,G)\neq \emptyset.$
\end{corollary}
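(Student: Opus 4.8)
The plan is to prove the biconditional \(\mdim_{\Sigma}(X,G)>0 \iff \D_ms(X,G)\neq\emptyset\) by assembling the three propositions already established, so that each direction is essentially immediate. First I would handle the forward direction. Assume \(\mdim_{\Sigma}(X,G)>0\). By Proposition \ref{prop:standard_pos_mdim} there exists a standard open cover \(\alpha=(U,V)\) with \(\mdim_{\Sigma}(\alpha)>0\). Then Proposition \ref{prop:mdimpair} applies directly: it produces a pair \(x\in U^c\), \(y\in V^c\) with \((x,y)\in\D_ms(X,G)\). Hence \(\D_ms(X,G)\neq\emptyset\), as desired. This direction is the substantive one, but the work has already been done in the two preceding propositions, so here it amounts to citing them in sequence.

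For the reverse direction I would argue by contraposition (or directly). Suppose \(\D_ms(X,G)\neq\emptyset\), and pick \((x,y)\in\D_ms(X,G)\); by Definition \ref{def:mdpairs} we have \(x\neq y\). Since \(X\) is metrizable and \(x\neq y\), I can choose a standard open cover \(\alpha\) that distinguishes \((x,y)\) — for instance, taking \(U\) and \(V\) to be the complements of small disjoint closed balls around \(y\) and \(x\) respectively, exactly as in the construction underlying Definition \ref{def:standard}, which guarantees that such a distinguishing standard cover exists. By the defining property of a mean dimension pair, \(\mdim_{\Sigma}(\alpha)>0\) for every standard cover distinguishing \((x,y)\), so in particular for this \(\alpha\). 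Finally, by the definition of sofic mean dimension as a supremum over finite open covers, \(\mdim_{\Sigma}(X,G)=\sup_{\gamma}\mdim_{\Sigma}(\gamma)\geq\mdim_{\Sigma}(\alpha)>0\). This completes the reverse direction.

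I expect no serious obstacle, since the genuine difficulties are isolated in Propositions \ref{prop:standard_pos_mdim} and \ref{prop:mdimpair}, which are available. The only point requiring minor care is the existence, for a given pair \(x\neq y\), of a standard open cover distinguishing it; this is precisely the converse clause asserted in Definition \ref{def:standard} (every standard cover distinguishes some pair, and conversely any pair can be distinguished by a standard cover), and it uses only that \(X\) is a compact metrizable space with at least two points. One should also confirm that the distinguishing cover one writes down is genuinely standard, i.e.\ composed of two non-dense open sets; choosing the two open sets as complements of nondegenerate closed balls ensures each omits an open ball and is therefore non-dense. With that verified, both implications close and the corollary follows.
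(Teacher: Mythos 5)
Your proposal is correct and follows essentially the same route as the paper: the forward direction by citing Proposition \ref{prop:standard_pos_mdim} followed by Proposition \ref{prop:mdimpair}, and the reverse direction by building a distinguishing standard cover from complements of disjoint closed balls around $x$ and $y$, exactly the cover $U=X\setminus \overline{B}_\epsilon(y)$, $V=X\setminus \overline{B}_\epsilon(x)$ used in the paper. The only quibble is cosmetic: Definition \ref{def:standard} literally asserts that every standard cover distinguishes \emph{some} pair, not that every pair admits a distinguishing standard cover, but your explicit ball construction supplies exactly that missing fact (and, as the definition notes, any distinguishing cover is automatically standard), so the argument is complete.
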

\begin{proof}
   Assume $\D_ms(X,G)\neq \emptyset$. Let $(x,y)\in \D_ms(X,G)$. Let $\epsilon>0$ so that $\overline{B}_\epsilon(x)\cap \overline{B}_\epsilon(y)=\emptyset$. Define $U=X\setminus  \overline{B}_\epsilon(y)$ and $V=X\setminus \overline{B}_\epsilon(x)$. Thus $U\cup V=X$ Note $B_\epsilon(y)\subset \overline{U}^c$ and $B_\epsilon(x)\subset \overline{V}^c$. Thus $\{U,V\}$ is a standard open cover which distinguishes $(x,y)$. Conclude $ \mdim_\Sigma (X,G)\geq \mdim_\Sigma(\alpha)>0$. The opposite direction is given by Proposition \ref{prop:standard_pos_mdim} and Proposition \ref{prop:mdimpair}. 
\end{proof}

The following question is related to Question \ref{ques:local}.
\begin{question}
Given a group action, is every (sofic) mean dimension pair a (sofic) entropy pair?
\end{question}

\subsection{Completely positive mean dimension through mean dimension pairs}

Let $\pi\colon X\to Y$ be a continuous surjective function. For every $n\in \N$ we denote the induced map on the product space by $\pi_{[n]}\colon X^{[n]}\to Y^{[n]}$. If $\alpha$ is a finite open cover of $Y$, the finite open cover of $X$ consisting of preimages of elements of $\alpha$ will be denoted by $\pi^{-1}\alpha$.

The following lemma follows from \cite[Lemma 2.10]{L12}.

\begin{lemma}
[Li]
\label{lem:Li}
 Let $\Sigma=\{s_i \colon G\rightarrow \Sym(n_i) \}_{i=1}^{\infty}$ be a sofic approximation sequence for $G$, $\pi: \act \to G\curvearrowright Y$ a factor map, $d_X,d_Y$ compatible metrics on $X,Y$, $F\in \mathcal{F}(G)$ a finite subset, and $\delta>0$. Then there exists $\delta' > 0$ such that for every $n\in\N$, $s:G\to \Sym(n)$ and $p\in\Map(d_X,F,\delta',s)$ one has $\pi_{[n]}\circ p\in \Map(d_Y,F,\delta,s)$.   
\end{lemma}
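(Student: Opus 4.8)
The plan is to exploit the uniform continuity of the factor map $\pi$ together with an averaging argument that controls how often the pointwise distances can be large. First I would unpack what must be shown: writing $q=\pi_{[n]}\circ p$, so that $q(v)=\pi(p(v))$, I need to bound
$$\left(\frac{1}{n}\sum_{v=1}^{n}d_Y^{2}(\pi(p(s_g(v))),g\,\pi(p(v)))\right)^{1/2}$$
by $\delta$ for every $g\in F$. Using $G$-equivariance of $\pi$, the term $g\,\pi(p(v))$ equals $\pi(g\,p(v))$, so the quantity to estimate becomes the average of $d_Y^{2}(\pi(p(s_g(v))),\pi(g\,p(v)))$. The hypothesis $p\in\Map(d_X,F,\delta',s)$ controls precisely the average of $d_X^{2}(p(s_g(v)),g\,p(v))$.

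Since $X$ is compact, $\pi$ is uniformly continuous, and this is where I would generate the relevant constants: fix the target error $\eta=\delta/\sqrt{2}$ and obtain $\kappa>0$ such that $d_X(a,b)<\kappa$ implies $d_Y(\pi(a),\pi(b))<\eta$ for all $a,b\in X$. The subtlety is that a small average of $d_X^{2}$ does not force every individual distance to be small, so I would split the index set $[n]$ at the threshold $\kappa$. For indices $v$ with $d_X(p(s_g(v)),g\,p(v))<\kappa$ the corresponding $d_Y^{2}$ term is at most $\eta^{2}$. For the remaining indices I would bound $d_Y^{2}$ crudely by $D^{2}$, where $D=\diam(Y)<\infty$, and control their proportion by a Chebyshev/Markov estimate: from $\tfrac{1}{n}\sum_v d_X^{2}(p(s_g(v)),g\,p(v))\le(\delta')^{2}$, the number of $v$ with $d_X\ge\kappa$ is at most $(\delta')^{2}n/\kappa^{2}$, a fraction at most $(\delta'/\kappa)^{2}$.

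Combining the two contributions yields
$$\frac{1}{n}\sum_{v=1}^{n}d_Y^{2}(\pi(p(s_g(v))),\pi(g\,p(v)))\le \eta^{2}+D^{2}(\delta'/\kappa)^{2},$$
and choosing $\delta'\le \kappa\delta/(D\sqrt{2})$ makes the right-hand side at most $\delta^{2}/2+\delta^{2}/2=\delta^{2}$, giving the desired bound $\le\delta$. Crucially, $\kappa$ and $D$ depend only on $\pi$, $d_X$, $d_Y$ and $Y$, not on $g$, $n$ or $s$, so a single choice of $\delta'$ works simultaneously for all finitely many $g\in F$ and for every $n$ and every $s\colon G\to\Sym(n)$, as required. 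I expect the main obstacle to be exactly this averaging step: uniform continuity only delivers pointwise control, so the argument must quantify how few indices can carry large distance, which is where the Chebyshev bound on the sofic-model defect enters.
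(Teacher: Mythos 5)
Your proof is correct: the uniform continuity of $\pi$ (from compactness of $X$), the $G$-equivariance rewriting $g\,\pi(p(v))=\pi(g\,p(v))$, the Chebyshev/Markov bound showing at most a $(\delta'/\kappa)^{2}$-fraction of indices can have $d_X$-defect $\geq\kappa$, and the crude diameter bound on those indices combine exactly as you state, and since $\kappa$ and $D$ depend only on $\pi$, $d_X$, $d_Y$, a single $\delta'\leq \kappa\delta/(D\sqrt{2})$ works uniformly over all $g\in F$, $n$, and $s$ (with the degenerate case $|Y|=1$ trivial). The paper gives no argument of its own --- it simply invokes Li's Lemma 2.10 --- and your proof is precisely the standard proof of that cited lemma, so you have reproduced the paper's approach.
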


\begin{lemma}
\label{lem:factor}
Let $\pi:\act\rightarrow G\curvearrowright Y$ be factor map and $(U,V)$ an open cover of $Y$. It holds that
$$
\mdim_{\Sigma} (\pi^{-1}(U,V))\leq\mdim_{\Sigma} ((U,V)).
$$
\end{lemma}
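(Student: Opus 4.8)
The plan is to reduce the statement about $\mdim_\Sigma$ of $\pi^{-1}(U,V)$ to a statement at the finite combinatorial level, namely about the quantity $\DD$ of the lifted cover restricted to the relevant $\Map$ sets, and then to push through the comparison $i$ by $i$ along the sofic approximation sequence, taking limits at the very end. The key observation is that $\pi^{-1}(U,V) = (\pi^{-1}U, \pi^{-1}V)$, so the lifted cover is precisely $\pi^{-1}\alpha$ where $\alpha=(U,V)$, and I want to compare $\DD$ of $(\pi^{-1}\alpha)^{[n]}$ restricted to $\Map(d_X,F,\delta',s)$ against $\DD$ of $\alpha^{[n]}$ restricted to $\Map(d_Y,F,\delta,s)$.

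First I would fix $F\in\mathcal{F}(G)$ and $\delta>0$, and invoke Lemma \ref{lem:Li} to obtain $\delta'>0$ such that for every $n$, every $s\colon G\to\Sym(n)$, and every $p\in\Map(d_X,F,\delta',s)$, the composite $\pi_{[n]}\circ p$ lands in $\Map(d_Y,F,\delta,s)$. This says exactly that $\pi_{[n]}\big(\Map(d_X,F,\delta',s)\big)\subseteq \Map(d_Y,F,\delta,s)$. Next, I would observe the cover identity $(\pi^{-1}\alpha)^{[n]} = \pi_{[n]}^{-1}(\alpha^{[n]})$, which follows directly from the definition of $\alpha^{[n]}$ as the product cover and the fact that preimage commutes with products. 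Combining these two facts, the restriction $(\pi^{-1}\alpha)^{[n]}\big|_{\Map(d_X,F,\delta',s)}$ is the pullback under $\pi_{[n]}$ of $\alpha^{[n]}$, restricted to a set whose image lies inside $\Map(d_Y,F,\delta,s)$.

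The heart of the argument is then a monotonicity estimate for $\DD$ under pullback along a continuous surjection when restricting to subsets that map into one another. Concretely, I would establish that for a continuous map $f\colon A\to B$, a finite open cover $\gamma$ of $B$, and a subset $S\subseteq A$ with $f(S)\subseteq T\subseteq B$, one has $\DD(f^{-1}(\gamma)|_S)\leq \DD(\gamma|_T)$. This is the restricted-subset analogue of Proposition \ref{prop:mdim_func}: given a refinement $\eta\succ\gamma$ witnessing $\DD(\gamma|_T)$, the pullback $f^{-1}(\eta)$ refines $f^{-1}(\gamma)$, and since $\ord(f^{-1}(\eta)|_S)\leq \ord(\eta|_{f(S)})\leq \ord(\eta|_T)$ (the first inequality because overlaps can only decrease under pullback, the second by $f(S)\subseteq T$), the bound propagates. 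Applying this with $f=\pi_{[n]}$, $\gamma=\alpha^{[n]}$, $S=\Map(d_X,F,\delta',s)$, $T=\Map(d_Y,F,\delta,s)$ yields
$$
\DD\big((\pi^{-1}\alpha)^{[n]}|_{\Map(d_X,F,\delta',s)}\big)\leq \DD\big(\alpha^{[n]}|_{\Map(d_Y,F,\delta,s)}\big),
$$
i.e. $\DD(\pi^{-1}\alpha, d_X, F, \delta', s)\leq \DD(\alpha, d_Y, F, \delta, s)$ for all $s$ (with the empty-map convention handled by the $-\infty$ values and the image containment).

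Finally I would divide by $n_i$, take $\varlimsup_{i\to\infty}$ to pass to $\mdim_\Sigma(\pi^{-1}\alpha, d_X, F,\delta')\leq \mdim_\Sigma(\alpha, d_Y, F,\delta)$, then take $\inf$ over $\delta$ (using that $\delta\to 0$ forces $\delta'\to 0$, or more cleanly that $\inf_{\delta'}\le\inf_\delta$ after the pairing given by Lemma \ref{lem:Li}) and $\inf$ over $F$ to reach $\mdim_\Sigma(\pi^{-1}(U,V))\leq \mdim_\Sigma((U,V))$. \emph{The main obstacle} I anticipate is the careful bookkeeping in the restricted monotonicity lemma for $\DD$: one must check that the refinement $\beta$ in Definition \ref{def:D_subset} is taken over covers of the ambient space $X^{[n]}$ rather than of the subset, and that pulling back a refinement of $\alpha^{[n]}$ indeed produces a legitimate refinement of $(\pi^{-1}\alpha)^{[n]}$ with the order controlled on the restricted domain — the subtlety flagged in Remark \ref{closed_case} about non-closed $Y$ means I should argue at the level of $\ord$ of pulled-back covers directly rather than relegate it to the closed case.
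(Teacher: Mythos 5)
Your proof is correct and takes essentially the same route as the paper's: Lemma \ref{lem:Li} supplies $\delta'$ with $\pi_{[n]}\big(\Map(d_X,F,\delta',s)\big)\subseteq \Map(d_Y,F,\delta,s)$, the pullback identities $(\pi^{-1}\alpha)^{[n]}=\pi_{[n]}^{-1}(\alpha^{[n]})$ and $\sum_{W\in \pi_{[n]}^{-1}\eta}1_{W}(p)=\sum_{U\in\eta}1_{U}(\pi_{[n]}(p))$ yield $\DD\big((\pi^{-1}(U,V))^{[n_i]}|_{\Map(d_X,F,\delta',s_i)}\big)\leq \DD\big((U,V)^{[n_i]}|_{\Map(d_Y,F,\delta,s_i)}\big)$, and the limits are handled exactly as in the paper (your ``cleaner'' $\inf_{\delta'}\leq\inf_{\delta}$ pairing is the right way, since Lemma \ref{lem:Li} does not force $\delta'\to 0$). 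Your packaged restricted-monotonicity lemma for $\DD$ under pullback is simply the paper's explicit min--max chain of inequalities stated as a standalone claim, with the ambient-cover subtlety of Definition \ref{def:D_subset} correctly attended to.
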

\begin{proof}


For every open cover, $\alpha$, of $Y^{[n]}$ and $p\in X^{[n]}$ it holds 
\[
\sum_{U'\in \alpha}1_{U'} (\pi_{[n]}(p))
 =\sum_{V'\in \pi_{[n]}^{-1 }\alpha}1_{V'} (p).
 \]
\noindent
Also note that if $\beta \succ (U,V)^{[n]}$, then $\pi_{[n]}^{-1}\beta \succ (\pi^{-1}(U,V))^{[n]}=\pi_{[n]}^{-1}(U,V)^{[n]}.$

Let $\delta>0$ and $i\in \N$.
 Using the previous comments and Lemma \ref{lem:Li}, there exists $\delta'>0$ so that

\begin{align*}
\min_{\beta \succ (\pi^{-1}(U,V))^{[n_i]}} \left(\max_{p \in \Map(d_X,F,\delta',s_i)} \sum_{U \in \beta} 1_{U}(p)\right) 
&\leq \min_{\beta \succ (U,V)^{[n_i]}} \left(\max_{p \in \Map(d_X,F,\delta',s_i)} \sum_{U \in \beta} 1_{U}(\pi_{[n_i]}(p))\right)  \\
&\leq \min_{\beta \succ (U, V)^{[n_i]}} \left(\max_{q \in \Map(d_Y,F,\delta,s_i)} \sum_{U \in \beta} 1_{U}(q)\right).
\end{align*}

Conclude 
\[
 \DD((\pi ^{-1}(U,V))^{[n_i]}|_{\Map(d_X, F, \delta', s_i)})\leq  \DD((U,V)^{[n_i]}|_{\Map(d_Y, F, \delta, s_i)}).
\]

Using this inequality and the formula for sofic mean dimension we obtain the result. 
 
\end{proof}
\begin{rem}\label{rem:strict_inequality}
Note that the inequality in Lemma \ref{lem:factor} may be strict. Indeed, let $\pi:\K\rightarrow \I$ be a continuous surjective map from the Cantor set to the interval. Recall that $\mdim_{\Sigma}(\K^\Z,\shift)=0 $ (\cite[Proposition 3.1]{LW}) and consider the map $\pi^{\otimes \Z}: (\K^\Z,\shift)\rightarrow (\I^\Z,\shift)$. 
\end{rem}
\begin{prop}
\label{prop:factor1}
Let $\pi:(X,G)\rightarrow (Y,G)$ be factor map. If $(x,y)\in \D_ms(X,G)$ and $\pi(x)\neq \pi(y)$ then $(\pi(x),\pi(y))\in \D_ms(Y,G)$.
\end{prop}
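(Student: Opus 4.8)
The plan is to follow the template of Theorem~\ref{thm:sofic IE properties}(2) for entropy pairs: transport an arbitrary distinguishing standard cover of $Y$ back to $X$ along $\pi$, and then invoke the factor monotonicity of Lemma~\ref{lem:factor}. By Definition~\ref{def:mdpairs}, to prove $(\pi(x),\pi(y))\in\D_ms(Y,G)$ I must show that \emph{every} standard open cover $(U,V)$ of $Y$ which distinguishes $(\pi(x),\pi(y))$ satisfies $\mdim_\Sigma((U,V))>0$; the hypothesis $\pi(x)\ne\pi(y)$ ensures $(\pi(x),\pi(y))\in Y^2\setminus\Delta_Y$, so this is the right thing to verify.

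First I would fix such a cover $(U,V)$ and consider its pullback $\pi^{-1}(U,V)=(\pi^{-1}U,\pi^{-1}V)$, an open cover of $X$ by continuity of $\pi$. The crux is to show that $\pi^{-1}(U,V)$ distinguishes $(x,y)$, that is $x\notin\overline{\pi^{-1}V}$ and $y\notin\overline{\pi^{-1}U}$. This rests on the elementary inclusion $\overline{\pi^{-1}(S)}\subseteq\pi^{-1}(\overline{S})$, which holds for any continuous map and any subset $S$: since $(U,V)$ distinguishes $(\pi(x),\pi(y))$ we have $\pi(x)\notin\overline{V}$, whence $x\notin\pi^{-1}(\overline{V})\supseteq\overline{\pi^{-1}V}$, and symmetrically $y\notin\overline{\pi^{-1}U}$. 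Because, as recorded after Definition~\ref{def:standard}, any cover distinguishing a pair is automatically standard, $\pi^{-1}(U,V)$ is a standard open cover of $X$ distinguishing $(x,y)$.

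With this in hand the proof closes quickly. Since $(x,y)\in\D_ms(X,G)$, the defining property of a mean dimension pair gives $\mdim_\Sigma(\pi^{-1}(U,V))>0$, and Lemma~\ref{lem:factor} supplies $\mdim_\Sigma(\pi^{-1}(U,V))\le\mdim_\Sigma((U,V))$; combining these yields $\mdim_\Sigma((U,V))>0$. As $(U,V)$ was arbitrary among distinguishing standard covers of $Y$, this establishes $(\pi(x),\pi(y))\in\D_ms(Y,G)$. The single genuine point to watch is the distinguishing step: one must use the correct direction of the closure/preimage inclusion and note that continuity of $\pi$ alone suffices, while the assumption $\pi(x)\ne\pi(y)$ is exactly what keeps the target pair off the diagonal where $\D_ms$-membership is defined. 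All the analytic content is absorbed into Lemma~\ref{lem:factor}, so no further estimates are needed.
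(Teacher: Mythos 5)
Your proof is correct and follows essentially the same route as the paper's: pull the distinguishing standard cover of $Y$ back along $\pi$, verify via the closure/preimage inclusion $\overline{\pi^{-1}(S)}\subseteq\pi^{-1}(\overline{S})$ (the paper phrases this equivalently through open neighbourhoods of $x$ and $y$ inside the complements) that the pullback distinguishes $(x,y)$, and then combine the defining property of $(x,y)\in\D_ms(X,G)$ with the monotonicity inequality of Lemma~\ref{lem:factor}. No gaps; the argument matches the paper's proof step for step.
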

\begin{proof}
Let $(x,y)\in \D_ms(X,G)$ with $\pi(x)\neq \pi(y)$ and $(U,V)$ an open cover that distinguishes $(\pi(x),\pi(y))$. By continuity of $\pi$, $(\pi ^{-1}(U))^c=\pi ^{-1}(U^c)$ contains an open neighbourhood of $x$ and $(\pi ^{-1}(V))^c$ contains an open neighbourhood of $y$. Hence, $(\pi ^{-1}(U), (\pi ^{-1}(V))$ is an open cover that distinguishes $(x,y)$. Furthermore, since by Lemma \ref{lem:factor} 
$$
0<\mdim_{\Sigma} ((\pi ^{-1}(U), \pi ^{-1}(V)))\leq \mdim_{\Sigma}((U,V)),
$$
conclude that $(\pi(x),\pi(y))\in \D_ms(Y,G)$.
\end{proof}
\begin{lemma}
\label{lem:diagonal}
Let $(X,G)$ be an action, $\Sigma$ a sofic approximation sequence for $G$ and $(x,y)\in \overline {\D_ms(X,G)}$. If $x\neq y$ then $(x,y)\in \D_ms(X,G)$. 
\end{lemma}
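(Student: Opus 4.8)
Lemma \ref{lem:diagonal} asserts that the set of sofic mean dimension pairs is closed away from the diagonal: if $(x,y)$ lies in the closure $\overline{\D_ms(X,G)}$ and $x \neq y$, then $(x,y)$ is itself a mean dimension pair.

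The plan is to argue directly from the definition of mean dimension pair. Fix $(x,y) \in \overline{\D_ms(X,G)}$ with $x \neq y$, and let $(U,V)$ be an arbitrary standard open cover that distinguishes $(x,y)$, meaning $x \notin \overline{V}$ and $y \notin \overline{U}$. Since $x \notin \overline{V}$ and $y \notin \overline{U}$, there exists $\epsilon > 0$ such that the $\epsilon$-balls satisfy $B_\epsilon(x) \cap \overline{V} = \emptyset$ and $B_\epsilon(y) \cap \overline{U} = \emptyset$. Because $(x,y)$ is in the closure of the set of mean dimension pairs, I can choose a pair $(x',y') \in \D_ms(X,G)$ with $x' \in B_\epsilon(x)$ and $y' \in B_\epsilon(y)$.

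The key observation is that the \emph{same} cover $(U,V)$ continues to distinguish the nearby pair $(x',y')$. Indeed, $x' \in B_\epsilon(x)$ forces $x' \notin \overline{V}$ (since $B_\epsilon(x)$ is disjoint from $\overline{V}$), and symmetrically $y' \notin \overline{U}$; thus $(U,V)$ distinguishes $(x',y')$ as well. Since $(x',y')$ is a genuine mean dimension pair and $(U,V)$ is a standard cover distinguishing it, the defining property of $\D_ms(X,G)$ (Definition \ref{def:mdpairs}) yields $\mdim_\Sigma((U,V)) > 0$. As $(U,V)$ was an arbitrary standard cover distinguishing $(x,y)$, this shows $(x,y) \in \D_ms(X,G)$, completing the proof.

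I expect no serious obstacle here; the whole argument is an $\epsilon$-perturbation exploiting that the ``distinguishing'' condition is open. The one point requiring mild care is bookkeeping the closure condition: the conditions $x \notin \overline{V}$ and $y \notin \overline{U}$ are open in $x$ and $y$ respectively, so one must verify they survive under the small translation to $(x',y')$ — this is exactly the role of choosing $\epsilon$ so that $B_\epsilon(x)$ and $B_\epsilon(y)$ avoid $\overline{V}$ and $\overline{U}$. The hypothesis $x \neq y$ is needed only to guarantee that such a genuinely distinguishing standard cover exists at all and that the resulting pair lies off the diagonal; it is not otherwise used in the estimate.
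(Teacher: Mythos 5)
Your proof is correct and follows essentially the same route as the paper: both exploit that the distinguishing condition $x\notin \overline{V}$, $y\notin \overline{U}$ is open, so any standard cover distinguishing $(x,y)$ also distinguishes a nearby pair from $\D_ms(X,G)$, forcing $\mdim_{\Sigma}((U,V))>0$. The only cosmetic difference is that the paper phrases the approximation via a sequence $(x_n,y_n)\to(x,y)$ while you pick a single $\epsilon$-close pair; the substance is identical.
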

\begin{proof}
Let $\{(x_n,y_n)\}_{n\in \N}\subset \D_ms(X,G)$ with $(x_n,y_n)\rightarrow (x,y)$, and $(U,V)$ an open cover that distinguishes $(x,y)$. There exist $n>0$ such that $(U,V)$ distinguishes $(x_n,y_n)$. Hence, $\mdim_{\Sigma} ((U,V))>0$. Thus  $(x,y)\in \D_ms(X,G)$. 
\end{proof}

\begin{rem}
 Let $\pi\colon \act \rightarrow G\curvearrowright Y$ be factor map and $(x,y)\in \D_ms(Y,G)$. It is possible that $\pi^{-1}(x)\times \pi^{-1}(y)\cap  \D_ms(X,G)= \emptyset$. 
 Indeed, consider again the map $\pi^{\otimes \Z}\colon (K^\Z,\shift)\rightarrow (\I^\Z,\shift)$ of Remark \ref{rem:strict_inequality}. 
\end{rem}

\noindent

Recall the correspondence between factors and equivalence relations in Proposition \ref{prop:factor-eq_rel}.

\begin{thm}\label{thm:eqrelation}
Let $\Sigma$ be a sofic approximation sequence for $G$,  $(X,G)$  an action with $\mdim_\Sigma (X,G)\geq 0$ and $\R_u$ the equivalence relation which corresponds to its universal zero mean dimension factor $(\X_u, G)$. The following holds:
\begin{enumerate}
    \item 
    
    The smallest closed $G$-invariant equivalence relation that contains $\D_ms(X,G)$ is contained in $\R_u$. 
    \item
    The action $\act$ has sofic CPMD if the smallest closed $G$-invariant equivalence relation that contains $\D_ms(X,G)$ is $X\times X$. 
\end{enumerate}

\end{thm}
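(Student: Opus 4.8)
The plan is to derive both parts from results already in hand, with the entire substance concentrated in the single set-containment $\D_ms(X,G)\subseteq \R_u$; I would prove this first and then read off (1) and (2) formally. This is the exact mean-dimension analogue of the entropy-pair argument proving the corollary that follows Theorem \ref{thm:sofic IE properties}, with Proposition \ref{prop:factor1} and Corollary \ref{cor:zmd} taking over the roles played there by the two parts of Theorem \ref{thm:sofic IE properties}.

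To establish the containment, fix $(x,y)\in \D_ms(X,G)$ and appeal to the characterization in Lemma \ref{lem:univ_sep}: it suffices to check that $(x,y)$ cannot be separated by a factor of non-positive sofic mean dimension. I would argue by contradiction, assuming there is a factor map $\pi\colon (X,G)\to (Y,G)$ with $\mdim_\Sigma(Y,G)\leq 0$ and $\pi(x)\neq\pi(y)$. Proposition \ref{prop:factor1} then pushes the pair forward to give $(\pi(x),\pi(y))\in \D_ms(Y,G)$, so in particular $\D_ms(Y,G)\neq\emptyset$; but Corollary \ref{cor:zmd} upgrades this to $\mdim_\Sigma(Y,G)>0$, contradicting the choice of $\pi$. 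Hence no separating factor of non-positive mean dimension exists, so $(x,y)\in\R_u$. This contrapositive is the only genuinely non-bookkeeping step, and its one delicate point is lining up hypotheses: Proposition \ref{prop:factor1} needs $\pi(x)\neq\pi(y)$, which is precisely the failure-of-separation assumption, while Corollary \ref{cor:zmd} is applied to the factor $(Y,G)$, not to $(X,G)$.

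With the containment in place, part (1) is immediate, since $\R_u$ is a closed $G$-invariant equivalence relation containing $\D_ms(X,G)$ and therefore contains the smallest such relation. For part (2), I would combine part (1) with Corollary \ref{cor:cpmd_iff_umdz_trivial}: if the smallest closed $G$-invariant equivalence relation containing $\D_ms(X,G)$ equals $X\times X$, then part (1) gives $X\times X\subseteq\R_u\subseteq X\times X$, so $\R_u=X\times X$, and since $\mdim_\Sigma(X,G)\geq 0$ by hypothesis with $|X|\geq 2$ the corollary yields sofic CPMD (the case $|X|=1$ is vacuous, having no non-trivial factors). I would also note the degenerate case $\D_ms(X,G)=\emptyset$: then the generated relation is $\Delta_X$, and part (1) holds trivially because $\Delta_X\subseteq\R_u$ always. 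No further estimates are needed.
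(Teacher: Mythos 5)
Your proposal is correct and follows essentially the same route as the paper: the core containment $\D_ms(X,G)\subseteq \R_u$ is established exactly as in the paper's proof, by contradiction via Lemma \ref{lem:univ_sep}, Proposition \ref{prop:factor1} and Corollary \ref{cor:zmd}, with parts (1) and (2) then read off formally. The only cosmetic deviation is that for (2) you invoke Corollary \ref{cor:cpmd_iff_umdz_trivial} (correctly checking its hypotheses $|X|\geq 2$ and $\mdim_\Sigma(X,G)\geq 0$) where the paper concludes directly from the definition of $\R_u$; this is an equivalent bookkeeping choice, not a different argument.
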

\begin{proof}
We start by proving (1). Let $Q$ be the smallest closed $G$-invariant equivalence relation that contains $\D_ms(X,G)$ and let $\R_u$ be the equivalence relation which corresponds to $(\X_u,G)$. Note it is enough to prove $\D_ms(X,G)\subset \R_u$ in order to prove $Q\subset \R_u$. Let $(x,y)\in \D_ms(X,G)$ and assume for a contradiction that $(x,y)\notin \R_u$. By Lemma \ref{lem:univ_sep}, there is a factor map $\pi:(X,G)\rightarrow (Y,G)$ with $\mdim_{\Sigma}(Y,G)\leq 0$ so that $\pi(x)\neq \pi (y)$. By Proposition \ref{prop:factor1}, $(\pi(x),\pi(y))\in \D_ms(Y,G)$. By Corollary \ref{cor:zmd}, $\mdim_{\Sigma}(Y,G)>0$. Contradiction. Conclude $Q\subset \R_u$. 

We now claim (2) follows from (1). Indeed, if the smallest closed $G$-invariant equivalence relation that contains $\D_ms(X,G)$ is $X\times X$, then by (1), $\R_u=X\times X$. By the definition of $\R_u$ there cannot be a non-trivial factor with non-positive sofic mean dimension.  
\end{proof}

A direct consequence of Theorem \ref{thm:eqrelation} is the following.

\begin{corollary}\label{cor:upmd implies cpmd}
Every non-trivial action\footnote{That is, it has more than one element.} that has sofic UPMD has sofic CPMD. 
\end{corollary}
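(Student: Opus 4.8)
The plan is to derive Corollary \ref{cor:upmd implies cpmd} directly from Theorem \ref{thm:eqrelation}(2) together with the characterization of sofic UPMD in terms of mean dimension pairs established in Proposition \ref{prop:upmd}. The only slightly delicate point is verifying that the hypothesis $\mdim_\Sigma(X,G)\geq 0$ required by Theorem \ref{thm:eqrelation} is automatically satisfied, so that the machinery applies.

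First I would note that the assumption is that $\act$ has sofic UPMD and is non-trivial, i.e.\ $|X|\geq 2$. By Proposition \ref{prop:upmd}, sofic UPMD is equivalent to the statement $\D_ms(X,G)=X^2\setminus \Delta_X$. Since $|X|\geq 2$, the off-diagonal $X^2\setminus \Delta_X$ is non-empty, so in particular $\D_ms(X,G)\neq \emptyset$. By Corollary \ref{cor:zmd} this already forces $\mdim_\Sigma(X,G)>0$, and in particular $\mdim_\Sigma(X,G)\geq 0$, which is exactly the standing hypothesis needed to invoke Theorem \ref{thm:eqrelation}.

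Next I would identify the smallest closed $G$-invariant equivalence relation $Q$ containing $\D_ms(X,G)$. Since $\D_ms(X,G)=X^2\setminus \Delta_X$, any equivalence relation containing $\D_ms(X,G)$ contains every off-diagonal pair and, being reflexive, also contains $\Delta_X$; hence it equals $X\times X$. Therefore $Q=X\times X$, and the hypothesis of Theorem \ref{thm:eqrelation}(2) is met. Applying that result yields that $\act$ has sofic CPMD, completing the argument.

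The main obstacle here is essentially bookkeeping rather than any genuine difficulty: one must be careful that the phrase ``smallest closed $G$-invariant equivalence relation containing $\D_ms(X,G)$'' genuinely collapses to $X\times X$, which requires knowing that $\D_ms(X,G)$ fills out the entire off-diagonal (supplied by Proposition \ref{prop:upmd}) and that adding the diagonal and taking the generated equivalence relation cannot overshoot $X\times X$. I would also double-check that non-triviality ($|X|\geq 2$) is what guarantees $\D_ms(X,G)\neq\emptyset$, so that the positivity hypothesis of Theorem \ref{thm:eqrelation} is legitimately in force; without $|X|\geq 2$ the set $X^2\setminus\Delta_X$ would be empty and the whole chain would degenerate.
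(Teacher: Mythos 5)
Your proof is correct and follows essentially the same route as the paper: identify $\D_ms(X,G)=X^2\setminus\Delta_X$ via Proposition \ref{prop:upmd}, verify $\mdim_\Sigma(X,G)>0$ so that Theorem \ref{thm:eqrelation} applies, and conclude from the fact that the generated closed $G$-invariant equivalence relation is all of $X\times X$. The only (immaterial) differences are that you invoke Theorem \ref{thm:eqrelation}(2) directly where the paper uses part (1) together with Corollary \ref{cor:cpmd_iff_umdz_trivial}, and you obtain positivity of the mean dimension from Corollary \ref{cor:zmd} rather than from applying UPMD to a standard cover.
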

\begin{proof}
    Let $\act$ be an action with sofic UPMD. By Proposition \ref{prop:upmd},  $\D_ms(X,G)= (X\times X)\setminus \Delta_X$. As $|X|\geq 2$, $X$ has standard open covers and thus as it has sofic UPMD it holds $\mdim_\Sigma (X,G)>0$. Thus we may invoke Theorem \ref{thm:eqrelation} to conclude $(X\times X)\setminus \Delta_X\subset \R_u$ which implies $\R_u=X\times X$. Invoking Corollary \ref{cor:cpmd_iff_umdz_trivial} completes the proof.
\end{proof}

We now recall the important result of Lindenstrauss and Weiss that the shift $\Z\curvearrowright [0,1]^\Z$ has completely positive mean dimension (\cite[Theorem 3.6]{LW}, which was generalized by Li for actions of sofic groups (\cite[Theorem 7.5]{L12}).
\begin{thm}\label{thm:Bernoulli_CPMD}
[Lindenstrauss and Weiss; Li]
Let $A$ be a path-connected compact metrizable space, and $G$ a sofic group. The left shift action of $G$ on $A^G$ has sofic CPMD.
\footnote{There also exist minimal systems with completely positive mean dimension (see \cite[Page 12]{LW} and \cite[Section 5]{L95}).} 
\end{thm}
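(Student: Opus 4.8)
The plan is to reduce the statement to the machinery developed in the paper, in particular Proposition \ref{prop:AG_UPMD} (Li's result that $G \curvearrowright A^G$ has sofic UPMD when $A$ is path-connected) and Corollary \ref{cor:upmd implies cpmd} (every non-trivial sofic UPMD action has sofic CPMD). Since the conclusion we want is precisely sofic CPMD for the shift on $A^G$, the strategy is essentially: verify the hypotheses of Corollary \ref{cor:upmd implies cpmd} and invoke it. The main ingredients are therefore already assembled in the excerpt, and the proof should be short.

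First I would observe that by Proposition \ref{prop:AG_UPMD}, the left shift action $G \curvearrowright A^G$ has sofic UPMD, using that $A$ is path-connected and compact metrizable and that $G$ is sofic. Second, I would check non-triviality: if $A$ is a single point the statement is vacuous (the shift is trivial and has no non-trivial factors), so we may assume $|A| \geq 2$, whence $|A^G| \geq 2$ and the action is non-trivial in the sense required (more than one element). Third, with these two facts in hand, I would apply Corollary \ref{cor:upmd implies cpmd} directly: every non-trivial action with sofic UPMD has sofic CPMD. This yields that $G \curvearrowright A^G$ has sofic CPMD, which is exactly the assertion of the theorem.

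An alternative, more self-contained route avoiding the UPMD-to-CPMD corollary would be to work through mean dimension pairs: by Corollary \ref{cor:cube_shift_pairs}, every pair $x \neq y$ in $A^G$ is a sofic mean dimension pair, so $\D_ms(A^G,G) = (A^G \times A^G)\setminus \Delta_{A^G}$; the smallest closed $G$-invariant equivalence relation containing this set is clearly $A^G \times A^G$ (as soon as $|A|\geq 2$), and one checks $\mdim_\Sigma(A^G,G) \geq 0$ so that Theorem \ref{thm:eqrelation}(2) applies to give sofic CPMD. Either route works; I would present the short one via Corollary \ref{cor:upmd implies cpmd} as the primary argument and perhaps remark on the mean-dimension-pairs formulation.

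The step I expect to carry the real content is Proposition \ref{prop:AG_UPMD}, i.e. Li's theorem that the shift on $A^G$ has sofic UPMD for path-connected $A$ — this is where path-connectedness is genuinely used (to guarantee that restricting to any standard cover still detects positive mean dimension), and it is the substantive input quoted from \cite{L12}. Everything after it is bookkeeping. The only subtlety worth flagging is the degenerate case $|A|=1$, which must be excluded or handled separately since Corollary \ref{cor:upmd implies cpmd} requires a non-trivial action; this is a triviality but should be stated explicitly to keep the logic clean.
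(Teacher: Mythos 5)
Your proof is correct and follows essentially the same route as the paper, which likewise deduces the theorem by combining Proposition \ref{prop:AG_UPMD} (sofic UPMD for the shift on $A^G$) with Corollary \ref{cor:upmd implies cpmd}. Your explicit handling of the degenerate case $|A|=1$ is a harmless (indeed slightly more careful) addition that the paper leaves implicit.
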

\begin{proof}
    By Corollary \ref{prop:AG_UPMD} $(A^G,G)$ has sofic UPMD. By Corollary \ref{cor:upmd implies cpmd} it has  sofic CPMD.
\end{proof}

\begin{question}
 Does there exist an action $\act$ with sofic CPMD such that the smallest closed $G$-invariant equivalence relation that contains $D_{md}(X,G)$ is not $X\times X$?   
\end{question}

\section{The mean dimension map is Borel}\label{sec:mdim_Borel}

Given a metrizable space, $(Y,d)$, denote by $K(Y)$ the space of all non-empty compact subsets of $Y$.  We will use the \textbf{Hausdorff metric} as follows on $K(Y)$. For $A,B\in K(Y)$, define
$$
d_H(A,B)=\inf \{\delta>0: A\subset B_{\delta}\text{ and } B\subset A_{\delta}\},
$$
where $C_{\delta}=\{x\in Y:d(x,C)<\delta\}$.
If $Y$ is compact, then so is $K(Y)$. For more information, see \cite[Chapter 4.F]{K95}. We equip $K(Y)$ with the topology given by the Hausdorff metric.

\begin{rem}
\label{rem:sofic}
For the rest of the paper we fix a compact metrizable space $A$, a sofic group $G$ and  a sofic approximation sequence $\Sigma$ for $G$. Also, we will use $G\curvearrowright A^G$ to denote the left shift action and $d$ to denote a compatible metric of the product topology $A^G$.
\end{rem}
Define 
$$
\S(A)=\{X\subset A^{G}:X\text{ is closed non-empty, and } \forall g\in G\,\,\,gX= X\}.
$$
Note $\S(A)$ is a closed subset of $K (A^{G})$. Thus $\S(A)$ equipped with the induced topology, is a compact metrizable space, and hence  Polish.

Let $Y$ be a metrizable space. Recall that a function $f\colon Y\rightarrow \overline{\RR}$ is called \textbf{lower semi-continuous} if for all $y\in Y$,  $
 \liminf_{y'\rightarrow y} f(y')\geq   f(y).
 $

\begin{lemma}\label{lem:semi-cont}
Let $F\in \mathcal{F}(G)$, $n\in \N$, $\delta>0$, $\beta$ a finite open cover of $(A^G)^{[n]}$ and $\Sigma\ni s\colon G\to	\Sym(n) $. The function
$$
f_{F,\delta,s,\beta}\colon \S(A)\rightarrow \N\cup \{0,-\infty\},
$$
defined by $f_{F,\delta,s,\beta}(X)= \ord(\beta|_{\Map'(d, F, \delta, s,X)})$ is lower semi-continuous and hence Borel. 
\end{lemma}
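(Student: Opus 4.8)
The plan is to show lower semi-continuity of $f_{F,\delta,s,\beta}$ directly from the definition. Recall that $\ord(\beta|_Y) = (\max_{p \in Y}\sum_{U \in \beta}1_U(p)) - 1$, and here $Y = \Map'(d,F,\delta,s,X)$. So establishing $\liminf_{X' \to X} f_{F,\delta,s,\beta}(X') \geq f_{F,\delta,s,\beta}(X)$ amounts to showing that the maximal multiplicity of covering-by-$\beta$ over the open mapping set cannot drop suddenly along a sequence $X_m \to X$ in the Hausdorff metric. The key structural fact I want to exploit is that $\Map'(d,F,\delta,s,X)$ is defined by \emph{strict} inequalities (it is open in $(A^G)^{[n]}$), and crucially that its defining condition depends on $X$ only through the constraint $\varphi([n]) \subset X$, i.e.\ the maps must take values in $X$.

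First I would unwind what $f_{F,\delta,s,\beta}(X) \geq \ell+1$ means: there exists a point $p = (\varphi(1),\dots,\varphi(n)) \in (A^G)^{[n]}$ with every coordinate $\varphi(v) \in X$, satisfying the strict Map$'$ inequalities $\big(\frac1n\sum_v d^2(\varphi(s_g v), g\varphi(v))\big)^{1/2} < \delta$ for all $g \in F$, and lying in at least $\ell+1$ members of $\beta$. The goal is then to perturb this witness into one valid for a nearby $X'$. The main step is: given $X_m \to X$ in $d_H$, I choose for each coordinate $\varphi(v)$ a point $\varphi_m(v) \in X_m$ with $d(\varphi_m(v),\varphi(v)) \leq d_H(X_m,X) \to 0$ (possible by definition of the Hausdorff metric, since $X \subset (X_m)_\delta$). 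Because $\beta$ is \textbf{open}, once $\varphi_m(v)$ is close enough to $\varphi(v)$ the perturbed point $p_m = (\varphi_m(1),\dots,\varphi_m(n))$ still lies in the same $\geq \ell+1$ members of $\beta$; and because the Map$'$ constraints are \textbf{strict} and $d^2$ is continuous, $p_m$ still satisfies them for all $g \in F$ once $m$ is large. Hence $p_m$ witnesses $f_{F,\delta,s,\beta}(X_m) \geq \ell+1$ for all large $m$, giving $\liminf_m f_{F,\delta,s,\beta}(X_m) \geq f_{F,\delta,s,\beta}(X)$.

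I would handle the degenerate case separately: if $f_{F,\delta,s,\beta}(X) = -\infty$, i.e.\ $\Map'(d,F,\delta,s,X) = \emptyset$, then the inequality $\liminf \geq -\infty$ is vacuous, so nothing is needed there. This is exactly why the \emph{open} set $\Map'$ (rather than the closed $\Map$) is the right object: with strict inequalities and an open cover, both the membership-in-$\beta$ condition and the mapping-set condition are stable under small perturbations in the value of $p$, which is what Hausdorff-closeness of $X_m$ to $X$ provides. Finally, lower semi-continuity of a function into $\overline{\RR}$ implies that all sublevel sets $\{X : f(X) < c\}$ are open, hence Borel, so $f_{F,\delta,s,\beta}$ is Borel measurable, completing the claim.

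The step I expect to require the most care is the coordinatewise approximation $\varphi(v) \rightsquigarrow \varphi_m(v) \in X_m$: one must confirm that a single Hausdorff bound $d_H(X_m,X) \to 0$ simultaneously controls all $n$ coordinates, and that the product metric on $(A^G)^{[n]}$ makes $p_m \to p$, so that continuity of $d^2$ and openness of $\beta$ genuinely apply. Since $n$ is fixed and finite and $A^G$ is compact metrizable, this is routine but must be stated carefully, as it is the only place where the Hausdorff topology on $\S(A)$ enters. I do not anticipate a genuine obstacle, only bookkeeping around the finitely many coordinates and the members of $F$ and $\beta$.
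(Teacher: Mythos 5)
Your proposal is correct and follows essentially the same route as the paper: both fix a witness $\phi_0$ attaining $\ord(\beta|_{\Map'(d,F,\delta,s,X)})$, use the Hausdorff metric to perturb it coordinatewise into a map landing in a nearby $X'$, and exploit the openness of the members of $\beta$ together with the strictness of the $\Map'$ inequalities (the paper makes this quantitative with an explicit $\epsilon$-estimate producing a uniform neighborhood $\mathcal{W}$ on which $f$ does not drop, whereas you argue softly via sequences, which is equivalent since $\S(A)$ is metrizable). One small correction to your last step: lower semi-continuity means the \emph{superlevel} sets $\{X : f_{F,\delta,s,\beta}(X) > c\}$ are open (openness of $\{f < c\}$ is the characterization of \emph{upper} semi-continuity), but Borel measurability follows just the same.
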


\begin{proof}

Fix $X\in \S(A)$. We will show $f_{F,\delta,s,\beta}$ is lower semi-continuous at $X$ by showing a slightly stronger property. Namely, we will show that there exists an open neighborhood $\mathcal{W}$ of $X$ such that for every $Y\in \mathcal{W}$, it holds $f_{F,\delta,s,\beta}(Y)\geq f_{F,\delta,s,\beta}(X)$.
If $\Map'(d, F, \delta, s,X)=\emptyset$, equivalently $f_{F,\delta,s,\beta}(X)=-\infty$, then there is nothing to prove. Assume $\Map'(d, F, \delta, s,X)\neq\emptyset$ and fix
 $\phi_0\in \Map'(d, F, \delta, s,X)$ such that $$\ord(\beta|_{\Map'(d, F, \delta, s,X)})=\sum_{U\in \beta}1_{U}(\phi_0)-1.$$
 For every $g\in F$ it holds, $$n\delta_g^2:=\sum_{v=1}^{n}d^{2}(\phi_0(s_g(v)),g\phi_0(v))< n\delta^2.$$
 Denote $D=\diam(A^G)$. Choose $\epsilon>0$ such that for all $g\in F$
 \begin{align}
 \label{f1}
2\epsilon^2 +6D\epsilon + \delta_g^2< \delta^2. 
 \end{align}
  Choose $0<\epsilon'<\epsilon$ such that  for all $x,y\in X$ and all $g\in F$ (a finite set) with $d(x,y)\leq \epsilon'$ implies
  \begin{align}
 \label{f2}
d(gx,gy)\leq \epsilon.
\end{align}
Let $0<\rho<\epsilon'$ be small enough so that the open ball of radius  $\rho$ with respect to $d_{\infty}$, the maximum
 metric on $X^{[n]}$, centered at $\phi_0$ is contained in the open set $\bigcap_{\phi_0\in U\in \beta}U$. We will denote this ball by $V\subset (A^G)^{[n]}$. Note that for every $\phi\in V$
\begin{align}
 \label{f3}
  \sum_{U\in \beta}1_{U}(\phi)\geq \sum_{U\in \beta}1_{U}(\phi_0)=\ord(\beta|_{\Map'(d, F, \delta, s,X)})+1.
 \end{align}
Let $\mathcal{W}:=\{X'\in \S(A)|\, d_H(X,X')<\rho\}$ be an open ball in $\S(A)$. Fix $X'\in \mathcal{W}$. By the definition of the Hausdorff distance one may choose a function   
$\phi_1:[n]\to X'$ ($X'$, not $X$) having the property that for all $v\in [n]$,
\begin{align}
\label{f4}
  d(\phi_0(v),\phi_1(v))\leq d_H(X,X')<\rho<\epsilon'.
 \end{align}
Thus $\phi_1\in V$. Fix $g\in F$. We now calculate using Equations  \eqref{f2} and \eqref{f4}:
 \begin{align*}
\sum_{v=1}^{n}d^{2}(\phi_1(s_g(v)),g\phi_1(v))
&\leq 
\sum_{v=1}^{n}
\big(d(\phi_1(s_g(v)),\phi_0(s_g(v)))+ d(\phi_0(s_g(v)),g\phi_0(v))+d(g\phi_0(v),g\phi_1(v))\big)^2\\
&:=\sum_{v=1}^{n} (a_v+b_v+c_v)^2
\end{align*}
Using this notation notice that for all $v$, $a_v< \epsilon'< \epsilon$ and $c_v\leq \epsilon$. In addition $\sum_{v=1}^{n} b_v^2=n\delta_g^2$.  Thus we may calculate using Equation \eqref{f1}:
\begin{align*}
\sum_{v=1}^{n}d^{2}(\phi_1(s_g(v)),g\phi_1(v))
&\leq \sum_{v=1}^{n} (a_v^2+b_v^2+c_v^2 +2 a_v b_v+2 a_v c_v+2 b_v c_v)
\\
&< n (\epsilon^2+ \delta_g^2+ \epsilon^2 +6D\epsilon)< n \delta^2.
\end{align*}
\noindent
Conclude $\phi_1\in \Map'(d, F, \delta, s,X')$. 
Thus by  Equation \eqref{f3}, $\ord(\beta |_{\Map'(d, F, \delta, s,X')})\geq -1+\sum_{U\in \beta}1_{U}(\phi_1)\geq \ord(\beta |_{\Map'(d, F, \delta, s,X)})$ which was what we seeked to show. 
    
\end{proof}

\begin{deff} \label{def:U}
Let $Y$ be a compact metrizable space. For every $n\in\N$ we fix a finite open cover of $Y$ of balls of diameter $1/n$ and denote it by $\alpha_n(Y)$. Let $\mathcal{U}_n(Y)$ be the family of open covers generated by unions of elements of $\alpha_n(Y)$, and $\mathcal{U}(Y)=\bigcup_{n\in \N} \mathcal{U}_n(Y)$.
If $Y=A^G$, then we drop $(Y)$ from the notation, i.e., we write $\alpha_n$ and $\mathcal{U}$.

\end{deff}

\begin{lemma}
\label{lem:U_construction}
   Let $Y$ be a compact metrizable space and  $\gamma=\{U_1,\ldots, U_m\}$  a finite open cover of $Y$. There exists a finite open cover $\beta=\{V_1,\ldots V_m\}\in \mathcal{U}(Y)$ so that for all $i$, $V_i\subset U_i$. 

\end{lemma}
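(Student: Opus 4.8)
The plan is to invoke the Lebesgue number lemma for the finite open cover $\gamma$ of the compact metric space $Y$. First I would fix the compatible metric on $Y$ and extract a Lebesgue number $\lambda>0$ of $\gamma$, so that every subset of $Y$ of diameter less than $\lambda$ is contained in at least one member of $\gamma$; this is where compactness is used. Choosing $n\in\N$ with $1/n<\lambda$, every element $B\in\alpha_n(Y)$ satisfies $\diam(B)\le 1/n<\lambda$ and is therefore contained in some $U_{j}$, $j\in[m]$.

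Next I would set up the index-wise matching. For each ball $B\in\alpha_n(Y)$, select (arbitrarily) an index $i(B)\in[m]$ with $B\subset U_{i(B)}$, and define $V_i=\bigcup\{B\in\alpha_n(Y):i(B)=i\}$ for each $i\in[m]$. By construction each $V_i$ is a union of elements of $\alpha_n(Y)$, so $\beta=\{V_1,\ldots,V_m\}\in\mathcal{U}_n(Y)\subset\mathcal{U}(Y)$. Moreover $V_i\subset U_i$, since $V_i$ is a union of sets each contained in $U_i$. Finally $\bigcup_{i=1}^{m} V_i=\bigcup_{B\in\alpha_n(Y)}B=Y$ because $\alpha_n(Y)$ already covers $Y$, so $\beta$ is an open cover of $Y$ with the required property $V_i\subset U_i$ for all $i$.

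The only delicate point, which I expect to be the main (and rather minor) obstacle, is the bookkeeping about empty members: a priori some index $i$ may be assigned no ball, so that $V_i=\emptyset$. This does not affect the covering property nor the containment $V_i\subset U_i$, and the cleanest resolution is to permit the empty union among the elements listed in $\mathcal{U}_n(Y)$, keeping the matched enumeration $\{V_1,\ldots,V_m\}$ intact; this matched indexing is exactly what the subsequent arguments require. Everything else is routine, the substance being entirely contained in the Lebesgue number estimate and the observation that a union of balls from $\alpha_n(Y)$ is by definition a member of $\mathcal{U}_n(Y)$.
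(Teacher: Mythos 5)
Your proof is correct and matches the paper's argument in essence: the paper likewise applies the Lebesgue number lemma to obtain $n$ with $\alpha_n(Y)\succ\gamma$ and sets $V_i=\bigcup\{W\in\alpha_n(Y):W\subset U_i\}$. Your only deviation is assigning each ball to a single chosen index $i(B)$ rather than to every admissible index, a harmless variant (both versions may produce empty $V_i$'s, which, as you note, affects neither the covering property nor the containments).
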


\begin{proof}
 By Lebesgue's number lemma, there exists $n\in \N$ such that $\alpha_n\succ \gamma$.  Define
 $$V_i=\cup \{W\in \alpha_n: W\subset U_i\}.$$ 

 Clearly $\beta=\{V_1,\ldots, V_m\}$ has the desired properties. 

\end{proof}

\begin{deff}\label{def:gamma_U}
    Let $\gamma$ be a finite open cover of $A^G$. In the sequel, we denote the open cover $\beta$ from the previous lemma by $\gamma|_{\mathcal{U}}.$\footnote {In order to have a unique $\beta$, one can take in the proof of Lemma \ref{lem:U_construction} the minimal $n\in \N$ such that $\alpha_n\succ \gamma$.}
\end{deff}

Recall Definition \ref{def:D_subset}.
\begin{lemma}
\label{lem:dim}
   Let $Y$ be a compact metrizable space and $\alpha$ be a finite open cover of $Y$. Suppose $\emptyset \neq Z\subset Y$ ($Z$ not necessarily closed). It holds that 
    $$
\DD(\alpha|_{Z})=\min_{\mathcal{U}(Y)\ni\beta \succ \alpha}\ord(\beta|_{Z}).
$$
\end{lemma}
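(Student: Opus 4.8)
The plan is to prove the two inequalities separately. Observe first that $\mathcal{U}(Y)$ consists of finite open covers of $Y$, so the covers $\beta \in \mathcal{U}(Y)$ with $\beta \succ \alpha$ form a subfamily of \emph{all} finite open covers of $Y$ refining $\alpha$. Since $\DD(\alpha|_Z)$ is by definition (Definition \ref{def:D_subset}) the minimum of $\ord(\beta|_Z)$ over the full family, minimizing over the smaller subfamily can only increase the value. This immediately yields
$$
\DD(\alpha|_Z) \le \min_{\mathcal{U}(Y)\ni \beta \succ \alpha}\ord(\beta|_Z),
$$
and no work is required for this direction.

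For the reverse inequality, I would fix an arbitrary finite open cover $\beta = \{U_1,\ldots,U_m\}$ of $Y$ with $\beta \succ \alpha$ and produce a cover in $\mathcal{U}(Y)$ that refines $\alpha$ and has order relative to $Z$ no larger than that of $\beta$. Lemma \ref{lem:U_construction} supplies exactly the tool: it yields a cover $\beta' = \{V_1,\ldots,V_m\} \in \mathcal{U}(Y)$ of $Y$ with $V_i \subset U_i$ for every $i$. Two verifications then remain. First, $\beta' \succ \alpha$: since $\beta \succ \alpha$, each $U_i$ is contained in some member $W \in \alpha$, and $V_i \subset U_i \subset W$, so $\beta'$ refines $\alpha$. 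Second, $\ord(\beta'|_Z) \le \ord(\beta|_Z)$: because $V_i \subset U_i$ we have $1_{V_i}(x) \le 1_{U_i}(x)$ for every $x$, hence $\sum_{i} 1_{V_i}(x) \le \sum_{i} 1_{U_i}(x)$ pointwise; taking the maximum over $x \in Z$ and subtracting $1$ gives the claimed inequality.

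Combining these, for every $\beta \succ \alpha$ there is $\beta' \in \mathcal{U}(Y)$ with $\beta' \succ \alpha$ and $\ord(\beta'|_Z) \le \ord(\beta|_Z)$; in particular $\min_{\mathcal{U}(Y)\ni \beta' \succ \alpha}\ord(\beta'|_Z) \le \ord(\beta|_Z)$. Taking the minimum over all $\beta \succ \alpha$ gives
$$
\min_{\mathcal{U}(Y)\ni \beta \succ \alpha}\ord(\beta|_Z) \le \DD(\alpha|_Z),
$$
which together with the first inequality proves the lemma. I expect no serious obstacle here: the argument is entirely elementary once Lemma \ref{lem:U_construction} is available, and the only points requiring care are the bookkeeping that shrinking the members of a cover preserves both refinement of $\alpha$ and the covering property (the latter guaranteed by Lemma \ref{lem:U_construction}), while it can only decrease the order relative to $Z$.
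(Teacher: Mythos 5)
Your proof is correct and takes essentially the same approach as the paper: the paper's proof also applies Lemma \ref{lem:U_construction} to an arbitrary finite open cover $\beta'\succ\alpha$ to obtain a shrunken cover $\beta'_{\mathcal{U}}\in\mathcal{U}(Y)$ with $V_i\subset U_i$, and concludes $\ord(\beta'_{\mathcal{U}}|_{Z})\le\ord(\beta'|_{Z})$ exactly as you do. The only difference is presentational: the paper leaves implicit both the trivial subfamily inequality and the check that the shrunken cover still refines $\alpha$, which you spell out.
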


\begin{proof}
Let $\beta'\succ \alpha$ and write $\beta'=\{U_1,\ldots, U_m\}$. Denote $\beta=\beta'_{\mathcal{U}}\in \mathcal{U}(Y)$. As $\beta=\{V_1,\ldots, V_m\}$ for some $V_i\subset U_i$ for all $i=1,\ldots, m$ it holds $\ord(\beta|_{Z})\leq \ord(\beta'|_{Z})$. This concludes the proof. 
\end{proof}

Recall Remark \ref{rem:notation}.

\begin{lemma}
\label{cor:coverborel}
    
    Let $\alpha$ a finite open cover of $A^G$. The function\footnote{The ranges of various mean dimension functions is discussed in Subsection \ref{subsec:sofic mdim}.} 
   $g_\alpha\colon \S(A)\rightarrow \mathbb{R}\cup \{-\infty\}$ defined by $g_\alpha(X)=\mdim_{\Sigma}(\alpha|_X)$ is a Borel function.
    In particular $$
    \{X\in \S(A):\mdim_{\Sigma}(\alpha|_X)>0\}
    $$

    is a Borel subset of $\S(A)$. 
\end{lemma}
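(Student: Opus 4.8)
The plan is to express the mean dimension $\mdim_{\Sigma}(\alpha|_X)$ as a countable combination of the semi-continuous functions established in Lemma \ref{lem:semi-cont}, and thereby conclude Borel measurability. Recall from the definitions in Subsection \ref{subsec:sofic mdim} together with Lemma \ref{lem:D'} that
\[
\mdim_{\Sigma}(\alpha|_X,F)=\lim_{\delta\to 0}\varlimsup_{i\to\infty}\frac{\DD'(\alpha|_X, d, F, \delta, s_i)}{n_i},
\]
and $\mdim_{\Sigma}(\alpha|_X)=\inf_{F}\mdim_{\Sigma}(\alpha|_X,F)$. The key reduction is that $\DD'(\alpha, d, F, \delta, s,X)=\DD(\alpha^{[n]}|_{\Map'(d,F,\delta,s,X)})$, and by Lemma \ref{lem:dim} this equals $\min_{\mathcal{U}((A^G)^{[n]})\ni\beta\succ\alpha^{[n]}}\ord(\beta|_{\Map'(d,F,\delta,s,X)})$. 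The crucial gain here is that $\mathcal{U}((A^G)^{[n]})$ is a \emph{countable} family of open covers (since each $\alpha_n$ is a fixed finite cover, the unions of its elements form a finite set, and the union over $n$ is countable), so this minimum is a countable minimum of functions $X\mapsto\ord(\beta|_{\Map'(d,F,\delta,s,X)})=f_{F,\delta,s,\beta}(X)$, each of which is Borel by Lemma \ref{lem:semi-cont}.

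First I would fix $F$, $\delta$, $i$ and observe that
\[
X\longmapsto \DD'(\alpha, d, F, \delta, s_i,X)=\min_{\mathcal{U}((A^G)^{[n_i]})\ni\beta\succ\alpha^{[n_i]}}f_{F,\delta,s_i,\beta}(X)
\]
is Borel, being a countable minimum (equivalently infimum) of Borel functions. Dividing by $n_i$ preserves Borel measurability. Next, $X\mapsto\varlimsup_{i\to\infty}\DD'(\alpha,d,F,\delta,s_i,X)/n_i$ is Borel since a pointwise $\limsup$ of countably many Borel functions is Borel. Then I would take the limit as $\delta\to 0$: by Remark \ref{R-mean top dim1} the inner $\limsup$ is monotone in $\delta$, so the limit may be replaced by an infimum (or a countable limit along $\delta=1/k$), again preserving Borel measurability and yielding that $X\mapsto\mdim_{\Sigma}(\alpha|_X,F)$ is Borel. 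Finally, taking $\inf$ over $F\in\mathcal{F}(G)$—a countable infimum, since $G$ is countable and hence $\mathcal{F}(G)$ is countable—shows $g_\alpha$ is Borel. The ``in particular'' clause is then immediate, since $\{X:g_\alpha(X)>0\}=g_\alpha^{-1}((0,\infty])$ is the preimage of a Borel set under a Borel map.

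The main obstacle I anticipate is justifying that one may restrict the minimum defining $\DD'$ to the countable family $\mathcal{U}$ rather than over \emph{all} refinements $\beta\succ\alpha^{[n_i]}$; this is precisely the content of Lemma \ref{lem:dim}, whose applicability rests on working with the subset $Z=\Map'(d,F,\delta,s,X)$ of the \emph{fixed} ambient space $(A^G)^{[n_i]}$ (so that $\mathcal{U}((A^G)^{[n_i]})$ does not depend on $X$). One subtle point deserving care is that $\Map'$, not $\Map$, must be used throughout, because the semi-continuity in Lemma \ref{lem:semi-cont} crucially exploits the open (strict inequality) condition defining $\Map'$; the passage from $\mdim_{\Sigma}(\alpha|_X,F)$ computed via $\Map$ to the formula via $\Map'$ is exactly what Lemma \ref{lem:D'} supplies. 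A further bookkeeping point is that the covers $\beta$ range over $\mathcal{U}((A^G)^{[n_i]})$ refining $\alpha^{[n_i]}$, and one must check this refinement constraint is compatible with the definition of $f_{F,\delta,s,\beta}$; since refinement is a condition on $\beta$ alone (independent of $X$), it simply selects a fixed countable subfamily of covers, leaving the minimum a countable minimum of Borel functions as required.
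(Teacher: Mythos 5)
Your proposal is correct and follows essentially the same route as the paper's own proof: pass from $\Map$ to $\Map'$ via Lemma \ref{lem:D'}, reduce the minimum defining $\DD'$ to the countable family $\mathcal{U}$ via Lemma \ref{lem:dim}, invoke the Borelness of the functions $f_{F,\delta,s,\beta}$ from Lemma \ref{lem:semi-cont}, and conclude by countably many operations (minimum, $\varlimsup$ over $i$, and a monotone limit in $\delta$ together with an infimum over the countable family $\mathcal{F}(G)$). If anything, your insistence on ranging over covers of the fixed ambient space $(A^G)^{[n_i]}$, so that the countable family of refinements does not vary with $X$, is slightly more careful on a bookkeeping point that the paper's notation $\mathcal{U}(X^{[n]})$ glosses over.
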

\begin{proof}
Note that part of the statement is that the range of  $g_\alpha$ is $\mathbb{R}\cup \{-\infty\}$ (in particular it does not include $\infty$).
By Lemma \ref{lem:semi-cont}, the function
$
f_{F,\delta,s,\beta}\colon \S(A)\rightarrow \N\cup \{0,-\infty\}
$
defined by $f_{F,\delta,s,\beta}(X)= \ord(\beta|_{\Map'(d, F, \delta, s,X)})$ is  Borel. 
\noindent
By Lemma \ref{lem:D'}, equation \eqref{eq:D'} and  Lemma \ref{lem:dim}, we obtain
\begin{align*}
    g_\alpha(X)=\mdim_\Sigma(\alpha|_X) &= 
\lim_{(F, \delta)\to (\infty,0)} 
\varlimsup_{i\to\infty} \frac{\DD'(\alpha|_X, d, F, \delta, s_i)}{n_i}\\ &= \lim_{(F, \delta)\to (\infty,0)} \varlimsup_{i\to\infty} \frac{\DD((\alpha|_X)^{[n]}|_{\Map'(d, F, \delta, s_i,X)})}{n_i}\\&= \lim_{(F, \delta)\to (\infty,0)} \varlimsup_{i\to\infty} \frac{\min_{\mathcal{U}(X^{[n]})\ni\beta \succ (\alpha|_X)^{[n]}}f_{F,\delta,s_i,\beta}(X)}{n_i}.
\end{align*}
\noindent
We thus have three successive operations $\min_{\mathcal{U}(X^{[n]})}$, $\varlimsup_{i\to\infty}$, $\lim_{(F, \delta)\to (\infty,0)}$ on countable collections of functions (here we use that $\mathcal{U}$ is countable). The operations of (upper) limits and minimum over countable collections of  Borel functions result with Borel functions (this is proven similarly to \cite[Theorem 11.6]{K95}). It follows that $g_\alpha$ is a Borel function, and  $
    \{X\in \S(A):\mdim_{\Sigma}(\alpha|_X)>0\}
    $ a Borel subset of $\S(A)$.

\end{proof}

\begin{thm}
\label{thm:dimensionmapBorel}
        The function $f\colon \S(A)\rightarrow \mathbb{R}\cup \{\pm\infty\}$ defined by $f(X)=\mdim_{\Sigma}(X,G)$ is Borel. 
\end{thm}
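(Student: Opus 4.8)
The plan is to exhibit $f$ as a countable supremum of the Borel functions supplied by Lemma \ref{cor:coverborel}. Recall from Definition \ref{def:U} that $\mathcal{U}$ is a \emph{countable} family of finite open covers of $A^G$, and that by definition $\mdim_{\Sigma}(X,G)=\sup_\gamma\mdim_{\Sigma}(X,\gamma)$ with $\gamma$ ranging over all finite open covers of $X$. The heart of the argument is the identity
$$
f(X)=\mdim_{\Sigma}(X,G)=\sup_{\alpha\in\mathcal{U}}\mdim_{\Sigma}(\alpha|_X)=\sup_{\alpha\in\mathcal{U}}g_\alpha(X),
$$
which replaces the uncountable supremum over all covers of $X$ by a countable one.

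For the inequality ``$\ge$'' in the middle identity I would argue that for each $\alpha\in\mathcal{U}$ the restriction $\alpha|_X$ is a finite open cover of $X$, so by Remark \ref{rem:notation} the value $g_\alpha(X)=\mdim_{\Sigma}(\alpha|_X)$ is the mean dimension of an honest cover of the system $(X,G)$ and is therefore $\le\mdim_{\Sigma}(X,G)$ by definition. For the reverse inequality it suffices, given an arbitrary finite open cover $\gamma=\{B_1,\dots,B_k\}$ of $X$, to produce $\alpha\in\mathcal{U}$ with $\alpha|_X\succ\gamma$; indeed Lemma \ref{lem:basic}(1) then gives $g_\alpha(X)=\mdim_{\Sigma}(\alpha|_X)\ge\mdim_{\Sigma}(X,\gamma)$. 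Writing $B_i=O_i\cap X$ with $O_i\subset A^G$ open and using that $X\in\S(A)$ is closed, so that $A^G\setminus X$ is open, the family $\{O_1,\dots,O_k,A^G\setminus X\}$ is a finite open cover of $A^G$. Applying Lemma \ref{lem:U_construction} I would obtain $\alpha=\{V_1,\dots,V_k,V_{k+1}\}\in\mathcal{U}$ with $V_i\subset O_i$ and $V_{k+1}\subset A^G\setminus X$; restricting to $X$ annihilates the last member ($V_{k+1}\cap X=\emptyset$) and yields $V_i\cap X\subset B_i$, so that $\alpha|_X\succ\gamma$, as required.

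Granting the identity, the theorem is immediate: each $g_\alpha$ is Borel by Lemma \ref{cor:coverborel}, the index set $\mathcal{U}$ is countable, and for every $t\in\mathbb{R}$ one has $\{X:\sup_{\alpha\in\mathcal{U}}g_\alpha(X)>t\}=\bigcup_{\alpha\in\mathcal{U}}\{X:g_\alpha(X)>t\}$, which is Borel; hence the supremum $f$ is a Borel function with values in $\mathbb{R}\cup\{\pm\infty\}$ (the value $+\infty$ may now occur even though each $g_\alpha$ avoids it).

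The step I expect to be the main obstacle is the reduction to the countable family $\mathcal{U}$: one must verify both that restrictions of $\mathcal{U}$-covers compute the mean dimension of covers of $X$ correctly (so that $g_\alpha(X)$ is genuinely the intrinsic mean dimension of the cover $\alpha|_X$ of $(X,G)$, as established in the proof of Lemma \ref{cor:coverborel}) and that every intrinsic finite open cover of $X$ is refined by the restriction of some $\mathcal{U}$-cover. The decisive maneuver for the latter is to extend $\gamma$ across the open complement $A^G\setminus X$ (where the closedness of $X$ is essential) before invoking Lemma \ref{lem:U_construction}.
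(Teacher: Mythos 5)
Your proposal is correct and follows exactly the paper's route: the paper's proof consists precisely of the identity $\mdim_\Sigma(X,G)=\sup_{\alpha\in\mathcal{U}}g_\alpha(X)$ together with countability of $\mathcal{U}$ and Lemma \ref{cor:coverborel}, which is your argument. Your justification of the nontrivial inequality --- extending a cover $\gamma$ of the closed set $X$ by the open complement $A^G\setminus X$ and invoking Lemma \ref{lem:U_construction} so that the restricted $\mathcal{U}$-cover refines $\gamma$, then applying Lemma \ref{lem:basic}(1) --- is a correct filling-in of a step the paper leaves implicit.
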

\begin{proof}
  Note that $$\mdim_\Sigma (X,G) = \sup_{\alpha\in \mathcal{U}} g_\alpha(X).$$   
Since $\mathcal{U}$ is countable, the result follows from Lemma \ref{cor:coverborel}. 
\end{proof}
Define 
\[
\S_{+}(A)=
\{
X\in \S(A): \mdim_{\Sigma}(X,\shift)>0 \}.
\]
\begin{corollary}
\label{cor:Sborel}
The set $\S_{+}(A)$ is a Borel subset of $\S(A)$. 
\end{corollary}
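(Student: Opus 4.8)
The plan is to derive the corollary directly from Theorem \ref{thm:dimensionmapBorel}. The statement asserts that $\S_+(A) = \{X \in \S(A): \mdim_\Sigma(X,\shift) > 0\}$ is Borel in $\S(A)$. Since Theorem \ref{thm:dimensionmapBorel} already establishes that the map $f\colon \S(A) \to \mathbb{R} \cup \{\pm\infty\}$ given by $f(X) = \mdim_\Sigma(X,G)$ is Borel, the set $\S_+(A)$ is simply the preimage of an interval under a Borel map, and hence Borel.

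First I would recall that a function into $\mathbb{R} \cup \{\pm\infty\}$ is Borel precisely when the preimage of every Borel subset of the extended reals is a Borel subset of the domain. The set $(0, +\infty]$ is an open (hence Borel) subset of $\mathbb{R} \cup \{\pm\infty\}$. Therefore its preimage under $f$ is Borel. The key step is to observe the identification
\begin{equation*}
\S_+(A) = \{X \in \S(A): f(X) > 0\} = f^{-1}\big((0,+\infty]\big),
\end{equation*}
which holds by the definition of $\S_+(A)$ and of $f$.

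I would then conclude that $\S_+(A) = f^{-1}((0,+\infty])$ is a Borel subset of $\S(A)$, as the preimage of a Borel set under the Borel function $f$ established in Theorem \ref{thm:dimensionmapBorel}. This completes the argument.

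There is essentially no obstacle here: all the substantive work has already been carried out in Lemma \ref{cor:coverborel} and Theorem \ref{thm:dimensionmapBorel}, where the Borel measurability of the mean dimension map was proved by expressing $\mdim_\Sigma(X,G)$ as a countable supremum of the Borel functions $g_\alpha$, each of which is in turn a countable combination (minimum, upper limit, limit) of the lower semi-continuous functions $f_{F,\delta,s,\beta}$. The only point worth stating carefully is that $(0,+\infty]$ is genuinely Borel in $\mathbb{R}\cup\{\pm\infty\}$ (with the order topology on the extended reals), so that the preimage is legitimately Borel; this is immediate.
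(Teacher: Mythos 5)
Your proof is correct and matches the paper's (implicit) argument exactly: the paper states the corollary without proof, the intended reasoning being precisely that $\S_{+}(A)=f^{-1}\big((0,+\infty]\big)$ is the preimage of a Borel subset of the extended reals under the Borel map $f$ of Theorem \ref{thm:dimensionmapBorel}. Your only addition is to spell out that $(0,+\infty]$ is Borel in $\mathbb{R}\cup\{\pm\infty\}$, which is a harmless and correct elaboration.
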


\section{Descriptive complexity of UPMD and CPMD}\label{sec:descriptive_complexity}
In this section we study the descriptive complexity of the following sets, 
\[
\S_{c+}(A)=
\{
X\in \S(A): (X,\shift)\textup{ has sofic CPMD}
\}
\]
\[
\S_{u+}(A)=
\{
X\in \S(A): (X,\shift)\textup{ has sofic UPMD}
\}.
\]
Recall Remark \ref{rem:sofic}, Definition \ref{def:standard} of standard open covers and Definition \ref{def:U} of $\mathcal{U}$.
Let $Y\in \S (A)$. Define
$$
\mathcal{U}_{St}(Y)=\{\alpha\in \mathcal{U}: \alpha=\{U,V\},\text{ } Y\cap \overline{U}\neq Y \text{ and }  Y\cap \overline{V}\neq Y\}.
$$
Note that $\mathcal{U}_{St}(A^G)$ is exactly the collection of standard open covers of $A^G$ belonging to $\mathcal{U}$, however $\mathcal{U}_{St}(Y)$ for $Y\subsetneq A^G$ may be strictly contained in the collection of open covers belonging to $\mathcal{U}$ which are  standard with respect to $Y$ when restricted to $Y$. \\
\noindent
We denote $\mathcal{U}_{St}(A^G)$
simply by $\mathcal{U}_{St}$.
\begin{lemma}
\label{lem:standard}
    Let {$X\in \S (A)$} and $\alpha$ a standard open cover of $X$. Then there exists $\beta\in \mathcal{U}_{St}(X)$, such that $\beta|_{X}=\alpha$.
    
\end{lemma}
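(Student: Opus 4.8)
The plan is to realize the abstract standard cover $\alpha=\{U',V'\}$ of $X$ as the trace $\beta|_X$ of a two–element cover $\beta=\{U,V\}$ of $A^G$ whose members are unions of the fixed balls $\alpha_n$ of Definition \ref{def:U}, arranged so that $\beta$ is standard \emph{with respect to $X$} in the strong (closure in $A^G$) sense of $\mathcal{U}_{St}(X)$. Concretely, I would first read off from the non‑density of $U'$ and $V'$ two witnesses that will certify the two conditions $X\cap\overline{U}\neq X$ and $X\cap\overline{V}\neq X$, and then build $U$ and $V$ as the unions of all $\alpha_n$–balls that avoid tiny balls around these witnesses.

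First I would extract the witnesses. Since $U'$ is not dense in $X$, choose $p\in X\setminus\overline{U'}$; as $\{U',V'\}$ covers $X$ this forces $p\in V'$, and since $p\in X$ and $p\notin\overline{U'}$ (closure in $A^G$) there is $\rho>0$ with $B_\rho(p)\cap U'=\emptyset$, hence $B_\rho(p)\cap X\subseteq C:=X\setminus U'$. Symmetrically I obtain $q\in X\setminus\overline{V'}$ with $q\in U'$ and $\rho'>0$ such that $B_{\rho'}(q)\cap X\subseteq D:=X\setminus V'$. The sets $C,D$ are disjoint and closed in $A^G$ (being closed in the compact $X$), with $p\in C$, $q\in D$; in particular $p\neq q$ and $\mathrm{dist}(p,q)>0$.

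Next I would discretize. Fix $n$ large enough that $1/n$ is small compared with $\rho$, $\rho'$ and $\mathrm{dist}(p,q)$, and set $U=\bigcup\{B\in\alpha_n:\ B\cap\overline{B_{\rho/3}(p)}=\emptyset\}$ and $V=\bigcup\{B\in\alpha_n:\ B\cap\overline{B_{\rho'/3}(q)}=\emptyset\}$, so $\beta=\{U,V\}\in\mathcal{U}_n$. A case analysis on the distance of a point $z\in A^G$ to $p$ shows that either the $\alpha_n$–ball through $z$ avoids $\overline{B_{\rho/3}(p)}$, giving $z\in U$, or $z$ is so close to $p$ that it is far from $q$ and its ball avoids $\overline{B_{\rho'/3}(q)}$, giving $z\in V$; hence $\{U,V\}$ covers $A^G$. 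By construction $U\cap\overline{B_{\rho/3}(p)}=\emptyset$, so $p\notin\overline U$ and thus $X\cap\overline U\neq X$, and symmetrically $q\notin\overline V$, so $\beta\in\mathcal{U}_{St}(X)$. Finally, any $x\in U'$ satisfies $\mathrm{dist}(x,p)\geq\rho$, so every $\alpha_n$–ball containing $x$ avoids $\overline{B_{\rho/3}(p)}$ and lies in $U$; therefore $U\cap X\supseteq U'$, and symmetrically $V\cap X\supseteq V'$.

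The hard part will be upgrading these containments to the on‑the‑nose trace equality $U\cap X=U'$ and $V\cap X=V'$ demanded by $\beta|_X=\alpha$. The difficulty is genuinely boundary–theoretic: each point of $U'$ has positive distance to $C$, but this distance is not bounded below, so at a single resolution $1/n$ one cannot cover all of $U'$ by $\alpha_n$–balls that simultaneously miss $C$ (a ball straddling $\partial_X U'$ would drag in points of $C$). I would handle this either by the complementary route of applying Lemma \ref{lem:U_construction} to an open extension $\{\tilde U,\tilde V\}$ of $\alpha$ to $A^G$ chosen to avoid $B_{\rho/2}(p)$ and $B_{\rho'/2}(q)$ — which yields the reverse comparison $U\cap X\subseteq U'$, $V\cap X\subseteq V'$ — and then reconciling the two one‑sided estimates near $\partial_X U'$; or, for the downstream application to sofic UPMD, by observing that the containment already established gives $\alpha\succ\beta|_X$, so that Lemma \ref{lem:basic}(1) yields $\mdim_\Sigma(\alpha)\geq\mdim_\Sigma(\beta|_X)$, which is exactly the inequality needed to reduce the supremum over all standard covers to the countable family $\mathcal{U}_{St}(X)$.
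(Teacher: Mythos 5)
You have correctly isolated the sticking point, but the gap is fatal for the lemma as stated, and your fix (a) cannot succeed: working with unions of the fixed balls $\alpha_n$ at any single resolution leaves a one-sided error along the relative boundary of $U'$ in $X$ that no reconciliation of two one-sided estimates removes. More decisively, exact trace equality is simply unattainable inside $\mathcal{U}$: the family $\mathcal{U}$ is countable, so $\{\beta|_X:\beta\in\mathcal{U}\}$ is a countable family of covers of $X$, while for $X=[0,1]^G$ there are uncountably many standard open covers of $X$ (vary the radius of a deleted closed ball). So no ball-union construction can realize every $\alpha$ as a trace. The idea you are missing is the paper's, which abandons discretization entirely: since the members of $\alpha$ are relatively open, write $\alpha=\{W\cap X,\,V\cap X\}$ for open $W,V\subseteq A^G$, pick a pair $(x,y)$ distinguished by $\alpha$, and set $W'=W\setminus\overline{B}_\epsilon(y)$ and $V'=V\setminus\overline{B}_\epsilon(x)$ with $2\epsilon=\min\{d(x,\overline{V\cap X}),\,d(y,\overline{W\cap X})\}$; these deletions do not change the traces, and now $y\notin\overline{W'}$ and $x\notin\overline{V'}$. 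The pair $\{W',V'\}$ covers $X$ but perhaps not $A^G$; by normality choose open $U$ with $X\subset U\subset\overline{U}\subset W'\cup V'$ and put $\beta=\{W',\,V'\cup(X^c\setminus\overline{U})\}$. The added set is disjoint from $X$, so $\beta|_X=\alpha$ on the nose, $\beta$ covers $A^G$, and since $\overline{X^c\setminus\overline{U}}\subset X^c\setminus U$ the point $x$ stays outside the closure of the second member, giving both conditions $X\cap\overline{\,\cdot\,}\neq X$.

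That said, your fallback (b) is sound and is in fact the right repair for the downstream use. Your construction does yield $\beta=\{U,V\}\in\mathcal{U}_{St}(X)$ with $U'\subseteq U\cap X$ and $V'\subseteq V\cap X$, i.e.\ $\alpha\succ\beta|_X$, whence $\mdim_\Sigma(\beta|_X)\leq\mdim_\Sigma(\alpha)$ by Lemma \ref{lem:basic}(1); this coarsening inequality is the only direction the Borel-ness theorem for $\S_{u+}(A)$ needs. Note that your other suggested route, shrinking an open extension via Lemma \ref{lem:U_construction}, goes the wrong way: it produces a refinement of the trace and hence the useless inequality $\mdim_\Sigma(\gamma|_X)\geq\mdim_\Sigma(\alpha)$. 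Also observe that the paper's $\beta$ above is a general open cover whose membership in the countable family $\mathcal{U}$ is not verified (and, by the cardinality count, cannot hold for all $\alpha$), so your $\mathcal{U}$-valued coarsening statement is the natural substitute that makes the application go through; but as a proof of the lemma as literally stated, your attempt is incomplete.
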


\begin{proof}
Let $(x,y)\in X\times X$ a pair distinguished by $\alpha$. There exist open subsets $V,W\subset A^G$ so that $\alpha=\{W\cap X, V\cap X\}$, $x\in W$ and $y\in V$.
Let 
$$0<2\epsilon=\min \{d(x,\overline{V\cap X}),d(y,\overline{W\cap X})\}.  $$
\noindent
Define $W'=W\setminus \overline{B}_\epsilon(y)$ and $V'=V\setminus \overline{B}_\epsilon(x)$ (balls in $A^G$). Since $W'\cap X=W\cap X$ and $V'\cap X=V\cap X$, it holds that
$\{W',V'\}$ is a cover of $X$.
\noindent
 Every metrizable space is normal, that is, every two disjoint closed sets have disjoint open neighborhoods \cite[Example 15.3]{willard2012general}. This implies that there exists an open subset $U$ of $A^G$ so that

\begin{align}
\label{formula}
X\subset U\subset \overline{U}\subset W'\cup V'.
\end{align}
\noindent
Define 
$$\beta=\{W', V'\cup (X^c\setminus \overline{U}\}.$$ By \eqref{formula}, $\beta$ is a cover of $A^G$ and $\beta|_{X}=\{W'\cap X,V'\cap X\}=\alpha$.
Note $y\notin \overline{W'}$ and $x\notin \overline{V'}$. By \eqref{formula}, $x\notin X^c\setminus U=U^c$. As $\overline{X^c\setminus \overline{U}}\subset X^c\setminus U$, it holds  $x\notin \overline{V'\cup (X^c\setminus \overline{U})}$. Conclude that $\beta\in\mathcal{U}_{St}(X)$.

\end{proof}

\begin{lemma}
\label{lem:open}
    Let $\alpha\in \mathcal{U}_{St}$. Then $\{X\in \S(A):\,\alpha\in \mathcal{U}_{St}(X) \}$ is open. 
\end{lemma}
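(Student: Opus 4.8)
We must show that for a fixed $\alpha = \{U,V\} \in \mathcal{U}_{St}$, the set
$$\{X\in \S(A):\,\alpha\in \mathcal{U}_{St}(X) \}$$
is open in $\S(A)$. By definition $\alpha \in \mathcal{U}_{St}(X)$ means precisely that $\alpha = \{U,V\}$ is a cover with $X\cap \overline{U}\neq X$ and $X\cap \overline{V}\neq X$. Let me plan the proof.

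**The approach.** The condition $X\cap \overline{U}\neq X$ simply says that $X$ is not contained in $\overline{U}$, i.e. there exists a point of $X$ lying in the open set $\overline{U}^c$; symmetrically $X\cap\overline{V}\neq X$ says $X\not\subset\overline V$, i.e. $X\cap \overline{U}^c \neq \emptyset$ and $X\cap \overline{V}^c \neq \emptyset$. So the plan is to recognize the defining condition as

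The plan is to recognize the defining condition as the requirement that $X$ meets both of the fixed open sets $\overline{U}^c$ and $\overline{V}^c$, and then to invoke the standard fact that ``hitting a fixed open set'' is an open condition in the Hausdorff topology on $K(A^G)$. Concretely, I would first fix $X_0\in \S(A)$ with $\alpha\in \mathcal{U}_{St}(X_0)$ and pick witness points $p\in X_0\cap \overline{U}^c$ and $q\in X_0\cap \overline{V}^c$. Since $\overline{U}^c$ and $\overline{V}^c$ are open in the compact metric space $A^G$, choose $\eta>0$ so that $B_\eta(p)\subset \overline{U}^c$ and $B_\eta(q)\subset \overline{V}^c$. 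Then I would claim that every $X'\in\S(A)$ with $d_H(X_0,X')<\eta$ still satisfies $\alpha\in\mathcal{U}_{St}(X')$.

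**The key step.** The heart of the argument is the Hausdorff-metric estimate. If $d_H(X_0,X')<\eta$, then by the definition of the Hausdorff metric (recall $C_\delta=\{x:d(x,C)<\delta\}$) we have $X_0\subset (X')_\eta$, so the witness point $p\in X_0$ is within distance $\eta$ of some point $p'\in X'$. Thus $p'\in B_\eta(p)\subset \overline{U}^c$, which gives $p'\in X'\cap \overline{U}^c$ and hence $X'\cap\overline{U}\neq X'$. The identical argument applied to $q$ produces $q'\in X'\cap\overline{V}^c$, so $X'\cap\overline{V}\neq X'$. Since $\alpha$ is a fixed cover of $A^G$, it remains a cover of every $X'$, and therefore $\alpha\in\mathcal{U}_{St}(X')$. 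This shows the open ball of radius $\eta$ around $X_0$ in $\S(A)$ is contained in the set, proving openness.

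**Main obstacle.** I do not expect a serious obstacle here; the result is essentially the observation that ``$X$ meets a fixed open set'' is an open condition in the Hausdorff topology, combined with the fact that $\alpha$ being a cover of the ambient space $A^G$ is automatic and metric-independent. The only point requiring mild care is the direction of the Hausdorff inclusion one uses: one needs $X_0\subset (X')_\eta$ (rather than the reverse) to transport the witness points \emph{into} $X'$, and this is exactly the inclusion guaranteed by $d_H(X_0,X')<\eta$. A secondary subtlety worth flagging is that $\alpha\in\mathcal{U}_{St}(X)$ requires the non-density condition to hold \emph{relative to $X$} via $\overline{U}$, $\overline{V}$ taken in $A^G$; since these closures are computed in the ambient space and not in $X$, the witness-point description is clean and no relative-closure complications arise.
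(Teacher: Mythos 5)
Your proof is correct and takes essentially the same approach as the paper: both arguments pick witness points of $X$ in the open sets $\overline{U}^c$ and $\overline{V}^c$, shrink to $\epsilon$-balls contained in those complements, and use $d_H(X,X')<\epsilon$ to transport the witnesses into any nearby $X'\in\S(A)$, the cover condition being automatic since $\alpha$ covers $A^G$. The only cosmetic difference is notational: the paper writes the witness condition as $B_\epsilon(x)\cap\overline{V}=\emptyset$ where you write $B_\eta(p)\subset\overline{U}^c$, which is the same thing.
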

\begin{proof}
 Let $\alpha=\{U,V\}\in \mathcal{U}_{St}$ and $X\in \S(A)$ so that $\alpha\in \mathcal{U}_{St}(X)$. By definition $X\cap \overline{U}\neq X$ and $X\cap \overline{V}\neq X$. Thus, one may choose $\epsilon>0$ and $x,y\in X$ such that $B_{\epsilon}(x)\cap \overline{V}=\emptyset$ and $B_{\epsilon}(y)\cap \overline{U}=\emptyset$ (balls in $A^G$). Hence, if $X'\in \S(A)$ such that $d_H(X,X')<\epsilon$, then $X'\cap \overline{U}\neq X'$ and $X'\cap \overline{V}\neq X'$. Since the topology in $\S(A)$ is generated by the Hausdorff metric, this concludes the proof.
   
\end{proof}

\begin{thm}
 The set $\S_{u+}(A)$ is a Borel subset of $\S(A)$. 
\end{thm}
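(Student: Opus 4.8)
The plan is to collapse the uncountable quantifier ``for every standard open cover of $X$'' hidden in the definition of sofic UPMD into a countable quantifier over the fixed countable family $\mathcal{U}_{St}=\mathcal{U}_{St}(A^G)$, and then to exhibit the complement of $\S_{u+}(A)$ as a countable union of Borel sets. First I would isolate the key reformulation: an action $(X,\shift)$ has sofic UPMD if and only if $\mdim_{\Sigma}(\beta|_X)>0$ for every $\beta\in\mathcal{U}_{St}(X)$. The nontrivial implication uses Lemma \ref{lem:standard}: given an arbitrary standard open cover $\alpha$ of $X$, it produces some $\beta\in\mathcal{U}_{St}(X)$ with $\beta|_X=\alpha$, so that positivity of $\mdim_{\Sigma}$ on all the $\beta|_X$ forces positivity on every standard cover of $X$. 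The reverse implication is immediate, since for $\beta=\{U,V\}\in\mathcal{U}_{St}(X)$ the conditions $X\cap\overline{U}\neq X$ and $X\cap\overline{V}\neq X$ make $\beta|_X$ a standard cover of $X$.

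With this reformulation in hand, I would write the complement as
$$
\S(A)\setminus\S_{u+}(A)=\bigcup_{\beta\in\mathcal{U}_{St}}\{X\in\S(A):\beta\in\mathcal{U}_{St}(X)\text{ and }\mdim_{\Sigma}(\beta|_X)\leq 0\}.
$$
Indeed, $X$ fails sofic UPMD precisely when some $\beta\in\mathcal{U}_{St}(X)$ satisfies $\mdim_{\Sigma}(\beta|_X)\leq 0$; here one also checks that $\mathcal{U}_{St}(X)\subseteq\mathcal{U}_{St}$ (if $X\not\subseteq\overline{U}$ then a fortiori $\overline{U}\neq A^G$), so that $\mathcal{U}_{St}$ is a legitimate countable index set.

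For each fixed $\beta\in\mathcal{U}_{St}$, the set $\{X:\beta\in\mathcal{U}_{St}(X)\}$ is open by Lemma \ref{lem:open}, while $\{X:\mdim_{\Sigma}(\beta|_X)\leq 0\}=g_\beta^{-1}\big((-\infty,0]\big)$ is Borel because $g_\beta$ is a Borel function by Lemma \ref{cor:coverborel}. Their intersection is therefore Borel, and since $\mathcal{U}\supseteq\mathcal{U}_{St}$ is countable, the displayed union is a countable union of Borel sets, hence Borel; taking complements gives that $\S_{u+}(A)$ is Borel.

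The only delicate point, and the one I would verify most carefully, is the reformulation step. Precisely because of the phenomenon noted in the remark preceding Lemma \ref{lem:standard} --- that $\mathcal{U}_{St}(X)$ need not contain a representative of every cover that is standard as a cover of $A^G$ --- one must use Lemma \ref{lem:standard} in the correct direction, namely that $\mathcal{U}_{St}(X)$ \emph{does} contain a representative of every cover that is standard \emph{for $X$}, which is exactly what the UPMD condition ranges over. Once membership in $\mathcal{U}_{St}(X)$ is seen to be an open condition and positivity of mean dimension a Borel condition, the remainder is just the standard closure of the Borel $\sigma$-algebra under countable unions.
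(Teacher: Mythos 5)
Your proof is correct and is essentially the paper's own argument: the same reduction via Lemma \ref{lem:standard} to the countable family $\mathcal{U}_{St}$, the same expression of $\S_{u+}(A)^c$ as a countable union of sets of the form $\{X\in\S(A):\alpha\in\mathcal{U}_{St}(X)\}\cap g_\alpha^{-1}\big((-\infty,0]\big)$, and the same appeals to Lemma \ref{lem:open} and Lemma \ref{cor:coverborel}. If anything, your version is marginally more careful than the paper's: you write the condition as $\mdim_{\Sigma}(\alpha|_X)\leq 0$ (consistent with the claimed equivalence, whereas the paper's displayed union uses $=0$) and you explicitly verify that $\mathcal{U}_{St}(X)\subseteq\mathcal{U}_{St}$, which justifies the countable index set.
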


\begin{proof}
It is enough to show $\S_{u+}(A)^c$ is Borel. We claim $X\in \S_{u+}(A)^c$ if and only if there exists $\gamma\in \mathcal{U}_{St}(X)$ such that $\mdim_\Sigma (X,\gamma|_X) \leq 0$. Indeed, if $X\in \S_{u+}(A)^c$, then by definition, there exists a standard open cover $\beta$ of $X$, such that $\mdim_\Sigma (X,\beta) \leq 0$. 
 By Lemma \ref{lem:standard}, there exists $\gamma\in \mathcal{U}_{St}(X)$, such that $\gamma|_X=\beta$. Thus $$\mdim_{\Sigma}(\gamma|_X)=\mdim_\Sigma (X,\beta) \leq 0.$$ Conversely, if $\gamma\in \mathcal{U}_{St}(X)$ such that $\mdim_{\Sigma}(\gamma|_X)\leq 0$, then as $\gamma|_X$ is a standard open cover of $X$ it implies $X\in \S_{u+}(A)^c$.\\
Thus
\begin{align*}
\S_{u+}(A)^c &= \bigcup_{\alpha\in \mathcal{U}_{St}}\big(\{X\in \S(A):\mdim_{\Sigma}(\alpha|_X)=0\} \cap \{X\in \S(A) :\,\alpha\in \mathcal{U}_{St}(X)\}\big).
\end{align*}
\noindent
Finally, we use Lemma \ref{cor:coverborel}, Lemma \ref{lem:open}, and the fact that $\mathcal{U}_{St}$ is countable to complete the proof. 

\end{proof}

We say that a subset of a Polish space is \textbf{analytic} if it is the continuous image of a Borel subset of a Polish space and \textbf{coanalytic} (or, equivalently, $\Pi_1^1$) if it is the complement of an analytic set.
All Borel subsets of a Polish space are both analytic and coanalytic. Moreover, if a set is both analytic and coanalytic, then it is Borel \cite[Corollary 26.2]{K95}. However, in every uncountable Polish space there are analytic, and hence coanalytic, sets which are not Borel. Loosely speaking, if a set is analytic or coanalytic but not Borel, it means that it cannot be described with countable information.
 
We want to show that $\SC(A)$ is coanalytic. For this we have to define a model for factor systems. 

Let $Y$ and $Y'$ be compact metrizable spaces. We set $$C(Y,Y')=\{f\colon Y\rightarrow Y': f\text{ is continuous}\}.$$ We equip $C(Y,Y')$ with the topology generated by the sup metric. 

Let $\H=[0,1]^\N$ be the Hilbert cube. We will work with the following Polish space $$\mathcal{P}=K(A^G)\times K(\H^G)\times C(A^G,\H^G)$$ equipped with the product topology, where $K(A^G),K(\H^G)$ are equipped with the Hausdorff metric. 

\begin{rem}
\label{rem:hilbert}
    The advantage of working with the Hilbert cube is that it is universal, that is, every compact metrizable space embeds into the Hilbert cube \cite[Example 22.4]{willard2012general}. A consequence of this is that every TDS is conjugate to an element in $\mathcal{S}(\H)$.   
\end{rem} 

Define the factor systems as follows
\[
\begin{array}
[c]{cc}
\mathcal{F}=\{(X,Y,f)\in \mathcal{P}: X\in \S(A),Y\in\S(\H)\text{, } 
\\ 
f\colon A^G \rightarrow \H^G \text{ is continuous and G-equivariant on $X$, and }f(X)=Y \}.
\end{array}
\]

The following result is standard (e.g. see \cite[Page 140]{taylor1985general}). 

\begin{lemma}
[Moore-Osgood]
\label{lem:MO}

    Let $(Z,\tau)$ be a metric space and $(Y,\rho)$ a complete metric space. Let $x,x_1,x_2,\ldots \in Z$ with $x=\lim_{i\rightarrow\infty} x_i$ (that is $\lim_{i\rightarrow\infty} \tau (x,x_i)=0$). 
    If a sequence of functions $f_n\colon Z\to Y$ converges uniformly to $f\colon Z\to Y$, that is for every $\epsilon>0$, there exists $N\in \N$, so that for all $n\geq N$ $\rho (f_n(z),f(z))<\epsilon$ for all $z\in X$ and $f_n(x)=\lim_{i\rightarrow\infty}f_n(x_i)$ for all $n\in \N$, then 
    \[
\lim_{i\rightarrow\infty}f(x_i)=\lim_{n\to \infty}f_n(x).
    \]
\end{lemma}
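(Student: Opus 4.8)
The plan is to recognize this as the classical double-limit interchange (``Moore--Osgood'') situation and to dispatch it with a routine $\epsilon/3$ argument, reducing the two-sided equality to a single continuity statement. First I would observe that since $f_n \to f$ uniformly on $Z$, in particular $f_n(x) \to f(x)$ pointwise at the point $x$, so the right-hand side already satisfies $\lim_{n\to\infty} f_n(x) = f(x)$. Thus it suffices to prove that $\lim_{i\to\infty} f(x_i) = f(x)$; establishing this simultaneously shows that the left-hand limit exists and computes its value, so no separate existence argument is needed.

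To prove $f(x_i) \to f(x)$, fix $\epsilon > 0$. By uniform convergence choose $N$ so that $\rho(f_N(z), f(z)) < \epsilon/3$ for every $z \in Z$; in particular this estimate holds at each $x_i$ and at $x$. By the hypothesis $f_N(x) = \lim_{i\to\infty} f_N(x_i)$, choose $I$ so that $\rho(f_N(x_i), f_N(x)) < \epsilon/3$ for all $i \geq I$. Then for every $i \geq I$ the triangle inequality gives
$$\rho(f(x_i), f(x)) \leq \rho(f(x_i), f_N(x_i)) + \rho(f_N(x_i), f_N(x)) + \rho(f_N(x), f(x)) < \epsilon.$$
Since $\epsilon > 0$ was arbitrary, $\lim_{i\to\infty} f(x_i) = f(x)$, and combining this with the first paragraph yields $\lim_{i\to\infty} f(x_i) = \lim_{n\to\infty} f_n(x)$, as desired.

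There is essentially no serious obstacle here; the only points needing a little care are bookkeeping. I would emphasize that the existence of $\lim_{i\to\infty} f(x_i)$ is not assumed but falls out of the very same $\epsilon/3$ estimate, and that the completeness of $(Y,\rho)$ --- although part of the standard hypothesis set for Moore--Osgood, where it is used to manufacture a limit function from a uniform Cauchy condition --- is not actually invoked in this formulation, since both the limit function $f$ and the fiberwise limits $f_n(x)$ are handed to us directly. The genuinely dynamical applications of this lemma occur later in the analysis of the factor model space $\mathcal{F}$, so at this stage I would keep the proof to this short, self-contained estimate.
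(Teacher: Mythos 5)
Your proof is correct and is precisely the standard $\epsilon/3$ interchange argument; the paper itself gives no proof of this lemma, citing it as standard, so your argument is exactly the classical one that the cited reference supplies. Your side remark is also accurate: since $f$ is handed to you as the uniform limit and the fiberwise limits $f_n(x)=\lim_{i\to\infty}f_n(x_i)$ are hypotheses, the completeness of $(Y,\rho)$ is never actually invoked in this formulation.
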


\begin{lemma}
   The set $\mathcal{F}$ is a closed subset of $\mathcal{P}$.
\end{lemma}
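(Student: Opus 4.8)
The plan is to show that $\mathcal{F}$ is closed by taking a convergent sequence $(X_k, Y_k, f_k) \to (X, Y, f)$ in $\mathcal{P}$ with each $(X_k, Y_k, f_k) \in \mathcal{F}$, and verifying that the limit $(X, Y, f)$ satisfies all the defining conditions of $\mathcal{F}$. Since $\mathcal{S}(A)$ is closed in $K(A^G)$ (as noted in the text) and likewise $\mathcal{S}(\H)$ is closed in $K(\H^G)$, we immediately get $X \in \mathcal{S}(A)$ and $Y \in \mathcal{S}(\H)$. The map $f$ is continuous because it is a limit in $C(A^G, \H^G)$, which is equipped with the sup metric, so uniform limits of continuous functions are continuous. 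The two remaining conditions to verify are that $f$ is $G$-equivariant on $X$, and that $f(X) = Y$.

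First I would check $G$-equivariance of $f$ on $X$. Fix $g \in G$ and a point $x \in X$. Using that $X = \lim_k X_k$ in the Hausdorff metric, I would choose points $x_k \in X_k$ with $x_k \to x$. The key is to pass to the limit in the identity $f_k(g x_k) = g f_k(x_k)$, which holds because each $f_k$ is equivariant on $X_k$ and $g x_k \in X_k$ by $G$-invariance of $X_k$. Here is where Lemma \ref{lem:MO} (Moore-Osgood) enters: because $f_k \to f$ uniformly and $g x_k \to g x$ (the shift action is continuous, indeed by homeomorphisms), the double-limit interchange gives $\lim_k f_k(g x_k) = f(g x)$; similarly $\lim_k f_k(x_k) = f(x)$, and then applying the continuous homeomorphism $g$ on $\H^G$ yields $\lim_k g f_k(x_k) = g f(x)$. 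Passing to the limit in $f_k(g x_k) = g f_k(x_k)$ then gives $f(g x) = g f(x)$.

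Next I would verify $f(X) = Y$, which splits into the two inclusions $f(X) \subseteq Y$ and $Y \subseteq f(X)$. For $f(X) \subseteq Y$: given $x \in X$, pick $x_k \in X_k$ with $x_k \to x$; then $f_k(x_k) \in Y_k$, and by Moore-Osgood $f_k(x_k) \to f(x)$, while $Y_k \to Y$ in the Hausdorff metric forces the limit $f(x)$ to lie in the closed set $Y$. For $Y \subseteq f(X)$: given $y \in Y$, use $Y = \lim_k Y_k$ to find $y_k \in Y_k = f_k(X_k)$ with $y_k \to y$, and preimages $x_k \in X_k$ with $f_k(x_k) = y_k$. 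By compactness of $K(A^G)$ (and hence of the relevant space of points) I may pass to a subsequence so that $x_k \to x$ for some $x$, which lies in $X$ since $X_k \to X$; then Moore-Osgood gives $f(x) = \lim_k f_k(x_k) = \lim_k y_k = y$, so $y \in f(X)$.

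The main obstacle, and the reason Moore-Osgood is invoked rather than a naive continuity argument, is that one is simultaneously moving the functions $f_k$ and the argument points (e.g. $x_k$ or $g x_k$), so a single application of continuity of a fixed $f$ does not suffice; one genuinely needs the interchange of the limit in $k$ (the function index) with the limit in the point index to conclude $\lim_k f_k(x_k) = f(\lim_k x_k)$. The hypotheses of Lemma \ref{lem:MO} are met precisely because convergence in $C(A^G, \H^G)$ is uniform and $\H^G$ is a complete (indeed compact) metric space. The only care needed is to state the point-approximations $x_k \to x$ correctly via the Hausdorff metric and, in the surjectivity direction, to extract a convergent subsequence using compactness; all of these are routine once the Moore-Osgood mechanism is in place.
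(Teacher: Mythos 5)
Your proof is correct and follows essentially the same route as the paper's: take a convergent sequence in $\mathcal{F}$, use closedness of $\S(A)$ and $\S(\H)$, approximate points via the Hausdorff metric, and apply the Moore--Osgood lemma to pass to the limit in the equivariance identity and in $f_k(x_k)=y_k$; you are in fact slightly more explicit than the paper, which leaves the inclusion $f(X)\subseteq Y$ and the subsequence extraction implicit. One small correction: the compactness you invoke to extract the convergent subsequence $x_{k_j}\to x$ is that of the space $A^G$ itself (compact since $A$ is compact and $G$ countable), not of $K(A^G)$.
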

\begin{proof}
    Let $\{(X_n,Y_n,f_n)\}_{n\in\N}\subset \mathcal{F}$ be a sequence such that $(X_n,Y_n,f_n)\rightarrow (X,Y,f)$ in $\mathcal{P}$. Since $\S(A)$ and $\S(\H)$ are closed subsets, it holds that $X\in \S(A)$ and $Y\in \S(\H)$. Let $x\in X$. There exists $x_n\in X_n$ such that $x_n\rightarrow x$. We  have that $gf_n(x_n)=f_n(gx_n)$ for all $g\in G$ and all $n\in \N$. Using the fact that every compatible metric for a compact metrizable space is complete and Lemma \ref{lem:MO} with $Z=A^G$ and $Y=H^G$ we conclude that $gf(x)=f(gx)$ for all $g\in G$. 
    
    Let $y\in Y$. There exists $x_n\in X_n$ and $x\in X$ so that $x_n\rightarrow x$ and $f_n(x_n)\rightarrow y$. Lemma \ref{lem:MO} implies that $f(x)=y$. Conclude $f(X)=Y$.
    
\end{proof}

Now we define the non-trivial zero mean dimension factor systems as follows
$$
\mathcal{F}_0(A)=\{(X,Y,f)\in \mathcal{F}: \mdim_{\Sigma}(Y)=0\text{ and }|Y|>1\}.
$$

\begin{lemma}
The set $\mathcal{F}_0(A)$ is a Borel subset of $\mathcal{P}$.
\end{lemma}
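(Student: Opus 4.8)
The plan is to exhibit $\mathcal{F}_0(A)$ as a finite intersection of Borel subsets of $\mathcal{P}$. We have already shown that $\mathcal{F}$ is closed in $\mathcal{P}$, hence Borel, so it suffices to prove that the two further conditions carving $\mathcal{F}_0(A)$ out of $\mathcal{F}$ --- namely $\mdim_{\Sigma}(Y)=0$ and $|Y|>1$ --- each define a Borel set. Throughout I will use the continuous coordinate projection $\pi_2\colon \mathcal{P}\to K(\H^G)$ onto the second factor.

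For the mean dimension condition I would invoke Theorem \ref{thm:dimensionmapBorel} with the Hilbert cube $\H$ playing the role of $A$: the entire development of Section \ref{sec:mdim_Borel} uses nothing about $A$ beyond its being a compact metrizable space, so the map $h\colon \S(\H)\to \RR\cup\{\pm\infty\}$, $h(Y)=\mdim_{\Sigma}(Y,G)$, is Borel. Since $\S(\H)$ is closed in $K(\H^G)$ and $\pi_2(\mathcal{F})\subseteq \S(\H)$, the composition $h\circ\pi_2$ is a well-defined Borel function on the closed set $\pi_2^{-1}(\S(\H))\supseteq\mathcal{F}$. Consequently
\[
\{(X,Y,f)\in\mathcal{F}:\mdim_{\Sigma}(Y)=0\}=\mathcal{F}\cap (h\circ\pi_2)^{-1}(\{0\})
\]
is Borel, being the intersection of the Borel set $\mathcal{F}$ with the preimage of the closed singleton $\{0\}$.

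For the condition $|Y|>1$ I would note that for nonempty compact $Y$ this is equivalent to $\diam(Y)>0$, and that $\diam\colon K(\H^G)\to\RR$ is continuous with respect to the Hausdorff metric; hence $\{Y\in K(\H^G):|Y|>1\}=\diam^{-1}((0,\infty))$ is open, so its preimage under $\pi_2$ is open, hence Borel, in $\mathcal{P}$. (Alternatively, the set of singletons is the image of the continuous injection $y\mapsto\{y\}$ of the compact space $\H^G$, hence compact and closed, so its complement is open.) Intersecting the three Borel sets --- the closed set $\mathcal{F}$, the preimage $(h\circ\pi_2)^{-1}(\{0\})$, and the open set $\{|Y|>1\}$ --- shows that $\mathcal{F}_0(A)$ is Borel. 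No step here is a genuine obstacle; the only point needing a word of justification is the legitimacy of applying Theorem \ref{thm:dimensionmapBorel} to $\S(\H)$ instead of $\S(A)$, which is immediate since that theorem is proved for an arbitrary compact metrizable space. The remainder is a routine combination of continuity of the projections with Borel measurability of the mean dimension map.
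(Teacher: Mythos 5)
Your proof is correct and follows essentially the same route as the paper's much terser one: the paper likewise writes $\mathcal{F}_0(A)$ as the intersection of the closed set $\mathcal{F}$ with the open set $\{(X,Y,f)\in\mathcal{P}:|Y|>1\}$ and the Borel set $\{(X,Y,f)\in\mathcal{P}:\mdim_{\Sigma}(Y)=0\}$, the latter resting on the Borel measurability of the mean dimension map with $\H$ in place of $A$ (the paper cites Corollary \ref{cor:Sborel}, whose content is Theorem \ref{thm:dimensionmapBorel} for $\S(\H)$). Your additional justifications --- continuity of $\diam$ on $K(\H^G)$, composing with the projection $\pi_2$, and the explicit remark that Section \ref{sec:mdim_Borel} applies verbatim to any compact metrizable space --- simply make precise the steps the paper leaves implicit.
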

\begin{proof}
   The subset $\{(X,Y,f)\in \mathcal{P}: |Y|>1\}$ is open, and the subset $$\{(X,Y,f)\in \mathcal{P}: \mdim_{\Sigma}(Y)=0\}$$ is Borel (Corollary \ref{cor:Sborel}).
\end{proof}

\begin{thm}
    $\SC(A)$ is a coanalytic subset of $\S(A)$. 
\end{thm}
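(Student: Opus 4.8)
The plan is to show that membership in $\SC(A)$ can be expressed as the complement of a projection of a Borel set, which yields coanalyticity. Recall the correspondence between factors and $G$-invariant closed equivalence relations (Proposition \ref{prop:factor-eq_rel}): an action $(X,\shift)$ fails to have sofic CPMD precisely when it admits a non-trivial factor with non-positive sofic mean dimension. Using the model $\mathcal{F}$ for factor systems and the Borel set $\mathcal{F}_0(A)$ of non-trivial zero mean dimension factor systems, the natural approach is to write
\[
\S_{c+}(A)^c = \{X\in \S(A): \exists\, (Y,f)\text{ with }(X,Y,f)\in \mathcal{F}_0(A)\}.
\]
In other words, $X$ lacks CPMD if and only if there exists a zero mean dimension factor $Y$ of $X$ with $|Y|>1$.

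First I would verify the equivalence underlying this formula. The forward direction is essentially the definition: if $(X,\shift)$ does not have sofic CPMD, there is a non-trivial factor $G\curvearrowright Z$ with $\mdim_\Sigma(Z,G)\leq 0$; non-triviality here must be upgraded from ``$Z$ is a proper factor'' to ``$|Z|>1$'', which is exactly the content of working with $|Y|>1$ in $\mathcal{F}_0(A)$. By Remark \ref{rem:hilbert}, since every compact metrizable space embeds in the Hilbert cube and every t.d.s.\ is conjugate to an element of $\S(\H)$, one can realize this factor as some $Y\in\S(\H)$ via a continuous $G$-equivariant $f\colon A^G\to\H^G$ with $f(X)=Y$, so that $(X,Y,f)\in\mathcal{F}_0(A)$. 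Here one should note that $\mdim_\Sigma$ is a conjugacy invariant (Remark \ref{mdim_conj}), so realizing the factor inside $\H^G$ does not change its mean dimension, and that $\mdim_\Sigma(Y)=0$ rather than merely $\leq 0$ is harmless since a non-trivial factor of a system always has mean dimension $\geq 0$ when the total space does (and when it is $\leq 0$ and the system is non-trivial one can arrange equality, or simply observe that the relevant factors one produces are genuinely zero). The converse direction is immediate: any $(X,Y,f)\in\mathcal{F}_0(A)$ exhibits $Y$ as a non-trivial factor of $X$ with zero mean dimension, so $X$ is not CPMD.

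Granting the equivalence, the topological/descriptive conclusion is formal. The set $\mathcal{F}_0(A)$ is Borel in $\mathcal{P}$ by the preceding lemma, and $\mathcal{P}$ is Polish. Consider the projection $\mathrm{proj}_{\S(A)}\colon \mathcal{P}\to \S(A)$ onto the first coordinate, which is continuous. Then
\[
\S_{c+}(A)^c = \mathrm{proj}_{\S(A)}\big(\mathcal{F}_0(A)\big)
\]
is the continuous image of a Borel subset of a Polish space, hence analytic by definition. Therefore its complement $\SC(A)$ is coanalytic, as claimed.

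The main obstacle I anticipate is the realizability step in the forward direction: one must be careful that an arbitrary non-trivial zero mean dimension factor of $X$ can genuinely be packaged as an element of $\mathcal{F}_0(A)$, i.e.\ that the abstract factor can be presented as a $G$-equivariant continuous map $A^G\to\H^G$ (not merely defined on $X$) carrying $X$ onto a shift-invariant $Y\subset\H^G$ with $|Y|>1$ and $\mdim_\Sigma(Y)=0$. This requires invoking the universality of the Hilbert cube together with the fact that an equivariant embedding of the factor system into $\H^G$ exists, and then extending the resulting map continuously from $X$ to all of $A^G$ (the definition of $\mathcal{F}$ asks $f$ to be defined and continuous on $A^G$ but only $G$-equivariant on $X$, which is what makes such an extension possible via, e.g., the Tietze extension theorem coordinatewise). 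Once this presentation is secured, everything else is bookkeeping with the already-established Borelness of $\mathcal{F}_0(A)$ and the definition of analytic sets.
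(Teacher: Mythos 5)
Your proposal is correct and takes essentially the same route as the paper: it expresses $\S_{c+}(A)^c$ as the first-coordinate projection of the Borel set $\mathcal{F}_0(A)$, realizing an arbitrary non-trivial zero mean dimension factor inside $\S(\H)$ via the universality of the Hilbert cube and extending the factor map from $X$ to all of $A^G$ coordinatewise by Tietze's theorem, whence the projection is analytic and $\SC(A)$ is coanalytic. The one subtlety you flag---upgrading ``$\mdim_{\Sigma}(Y)\leq 0$'' to ``$\mdim_{\Sigma}(Y)=0$'' in order to land in $\mathcal{F}_0(A)$---is treated no more explicitly in the paper, whose proof likewise asserts outright a factor $Y\in \S(\H)$ with $\mdim_{\Sigma}(Y)=0$ and $|Y|>1$.
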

\begin{proof}
Let $X\in \S(A)\setminus \SC(A)$. Using Remark \ref{rem:hilbert}, there exists $Y\in \S(\H)$, with $\mdim_{\Sigma}(Y)=0$ and $|Y|>1$, and a continuous surjective $G$-equivariant function $f' \colon X\rightarrow Y$. Using Tietze's Extension Theorem on each coordinate of $\H$ one can show there exists a continuous function $f\colon A^G\rightarrow \H^G$ such that $f|_X=f'$. This implies that $(X,Y,f)\in \mathcal{F}_0(A)$. 

Thus, $\S(A)\setminus \SC(A)$ is the projection to the first coordinate of $\mathcal{F}_0(A)$ and hence it is an analytic subset. We conclude that $\SC(A)$ is a coanalytic subset of $\S(A)$. 
\end{proof}

If $A$ is a finite set then $\SC(A)$ is empty and hence Borel. Nonetheless, in the next section we will see that if $A$ is the unit interval then $\SC(A)$ is not Borel.

In this section we will construct an extensive family of shift spaces with completely positive sofic mean dimension. More precisely, we construct a function that assigns to each countable compact subset of $[0,1]$ a shift space on $[0,1]$ with sofic CPMD. This assignment enables us to characterize the descriptive complexity of this family.

 A standard way to prove that a coanalytic set is not Borel is to reduce it to a known set which is not Borel. More specifically, if $\mathcal{B}$ is a known non-Borel subset of some Polish space $\mathcal{Y}$, $\mathcal{A} \subseteq  \mathcal{X}$ and $f\colon \mathcal{Y}\rightarrow \mathcal{X}$ is a Borel function such that $f^{-1}( \mathcal{A}) = \mathcal{B}$, then $\mathcal{A}$ is not Borel. In this case, we say that $\mathcal{B}$ is \textbf{Borel reducible} to $\mathcal{A}$. This inspires the following definition. 
\begin{deff}
A coanalytic subset $\mathcal{A}$  of a Polish space $\mathcal{X}$ is \textbf{complete coanalytic} (or $\Pi_1^1$-complete) if for every coanalytic set $\mathcal{B}$ of a Polish space $\mathcal{Y}$, there exists a Borel function $f:\mathcal{Y}\rightarrow \mathcal{X}$ such that $f^{-1}( \mathcal{A}) = \mathcal{B}$. 
\end{deff}
In some sense, the complete coanalytic sets are as complicated as coanalytic sets can be. The following proposition simply follows from the definition and the fact that the composition of Borel functions is Borel. 

\begin{prop}
\label{prop:coanalyticbasic} Let $\mathcal{A}$ be a coanalytic subset of a Polish space $\mathcal{X}$ and $\mathcal{B}$ be a complete coanalytic subset of a Polish space $\mathcal{Y}$. 
If there exists a Borel function $f:\mathcal{Y}\rightarrow \mathcal{X}$ such that  $f^{-1}( \mathcal{A}) = \mathcal{B}$, i.e. $\mathcal{B}$ is Borel reducible to $\mathcal{A}$, then $\mathcal{A}$ is also complete coanalytic. 
\end{prop}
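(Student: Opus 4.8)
The plan is to verify directly that $\mathcal{A}$ satisfies the two requirements in the definition of a complete coanalytic set: first, that $\mathcal{A}$ is itself coanalytic, and second, that every coanalytic subset of a Polish space is Borel reducible to $\mathcal{A}$. The first requirement is handed to us as a hypothesis, so the entire content of the argument lies in establishing the universal reduction property. This is a purely formal manipulation, so I do not expect any genuine obstacle; the only point that requires (minimal) justification is the stability of the class of Borel functions under composition.

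The main step is as follows. Fix an arbitrary coanalytic subset $\mathcal{C}$ of an arbitrary Polish space $\mathcal{Z}$; my goal is to produce a Borel map $h\colon \mathcal{Z}\to \mathcal{X}$ with $h^{-1}(\mathcal{A})=\mathcal{C}$. Since $\mathcal{B}$ is \emph{complete} coanalytic, applying its defining property to $\mathcal{C}$ yields a Borel function $g\colon \mathcal{Z}\to \mathcal{Y}$ such that $g^{-1}(\mathcal{B})=\mathcal{C}$. I would then take $h=f\circ g$, where $f\colon \mathcal{Y}\to\mathcal{X}$ is the given Borel reduction of $\mathcal{B}$ to $\mathcal{A}$ satisfying $f^{-1}(\mathcal{A})=\mathcal{B}$.

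It then remains to check two things about $h=f\circ g$. First, $h$ is Borel: the composition of two Borel functions between Polish (or standard Borel) spaces is Borel, because for any Borel $E\subseteq \mathcal{X}$ the set $f^{-1}(E)$ is Borel in $\mathcal{Y}$ and hence $g^{-1}(f^{-1}(E))$ is Borel in $\mathcal{Z}$. Second, the reduction identity holds:
$$
h^{-1}(\mathcal{A})=(f\circ g)^{-1}(\mathcal{A})=g^{-1}\big(f^{-1}(\mathcal{A})\big)=g^{-1}(\mathcal{B})=\mathcal{C}.
$$
Thus $\mathcal{C}$ is Borel reducible to $\mathcal{A}$, and since $\mathcal{C}$ was an arbitrary coanalytic set in an arbitrary Polish space, this shows $\mathcal{A}$ meets the reduction clause of the definition. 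Combined with the coanalyticity of $\mathcal{A}$ from the hypothesis, this gives that $\mathcal{A}$ is complete coanalytic, completing the argument. As noted, the only non-tautological ingredient is the closure of Borel functions under composition, which is standard (and can be cited, e.g., from \cite{K95}).
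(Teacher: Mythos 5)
Your argument is correct and is precisely the one the paper intends: the paper dispenses with a written proof, remarking only that the proposition ``simply follows from the definition and the fact that the composition of Borel functions is Borel,'' which is exactly your reduction $h=f\circ g$ together with the identity $h^{-1}(\mathcal{A})=g^{-1}(f^{-1}(\mathcal{A}))=g^{-1}(\mathcal{B})=\mathcal{C}$. Your write-up merely makes explicit the routine verification the authors left to the reader.
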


Denote the unit interval by $\I=[0,1]$.
{A classic example of a coanalytic complete set is the family of countable compact subsets of $\I$. For a proof of the following result see \cite[Theorem 27.5]{K95}. 
\begin{thm}
[Hurewicz] 
\label{hurewicz}
 The collection of countable compact subsets of $\I$, is a  complete coanalytic subset of $K(\I)$. \end{thm}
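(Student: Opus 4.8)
The plan is to verify directly the two defining features of a complete coanalytic set. Writing $K_\omega(\I)$ for the family of countable compact subsets of $\I$, I would first show that $K_\omega(\I)$ is coanalytic, and then that it is $\Pi_1^1$-hard. For the latter, by Proposition \ref{prop:coanalyticbasic} it suffices to Borel-reduce a single $\Pi_1^1$-complete set to $K_\omega(\I)$; I would take the classical $\Pi_1^1$-complete set $WF$ of well-founded trees on $\N$ (the trees $T$ with empty body $[T]$).

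For coanalyticity I would invoke the Cantor--Bendixson theorem: a compact metrizable space is countable if and only if it contains no non-empty perfect subset. Hence $K\in K(\I)$ is uncountable exactly when there is a continuous injection $f\colon 2^{\N}\to\I$ of the Cantor space with $f(2^{\N})\subseteq K$. The set $\{(K,f)\in K(\I)\times C(2^{\N},\I): f\text{ is injective and }f(2^{\N})\subseteq K\}$ is Borel, since injectivity is a countable condition over a countable basis and the inclusion $f(2^{\N})\subseteq K$ is closed in the Hausdorff topology. Projecting to the first coordinate shows that the uncountable compact sets form an analytic set, so its complement $K_\omega(\I)$ is coanalytic.

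For the reduction I would use a Cantor scheme: fix closed subintervals $(I_s)_{s}$ of $\I$ indexed by finite sequences over $\N\times\{0,1\}$, with the children of each node placed in pairwise disjoint subintervals of the parent and with diameters tending to $0$ along every branch. Then $\gamma\mapsto x_\gamma:=\bigcap_k I_{\gamma|k}$ is a continuous injection of the body of any tree on $\N\times\{0,1\}$ into $\I$. Given a tree $R$ on $\N$ to be tested for well-foundedness, I would pass to the tree $S_R$ on $\N\times\{0,1\}$ declared by putting an interleaved pair $(u,v)$ in $S_R$ precisely when $u\in R$, so that $[S_R]=[R]\times 2^{\N}$. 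Setting
$$K_R=\{0\}\cup\bigcap_{k}\,\bigcup_{\substack{s\in S_R\\ |s|=k}} I_s=\{0\}\cup\{x_\gamma:\gamma\in[S_R]\},$$
with $0$ placed as an isolated point to guarantee $K_R\neq\emptyset$, yields a compact set (an intersection of compact sets together with one point). Since $\gamma\mapsto x_\gamma$ is injective, $K_R$ is countable if and only if $[S_R]=[R]\times 2^{\N}$ is countable, i.e.\ if and only if $[R]=\emptyset$, i.e.\ $R\in WF$. Thus $R\mapsto K_R$ is a reduction of $WF$ to $K_\omega(\I)$.

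The step I expect to be the main obstacle, and the reason the theorem is not formal, is the hardness direction: forcing $R\notin WF$ to produce an \emph{uncountable} $K_R$. The naive attempt of placing one point per node of $R$ fails, because a thin ill-founded tree with a single infinite branch would contribute only a convergent sequence and its limit, which is countable. The factor $2^{\N}$ above is precisely the fix: it thickens every branch of $R$ into a perfect set, so any single infinite branch already forces a Cantor set inside $K_R$, while a well-founded $R$ contributes nothing beyond the isolated point $0$. Once this is in place, checking that $[S_R]=[R]\times 2^{\N}$ and that $R\mapsto K_R$ is Borel (indeed continuous: agreement of $R$ up to level $m$ forces $K_R$ to agree up to the diameter of the level-$m$ intervals) is routine, and together with the coanalyticity established above this shows $K_\omega(\I)$ is $\Pi_1^1$-complete.
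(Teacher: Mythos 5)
The paper does not prove this statement at all: it cites it as Hurewicz's theorem with a pointer to \cite[Theorem 27.5]{K95}, so your attempt is measured against the classical proof. Your coanalyticity half is correct and standard (uncountability of a compact set is witnessed by a continuous injection of $2^{\N}$, injectivity being a countable conjunction of open disjointness conditions $f(N_s)\cap f(N_t)=\emptyset$ over clopen pieces), and your key device for hardness --- thickening each branch of $R$ by a factor $2^{\N}$ so that $[S_R]=[R]\times 2^{\N}$ and a single infinite branch already forces a Cantor set --- is exactly the mechanism of the classical argument, and you correctly identify why the naive one-point-per-node attempt fails.

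There is, however, a genuine gap in the reduction: your $K_R$ need not be compact, so $R\mapsto K_R$ does not land in $K(\I)$. Each node of $S_R$ has infinitely many children (indexed by $\N\times\{0,1\}$), so the level-$k$ union $\bigcup_{s\in S_R,\,|s|=k} I_s$ is an infinite union of pairwise disjoint closed intervals and is not closed; your parenthetical ``an intersection of compact sets'' is false. Concretely, if $R$ contains all branches $(n,n,n,\dots)$, the level-one intervals $I_{(n,i)}$ must accumulate at some $p\in I_\emptyset$, and picking $\gamma_n\in[S_R]$ beginning with $(n,0)$ gives $x_{\gamma_n}\to p$ with $p$ in general not of the form $x_\gamma$; thus $\{x_\gamma:\gamma\in[S_R]\}$ is not closed. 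The standard repair is to build the scheme so that the children intervals of $I_s$ accumulate only at an endpoint of $I_s$, and to replace your set by $\{0\}\cup\bigcap_k\overline{\bigcup_{s\in S_R,|s|=k} I_s}$: the closure then adds at most countably many endpoints, so compactness is restored and the equivalence ``$K_R$ countable iff $R\in WF$'' survives (for well-founded $R$ with $S_R$ infinite the closed intersection may be nonempty, but it is contained in the countable set of endpoints). Relatedly, your continuity claim is false: truncating an ill-founded $R$ at level $m$ produces trees agreeing with $R$ up to level $m$ whose associated set is just $\{0\}$, while $K_R$ contains a Cantor set far from $0$. This is harmless for the theorem, since only Borelness of the reduction is needed, and the corrected map is Borel: for fixed $k$ the closed level-$k$ union depends Borel-measurably on $R$, and the nested intersection is the $d_H$-limit of these compacta. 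With these two repairs your argument becomes a complete and correct proof along the classical lines.
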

 }

\begin{prop}\label{cor:cpmd iff A countable}
  There exists a continuous map   $\psi\colon K(\I)\rightarrow \S(\I)$ such that $\psi(B)$ has sofic CPMD if and only if $B$ is countable.
\end{prop}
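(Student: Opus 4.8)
The plan is to realize the reduction through a subshift built from the complementary intervals of $B$. For $x\in\I^G$ put $m(x)=\inf_{g\in G}x_g$ and $M(x)=\sup_{g\in G}x_g$, and define
$$\psi(B)=\{x\in\I^G:\ (m(x),M(x))\cap B=\emptyset\}.$$
Writing the complement as $\I\setminus B=\bigsqcup_n (a_n,b_n)$, a short check shows that $\psi(B)=\{\vec t:\ t\in\I\}\cup\bigcup_n[a_n,b_n]^G$, the union of the diagonal of constant sequences with the full shifts over the path-connected gap closures $[a_n,b_n]$. First I would verify $\psi(B)\in\S(\I)$: it is shift-invariant because the shift preserves $m(x)$ and $M(x)$, nonempty because it contains the diagonal, and closed because a limit $x$ of points $x_k\in\psi(B)$ cannot straddle a point $\beta\in B\cap(m(x),M(x))$, as otherwise two fixed coordinates of the $x_k$ would eventually lie on opposite sides of $\beta$. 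The same straddling idea gives continuity of $\psi$ for the Hausdorff metrics. If $B_k\to B$, $x_k\in\psi(B_k)$ and $x_k\to x$, then any $\beta\in B\cap(m(x),M(x))$ is approximated by $\beta_k\in B_k$ which eventually lies strictly between two fixed coordinates of $x_k$, contradicting $x_k\in\psi(B_k)$; so $x\in\psi(B)$. Conversely, since $(m(x),M(x))$ is disjoint from the closed set $B$, for each $\delta>0$ the sets $B_k$ eventually miss $[m(x)+\delta,M(x)-\delta]$, and clamping $x$ into $[m(x)+\delta_k,M(x)-\delta_k]$ for a suitable $\delta_k\to0$ produces $x_k\in\psi(B_k)$ with $x_k\to x$. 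Together these yield $\psi(B_k)\to\psi(B)$.

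The core is to pin down the zero sofic mean dimension factors of $X:=\psi(B)$. By Theorem \ref{thm:Bernoulli_CPMD} each cube $[a_n,b_n]^G$ has sofic CPMD. Hence for any factor $\pi\colon X\to Z$ with $\mdim_\Sigma(Z)\le0$, the subaction $\pi([a_n,b_n]^G)\subseteq Z$ has non-positive sofic mean dimension by monotonicity under subactions (as in the proof of Lemma \ref{lem:subaction pairs}), so, being a factor of a CPMD system, it is a single point: $\pi$ collapses every cube. Thus every such $\pi$ factors through the quotient $\bar X:=X/R$, where $R$ is the smallest closed shift-invariant equivalence relation containing the sets $[a_n,b_n]^G\times[a_n,b_n]^G$ (Proposition \ref{prop:factor-eq_rel}); since each cube and each $\vec t$ is shift-invariant, $\bar X$ carries the trivial action and $\mdim_\Sigma(\bar X)=0$ by Proposition \ref{thm:trivial_mdim_0}. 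Consequently $\bar X$ is a zero sofic mean dimension factor through which all of them factor, and $X$ has sofic CPMD if and only if $\bar X$ is a single point. I would then identify $\bar X$ with the quotient $Q_B$ of $\I$ collapsing each closure $[a_n,b_n]$ to a point, via the continuous shift-invariant factor $\Phi(x)=q(x_{e_G})$, which is well defined because all coordinates of a point of $X$ share the same value under the collapsing map $q\colon\I\to Q_B$.

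It remains to detect countability of $B$. The space $\bar X\cong Q_B$ is connected, being a continuous image of the connected set $X=\{\vec t\}\cup\bigcup_n[a_n,b_n]^G$, and a connected metric space with more than one point is uncountable, since the distance to a fixed point maps it onto a nondegenerate interval. If $B$ is countable then $Q_B=q(B)$ is countable, hence, being connected, a single point, so $X$ has sofic CPMD. If $B$ is uncountable it contains a Cantor set $C$; for $s<t$ in $C$ with $(s,t)\cap C\neq\emptyset$ the separating point of $C$ is a non-isolated point of $B$, which breaks any chain of gaps joining $s$ to $t$ and forces $q(s)\neq q(t)$. Thus $q$ is injective off the countable set of endpoints of the gaps of $C$, so $Q_B$ is uncountable, $\bar X$ is a non-trivial zero mean dimension factor, and $X$ does not have sofic CPMD. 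This proves $\psi(B)$ has sofic CPMD if and only if $B$ is countable.

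The step I expect to be the main obstacle is the precise identification $\bar X\cong Q_B$: one must verify that collapsing the infinite-dimensional cubes yields exactly the one-dimensional quotient governed by the gap structure of $B$, i.e.\ that the transitive and topological closure defining $R$ identifies two constant sequences $\vec s,\vec t$ precisely when $q(s)=q(t)$, neither more nor less (the subtle direction being that limits and shared gap-endpoints do not over-identify). The connectedness of $X$ is what converts the countable/uncountable dichotomy for $B$ into the trivial/non-trivial dichotomy for $\bar X$, and the straddling estimates, while routine, are what make both closedness and continuity of $\psi$ go through.
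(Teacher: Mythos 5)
Your $\psi$ is in fact literally the paper's $\psi$: since every constant sequence $\vec t$ with $t\notin B$ already lies in the cube over the gap containing $t$, your set $\{x:(m(x),M(x))\cap B=\emptyset\}$ coincides with the union of the constants $b^G$, $b\in B$, and the cubes $\overline{J}^G$, $J$ a contiguous interval, which is the paper's definition; your continuity and closedness sketches are the same straddling estimates the paper carries out explicitly. Your countable direction is sound and is essentially the paper's argument repackaged: a factor with non-positive sofic mean dimension collapses each cube (by Theorem \ref{thm:Bernoulli_CPMD} plus monotonicity of $\mdim_\Sigma$ under subactions, for which the right citation is \cite[Proposition 2.11]{L12} rather than Lemma \ref{lem:subaction pairs}), hence is a continuous image of $\I$ constant on gap closures, hence countable, connected and a point. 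Moreover, the identification $\bar X\cong Q_B$ that you flag as the main obstacle is actually routine, \emph{provided} $Q_B$ is the quotient of $\I$ by the smallest \emph{closed} equivalence relation collapsing the gap closures (which also gives metrizability): $\Phi(x)=q(x_{e_G})$ is constant on cubes, so it factors through $\bar X$, while $t\mapsto[\vec t\,]_R$ has closed fiber relation containing the generators, so it factors through $Q_B$, and the two induced maps are mutually inverse. No explicit computation of $R$ is needed.

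The genuine gap is in the uncountable direction. The step ``the separating point of $C$ is a non-isolated point of $B$, which breaks any chain of gaps joining $s$ to $t$ and forces $q(s)\neq q(t)$'' is false: since $q$ is the quotient by the smallest \emph{closed} equivalence relation, the topological-closure steps can merge classes across a single non-isolated point even when no finite chain of gap closures connects $s$ to $t$. Your own countable case witnesses this: for $B=\{0,1,\tfrac12\}\cup\{\tfrac12\pm\tfrac1n:n\geq 3\}$, the point $\tfrac12$ is non-isolated in $B$ and breaks every chain of gaps from $0$ to $1$, yet $Q_B$ is a single point (as it must be, $B$ being countable): the transitive closure has classes $[0,\tfrac12)$, $\{\tfrac12\}$, $(\tfrac12,1]$, and taking the closure of that relation merges all three. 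So ``no chain of gaps'' does not imply $q(s)\neq q(t)$, and your criterion would contradict your own countable case. The correct repair is exactly the paper's device: for a Cantor set $K\subseteq B$, the Devil's-staircase function $f(t)=\int_0^t d\mu_K$ is continuous, non-constant, and constant on every gap closure of $B$; its fiber relation is a closed equivalence relation containing the generators, so $f$ factors through $q$ and $Q_B$ has at least continuum many points (equivalently, as in the paper, $x\mapsto f(x_{e_G})$ is a non-trivial factor of $\psi(B)$ onto $\I$ with the trivial action, which has zero sofic mean dimension by Proposition \ref{thm:trivial_mdim_0}). With that substitution your quotient-based argument goes through; what it buys over the paper's proof is a single object $\bar X\cong Q_B$ whose triviality simultaneously encodes both directions of the dichotomy.
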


\begin{proof}

For each $B \in K(\I)$, we let $C(B)$ be the collection of all intervals contiguous to $B$, i.e, the collection of maximal connected components of $\I\setminus B$.

For $b\in B$ let $b^G$ be the point $x\in \I^{G}$ such that $x_g=b$ for all $g\in G$.

For $J\in C(B)$ we define
\[
X_J=\{x\in \I^{G}: x_g\in \overline{J} \text{ }\forall g\in G
\},
\]

and $\psi:K(\I)\rightarrow \S(\I)$ as
\[
\psi(B)=
\bigcup_{b\in B}\{b^G\}
 \cup
\bigcup_{J\in C(B)} X_J.
\]

It is not hard to see that $\psi(B)\in \S(\I)$.

We will now show that $\psi$ is continuous. We will denote by $d$ both a fixed compatible metric on $\I$ as well as on $\I^G$. Let $\epsilon>0$ and $\delta>0$ so that if for every $g\in G$ $d(w_g,z_g)<\delta$, $w_g,z_g\in I$, then $d((w_g)_{g\in G},(z_g)_{g\in G})<\epsilon$. Now fix $A,B\in K(\I)$ so that $d_H(A,B)<\frac{\delta}{4}$. Let $x\in \psi(A)$. We will show there exists $y\in \psi(B)$ so that $d(x,y)<\epsilon$. As one can exchange the roles of $A$ and $B$, this will imply $d_H(\psi(A),\psi(B))<\epsilon$. First assume $x=a^G$, $a\in A$. As $d_H(A,B)<\delta$, one may  choose $b\in B$, so that $d(a,b)<\delta$. Denote $y:=b^G\in \psi(B)$ and conclude $d(x,y)<\epsilon$ as desired. 
Now assume $x=(x_g)_{g\in G}\in \psi(A)\setminus \bigcup_{a\in A}\{a^G\}$. Thus there exists $J\in C(A)$ so that $x\in X_J$. Note $J=(c,d)$ for some $0<c<d<1$,  or $J=[0,d)$ for some $0<d<1$ or $J=(c,1]$ for some $0<c<1$. In those cases denote respectively $t_J=c$,  $t_J=d$ and  $t_J=c$. Clearly $t_J\in A$.   If $\leb(J)<\frac{3\delta}{4}$,  choose $b\in B$, so that $d(t_J,b)<\frac{\delta}{4}$ and notice that for every $g\in G$, $d(x_g,b)<\delta$. This implies for  
 $y:=b^G\in \psi(B)$, $d(x,y)<\epsilon$ as desired. If $\leb(J)\geq\frac{3\delta}{4}$ then, as $d_H(A,B)<\frac{\delta}{4}$, there exists $J'\in C(B)$ so that defining 
 $$\overline{B}_{-\frac{\delta}{4}}(J):=\{t\in J|\, \forall s\in \I\setminus J,\,\, |s-t|\geq\frac{\delta}{4}\},$$
it holds $\emptyset\neq \overline{B}_{-\frac{\delta}{4}}(J)\subset J'$. Thus one may choose for every $g\in G$, $y_g\in J'$, so that $d(x_g,y_g)<\delta$. Let $y:=(y_g)_{g\in G}$. Conclude  $d(x,y)<\epsilon$ as desired.

Now we will show that if $B$ is countable, then every non-trivial factor of $\psi(B)$ has positive mean dimension. Let $$\pi:G\curvearrowright \psi(B)\rightarrow G\curvearrowright Z.$$
If $\pi|_{X_J}$ is non-trivial for some $J \in C(B)$, then by Theorem \ref{thm:Bernoulli_CPMD}, $G\curvearrowright \pi(X_J)$  has positive mean dimension. As $G\curvearrowright \pi(X_J)$ is a subaction of $G\curvearrowright Z$, by \cite[Proposition 2.11]{L12} $G\curvearrowright Z$ has positive mean dimension.  Thus, one may assume that for all $J \in C(B)$, there exists $y_{J}\in Z$ such that $\pi(X_J)=y_{J}\in Z$. In particular $\pi(x^G)=y_{J}\in Z$ for all $x\in \overline{J}$.
Thus $\pi:G\curvearrowright \psi(B)\rightarrow G\curvearrowright Z$ induces a continuous map $f:\I\rightarrow Z$ with $f(a)=\pi(a^{G})$ for all $a\in B$ and $f(b)=y_J$ for all $b\in \overline{J}$. Note that $f$ has a countable image. This implies that there exists $p\in Z$ so that $f(\I)=\{p\}$ and as a consequence $\pi(\psi(B))=\{p\}$ as desired. Indeed by Urysohn's lemma a countable connected metric space is a singleton. Alternatively embedding $Z$ into the Hilbert cube, $\H$, we may assume, without loss of generality, that $f$ is a map into $\H$.  As $f$ is continuous, $f$ must be constant by the intermediate value theorem used on the interval factors of the Hilbert cube.

Now assume that $B$ is not countable. By the Cantor-Bendixson theorem, \cite[Theorem 6.4]{K95}, $B$ contains a set $K$ which is homeomorphic to the Cantor (ternary) set $\mathrm{C}$. Using the standard map $\pi\colon \{0,1\}^{\N}\rightarrow \mathrm{C}$, one equips $\mathrm{C}$ with the measure $\mu=\pi_*(\ber_{\frac{1}{2},\frac{1}{2}})$, where $\ber_{\frac{1}{2},\frac{1}{2}}$ is the $(\frac{1}{2},\frac{1}{2})$-Bernoulli measure. Note that $\mu$ is an atomless probability measure. Let $\mu_K$ be the corresponding measure (through the homeomorphism) on $K$. Let $f\colon \I\rightarrow \I$ be the function given by $f(t)=\int_0^t d\mu_K$. This function corresponds to the classical \textit{Cantor (or Devil’s staircase) function} associated to $K$. It is continuous, non-constant and for every open interval $Q$ in the complement of $B$, $f|_{Q}=\const$.  We now define a non-trivial factor, $\pi:G\curvearrowright \psi(B) \rightarrow G\curvearrowright \I$ where $\pi((x_g)_{g\in G})=f(x_e)$ and $G\curvearrowright \I$ acts as the identity. By Proposition \ref{thm:trivial_mdim_0}, $\mdim_\Sigma (G\curvearrowright \I)=0$.

\end{proof}

\begin{thm}
\label{thm:cc}
The set $\SC(\I)$ is a complete coanalytic subset of $\S(\I)$. 
\end{thm}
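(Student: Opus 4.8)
The plan is to show that $\SC(\I)$ is complete coanalytic by exhibiting a Borel reduction from a known complete coanalytic set, and then invoking Proposition \ref{prop:coanalyticbasic}. We already know from the previous theorem that $\SC(\I)$ is coanalytic, so it remains to produce the reduction. The natural candidate for the source set is the family of countable compact subsets of $\I$, which is complete coanalytic by Hurewicz's theorem (Theorem \ref{hurewicz}), and the natural candidate for the reduction map is precisely the function $\psi\colon K(\I)\rightarrow \S(\I)$ constructed in Proposition \ref{cor:cpmd iff A countable}.

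\begin{proof}
Let $\mathcal{B}=\{B\in K(\I): B \text{ is countable}\}$. By Theorem \ref{hurewicz}, $\mathcal{B}$ is a complete coanalytic subset of the Polish space $K(\I)$. Let $\psi\colon K(\I)\rightarrow \S(\I)$ be the map constructed in Proposition \ref{cor:cpmd iff A countable}. That proposition establishes two facts: first, that $\psi$ is continuous (hence Borel); and second, that for every $B\in K(\I)$, the shift space $\psi(B)$ has sofic CPMD if and only if $B$ is countable. The second fact is exactly the statement
$$
\psi^{-1}(\SC(\I)) = \{B\in K(\I): \psi(B)\text{ has sofic CPMD}\} = \mathcal{B}.
$$
Thus $\psi$ is a Borel function witnessing that $\mathcal{B}$ is Borel reducible to $\SC(\I)$. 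By the previous theorem, $\SC(\I)$ is a coanalytic subset of the Polish space $\S(\I)$. Applying Proposition \ref{prop:coanalyticbasic} with $\mathcal{A}=\SC(\I)$, $\mathcal{X}=\S(\I)$, $\mathcal{Y}=K(\I)$, and $f=\psi$, we conclude that $\SC(\I)$ is complete coanalytic.
\end{proof}

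The entire content of this theorem has already been distributed across the two preceding results, so the proof is essentially a bookkeeping assembly: coanalyticity is inherited from the earlier structural theorem on $\mathcal{F}_0$, and completeness is transported from Hurewicz's set through the reduction $\psi$. The only step that carries real mathematical weight is the correctness of the reduction, namely the equivalence ``$\psi(B)$ has CPMD $\iff$ $B$ is countable,'' but that equivalence is precisely the assertion of Proposition \ref{cor:cpmd iff A countable} and is therefore available to us. In particular, there is no genuine obstacle remaining at this stage; the substantive difficulties (the continuity of $\psi$, the use of Theorem \ref{thm:Bernoulli_CPMD} to handle nontrivial restrictions to the interval-blocks $X_J$, and the Devil's-staircase construction collapsing the uncountable case to a zero-mean-dimension identity factor) have all been dispatched in the construction of $\psi$.
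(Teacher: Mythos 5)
Your proposal is correct and follows exactly the paper's argument: the paper proves Theorem \ref{thm:cc} by citing Theorem \ref{hurewicz}, Proposition \ref{prop:coanalyticbasic} and Proposition \ref{cor:cpmd iff A countable}, which is precisely your reduction of the countable compact subsets of $\I$ to $\SC(\I)$ via the Borel (indeed continuous) map $\psi$. Your write-up merely makes explicit the bookkeeping the paper leaves implicit, including the needed coanalyticity of $\SC(\I)$ from the preceding theorem.
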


\begin{proof}
    Use Theorem~\ref{hurewicz}, Proposition~\ref{prop:coanalyticbasic} and Proposition~\ref{cor:cpmd iff A countable}. 
\end{proof}

\begin{thm}
\label{thm:ccq}
The set $\S_{c+}(\H)$ is a complete coanalytic subset of $\S(\H)$.
\end{thm}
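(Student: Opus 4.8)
The plan is to deduce the result from Theorem~\ref{thm:cc} by transporting the complete coanalytic set $\SC(\I)$ into $\S(\H)$ through the canonical embedding of the interval into the Hilbert cube, and then invoking the Borel reducibility criterion of Proposition~\ref{prop:coanalyticbasic}.

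First I would fix the embedding $\iota\colon \I\hookrightarrow \H$ sending $t\mapsto (t,0,0,\ldots)$; since $\I$ is compact and $\H$ is Hausdorff, $\iota$ is a homeomorphism onto its image. Applying $\iota$ coordinatewise yields a map $\iota^G\colon \I^G\to \H^G$ that is continuous, injective and $G$-equivariant for the two shift actions (one checks $(\iota^G(gx))_h=\iota(x_{g^{-1}h})=(g\,\iota^G(x))_h$), hence a $G$-equivariant homeomorphism onto its image, which is closed and shift-invariant in $\H^G$. Consequently the assignment $\Phi\colon \S(\I)\to \S(\H)$, $\Phi(X)=\iota^G(X)$, is well defined. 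I would then check that $\Phi$ is continuous for the Hausdorff metrics, which follows from the standard fact that a continuous map between compact metric spaces induces a continuous map on hyperspaces $K(\I^G)\to K(\H^G)$, $C\mapsto \iota^G(C)$; in particular $\Phi$ is Borel.

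Next I would argue that $\Phi$ is a Borel reduction of $\SC(\I)$ to $\SC(\H)$. For each $X\in \S(\I)$ the restriction $\iota^G|_X\colon X\to \Phi(X)$ is a $G$-equivariant homeomorphism, so $(X,\shift)$ and $(\Phi(X),\shift)$ are conjugate. Sofic CPMD is a conjugacy invariant: factors of conjugate systems correspond bijectively via composition with the conjugacy, corresponding factors are themselves conjugate and hence carry equal sofic mean dimension by Remark~\ref{mdim_conj}, and triviality of factors is preserved. Therefore $\Phi(X)$ has sofic CPMD if and only if $X$ does, i.e.\ $\Phi^{-1}(\SC(\H))=\SC(\I)$.

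Finally, $\SC(\H)$ is coanalytic by the general coanalyticity theorem applied with $A=\H$, the set $\SC(\I)$ is complete coanalytic by Theorem~\ref{thm:cc}, and $\Phi$ is a Borel map satisfying $\Phi^{-1}(\SC(\H))=\SC(\I)$; Proposition~\ref{prop:coanalyticbasic} then yields that $\SC(\H)$ is complete coanalytic. The only steps needing care --- and the mildest obstacles --- are the Hausdorff-continuity of $\Phi$ and the verification that sofic CPMD transfers across the conjugacy $\iota^G|_X$, both of which are routine given the machinery already assembled. (Alternatively one could compose $\Phi$ with the reduction $\psi$ of Proposition~\ref{cor:cpmd iff A countable} and appeal directly to Hurewicz's Theorem~\ref{hurewicz}, but routing through Theorem~\ref{thm:cc} is shorter.)
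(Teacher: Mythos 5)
Your proof is correct and takes essentially the same approach as the paper: the paper likewise transports subshifts of $\I^G$ into $\S(\H)$ via a coordinatewise embedding of $\I$ into $\H$ (the diagonal $x\mapsto(x,x,\ldots)$ rather than your $t\mapsto(t,0,0,\ldots)$), uses that sofic CPMD is invariant under the resulting conjugacy, and concludes with Proposition \ref{prop:coanalyticbasic}. The only cosmetic difference is the routing: the paper composes the induced map with $\psi$ from Proposition \ref{cor:cpmd iff A countable} and reduces Hurewicz's set of countable compacta directly --- precisely the alternative you mention in your closing parenthesis --- whereas you factor through Theorem \ref{thm:cc}.
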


\begin{proof}
    Consider
the diagonal map $\triangle\colon \I \hookrightarrow \H$, $x\mapsto (x, x, ,\ldots)$. This induces a function
$\tilde{\triangle}:S(\I) \hookrightarrow  S (\H)$ given by $A\mapsto \triangle(A)$. 

Note that $G\curvearrowright A$ is conjugate to $G\curvearrowright \triangle(A)$. Hence, $G\curvearrowright A$ has sofic CPMD if and only if  $G\curvearrowright \triangle(A)$ has sofic CPMD.

Let $\psi\colon K(\I)\rightarrow \S(\mathrm{\I})$ be the  function in the statement of Proposition \ref{cor:cpmd iff A countable}. Define $\psi_2=\tilde{\triangle}\circ \psi    \colon K(\I)\rightarrow \S(\mathrm{\H})$ such that $\psi_2(B)$ has sofic CPMD if and only if $B$ is countable.
As in the proof of Theorem \ref{thm:ccq}, by Theorem~\ref{hurewicz} and Proposition~\ref{prop:coanalyticbasic} we conclude that $\S_{c+}(Q)$ is  a complete coanalytic subset. 
\end{proof}

In \cite{darjilocal}, the authors proved that the family of interval maps with completely positive entropy is complete coanalytic. While some ideas of the proof are shared, there are some clear differences. The Polish spaces where the results are obtained are distinct (in each situation the most natural is used). In this paper, we use a space of shift spaces equipped with the Hausdorff metric that compares them as subsets. In the other paper, the topology of the Polish space is given by the sup-norm for continuous functions on a fixed space. Furthermore, the examples in \cite{darjilocal} have finite entropy and hence zero mean dimension.

Connections between topological entropy (and entropy pairs) and independence or interpolation have appeared in several papers \cite{huang2006local}\cite{kerr2009combinatorial}. It is thus natural to wonder if positive mean dimension is related to some notion of interpolation or independence.

\bibliographystyle{alpha}
\bibliography{universal_bib}

\Addresses

\end{document}